\newtheorem{theorem}{Theorem}[section]
\newtheorem{remark}[theorem]{Remark}
\newtheorem{lemma}[theorem]{Lemma}
\newtheorem{corollary}[theorem]{Corollary}
\newtheorem{proposition}[theorem]{Proposition}
\newtheorem{definition}[theorem]{Definition}
\newtheorem{example}[theorem]{Example}
\newtheorem{claim}[theorem]{Claim}
\newcommand{\rank}{\mathrm{rank}}
\newcommand{\T}{\mathrm{T}}
\newcommand{\lap}{\pmb{\Delta}}
\newcommand{\eps}{\varepsilon}
\newcommand{\R}{\mathbb{R}}
\newcommand{\lc}{\left(}
\newcommand{\rc}{\right)}
\title{Persistent Laplacians: properties, algorithms and implications} 
\author[1]{Facundo M\'emoli\thanks{\url{memoli@math.osu.edu}}}
\author[2]{Zhengchao Wan\thanks{\url{wan.252@osu.edu}}}
\author[3]{Yusu Wang\thanks{\url{yusuwang@ucsd.edu}}}
\affil[1]{Department of Mathematics and Department of Computer Science and Engineering, The Ohio State University, Columbus, OH, USA}
\affil[2]{Department of Mathematics, The Ohio State University, Columbus, OH, USA}
\affil[3]{Hal{\i}c{\i}o\u{g}lu Data Science Institute, University of California San Diego, San Diego, CA, USA}
\date{}
\begin{document}

\maketitle

\begin{abstract}
We present a thorough study of the theoretical properties and devise efficient algorithms for the \emph{persistent Laplacian}, an extension of the standard combinatorial Laplacian to the setting of pairs (or, in more generality, sequences) of simplicial complexes $K \hookrightarrow L$,  which was independently introduced by Lieutier et al. and by Wang et al. In particular, in analogy with the non-persistent case, we first prove that the nullity of the $q$-th persistent Laplacian $\Delta_q^{K,L}$ equals the $q$-th persistent Betti number of the inclusion $(K \hookrightarrow L)$. We then present an initial algorithm for finding a matrix representation of $\Delta_q^{K,L}$, which itself helps interpret the persistent Laplacian.  We exhibit a novel relationship between the persistent Laplacian and the notion of Schur complement of a matrix which has several important implications. In the graph case, it both uncovers a link with the notion of effective resistance and leads to a persistent version of the Cheeger inequality. This relationship also yields an additional, very simple algorithm for finding (a matrix representation of) the $q$-th persistent Laplacian which in turn leads to a novel and fundamentally different algorithm for computing the $q$-th persistent Betti number for a pair $(K,L)$ which can be significantly more efficient than standard algorithms. 
Finally, we study persistent Laplacians for simplicial filtrations and present novel stability results for their eigenvalues. Our work brings methods from spectral graph theory, circuit theory, and persistent homology together with a topological view of the combinatorial Laplacian on simplicial complexes.\end{abstract}

\newpage

\tableofcontents

\section{Introduction} 

The combinatorial graph Laplacian, as an operator on functions defined on the vertex set of a graph, is a fundamental object in the analysis of and optimization on graphs. 
Its spectral properties are  widely used in graph optimization problems (e.g, spectral clustering \cite{chung1997sgt,LeeGT12,ng2002sca,uvon}) and in the efficient solution of systems of equations, cf. \cite{KMP12,livne2012lean,spielman2004nearly,Vishnoi13}. The graph Laplacian  is also connected to network circuit theory via the notion of \emph{effective resistance} \cite{Bollobas98,dorfler2012kron,lyons2017probability,spielman2011graph}. 

There is also an algebraic topology view of the graph Laplacian which arises through considering boundary operators and specific inner products defined on simplicial (co)chain groups \cite{chung1997sgt}.
This permits extending the graph Laplacian to a more general operator, the $q$-th combinatorial Laplacian $\Delta_q^K$ on {the $q$-th (co)chain groups of} a given simplicial complex $K$ (see e.g., \cite{eckmann1944harmonische,duval2002shifted,goldberg2002combinatorial,horak2013spectra}), so that the standard graph Laplacian simply corresponds to the 0-th case. 
These ideas connect to the topology of the input simplicial complex via the so called \emph{combinatorial Hodge Theorem} \cite{eckmann1944harmonische}, which states that the nullity of the $q$-th combinatorial Laplacian is equal to the rank of the $q$-th cohomology group of $K$ with real coefficients, i.e. the $q$-th Betti number of $K$. See also \cite{horak2013spectra,lim2020hodge} for thorough expositions.

The combinatorial Laplacian (and  variants) have received a great deal of attention in recent years; see e.g. \cite{goldberg2002combinatorial,GundertS14,gundert2015higher,MW09}. For example,  \cite{kook2018simplicial}  aims to extend the related concept, effective resistance from network circuit theory, to this ``high dimensional" situation, whereas \cite{hansen2019toward,hansen2020laplacians} consider a spectral theory of cellular sheaves with applications to sparsification and synchronization problems.

Adopting the algebraic topology view of the $q$-th combinatorial Laplacian, \cite{lapslides} and \cite{wang2020persistent} independently introduced the so-called $q$-th \emph{persistent Laplacian $\Delta_q^{K,L}$}, which is an extension of the combinatorial Laplacian mentioned above  to \emph{a pair} of simplicial complexes $K \hookrightarrow L$ connected by an inclusion.
To the best of our knowledge, \cite{lapslides} and \cite{wang2020persistent} are the first works which establishes a link between \emph{persistent} homology \cite{edelsbrunner2000topological,zomorodian2005computing}, one of the most important developments in the field of applied and computational topology in the past two decades, with the Laplacian, a common and fundamental object with a vast literature, both in the theoretical and applied domains. See also \cite{de2011persistent,perea2018multiscale} for other work in computational topology which leverages ideas connected to the (standard) combinatorial Laplacian.

It is thus natural and also highly desirable to achieve better understanding, as well as  algorithmic developments, for this persistent Laplacian,  all of which will  help broaden its potential applications. The present paper aims to close this gap. 

\paragraph{Contributions} In this paper, we carry out a thorough study of the properties of and develop algorithms for the persistent Laplacian. Our work brings together ideas and methods from several communities, including spectral graph theory, circuit theory, topological treatments of high-dimensional combinatorial Laplacians, together with a persistent homology perspective (both at the theoretical and algorithmic levels). For instance, we relate the computation of persistent homology with notions from network theory such as the Kron reduction (and also Schur complements) which have novel algorithmic implications; see below.

This is an overview of our results: 
\begin{itemize}
    \item In \Cref{sec:pairs}, we present several 
    results about the 
    properties of the $q$-th persistent Laplacian $\Delta_q^{K,L}$, including \Cref{thm:pers-betti-pers-lap}, which establishes that the nullity of 
    $\Delta_q^{K,L}$ equals the $q$-th \emph{persistent} Betti number from $K$ to $L$: a result analogous to the one that holds in the non-persistent case. 
    \item In \Cref{sec:first algo persistent Laplacian}, we give a first algorithm (\Cref{algo-pers-lap}) to compute a matrix representation 
    of $\Delta_q^{K,L}$, which relies on  matrix reduction ideas which are standard when computing persistent homology. 
    
    \item In \Cref{sec:schur complement}, we establish our main observation \Cref{thm:persis-Laplacian-schur-formula}, a relationship between the persistent Laplacian and the concept of \emph{Schur complement} of a matrix. This observation has several immediate and important implications:
    \begin{enumerate}
        \item We establish a second, very simple algorithm {(\Cref{algo-pers-lap-schur})} which computes the matrix representation of the persistent Laplacian $\Delta_q^{K,L}$ (for any $q$) efficiently, purely based on a linear algebraic formulation (\Cref{thm:persis-Laplacian-schur-formula}). 
        \item This observation leads to a new algorithm to compute the $q$-th persistent Betti number for a pair of spaces in a fundamentally different manner from extant algorithms in the computational topology literature. This new algorithm is, under mild conditions (e.g. as those commonly satisfied by Vietoris-Rips complexes)   significantly more efficient than existing algorithms. We believe that this new algorithm for computing persistent Betti numbers is of independent interest.
        \item In the graph case (i.e. when $K$ and $L$ are graphs and $q=0$), this provides a direct connection with notions from network circuit theory such as the Kron reduction \cite{dorfler2012kron}, a connection which reveals that the matrix representation of the persistent Laplacian permits recovering the effective resistance of pairs of vertices in $K$ w.r.t the larger graph $L$ (cf. \Cref{prop:persistent graph is a graph} and \Cref{thm:effective-persistent-preserve}). The connection with network circuit theory leads to our definition of a ``persistent'' Cheeger constant as well as to a novel persistent Cheeger-like inequality for a pair of graphs $K\hookrightarrow L$ (cf. \Cref{sec:cheeger}). 
    \end{enumerate}
     
    \item Finally, in \Cref{sec:filtration}, we consider $q$-th persistent Laplacians for filtrations of simplicial complexes (connected by inclusion morphisms). 
    We first describe an efficient algorithm to \emph{iteratively} compute the persistent Laplacian for all all pairs of complexes in a filtration.  We then discuss certain spectral stability results for the persistent Laplacian for filtrations of simplicial complexes.
\end{itemize}
Some technical details are relegated to the appendix. 

%%%%%%%%%%%%%%%%%%%%%%%%%%%%%%%%%%%%%%%%%%%%%%%%%%%%%%%%%%
\section{The persistent Laplacian for simplicial pairs $K\hookrightarrow L$}  \label{sec:pairs}

In this section, after introducing some basic notions/definitions in \Cref{sec:basics}, we formulate the persistent Laplacian for simplicial pairs in \Cref{sec:definition of persistent laplacian} and present some basic properties of persistent Laplacians in \Cref{sec:basic properties of persistent Laplacian}.

\subsection{Basics}\label{sec:basics}
\paragraph{Simplicial complexes}An \emph{(abstract) simplicial complex} $K$ over a finite \emph{ordered} set $V$ is a collection of finite subsets of $V$ such that for any $\sigma\in K$, if $\tau\subseteq\sigma$, then $\tau\in K$.
Denote by $\mathbb N$ the set of non-negative integers. 
For each $q\in\mathbb{N}$,
an element $\sigma\in K$ is called a \emph{$q$-simplex} if $|\sigma|=q+1$, where we use $|A|$ to denote the cardinality of a set $A$. A $0$-simplex, usually denoted by $v$, is also called a \emph{vertex}. Denote by $S_q^K$ the set of $q$-simplices of $K$. Note that $S_0^K\subseteq V$. The \emph{dimension} of $K$, denoted by $\dim(K)$, is the largest $q$ such that $S_q^K\neq\emptyset$. A $1$-dim simplicial complex is also called a \emph{graph} and we often use $K=(V^K,E^K)$ to represent a graph, where $V^K\coloneqq S_0^K$ denotes the vertex set and $E^K\coloneqq S_1^K$ denotes the edge set.

An \emph{oriented} simplex, denoted by $[\sigma]$,
is a simplex $\sigma\in K$ with an ordering on its vertices. For simplicity of our presentation, we always assume that the ordering is inherited from the ordering of $V$. Let $\Bar{S}_q^K\coloneqq\left\{[\sigma]:\,\sigma\in S_q^K\right\}$. The $q$-th chain group $C_q^K\coloneqq C_q(K,\mathbb{R})$ of $K$ is the vector space over $\R$ with basis $\Bar{S}_q^K$. Let $n_q^K\coloneqq \dim C_q^K=\left|S_q^K\right|$. We define the boundary operator $\partial_q^K:C_q^K\rightarrow C_{q-1}^K$ by
\begin{equation}\label{eq:boundary map}
    \partial_q^K([v_0,\ldots,v_q])\coloneqq\sum_{i=0}^q(-1)^i[v_0,\ldots,\hat{v}_i,\ldots,v_q]
\end{equation}
for each $\sigma=[v_0,\ldots,v_q]\in \Bar{S}_q^K$, where $\hat{v}_i$ denotes the omission of the $i$-th vertex. The $q$-th homology group of $K$ is $H_q(K)=\frac{\ker\lc\partial_q^K\rc}{\mathrm{im}\lc\partial_{q+1}^K\rc}$ 
and $\beta_q^K\coloneqq\rank\lc H_q(K)\rc$ is its $q$-th Betti number.

A \emph{weight function} on a simplicial complex $K$ is any positive function $w^K:K\rightarrow(0,\infty)$. Throughout the paper, \textbf{each simplicial complex $K$ is (implicitly) endowed with a weight function $w^K$.} We call $K$ \emph{unweighted} if $w^K\equiv 1$.

\paragraph{Combinatorial Laplacian} Let $K$ be a simplicial complex with a weight function $w^K$. Given any $q\in\mathbb{N}$, let $w_q^K\coloneqq w^K|_{S_q^K}$ and define the inner product $\langle\cdot,\cdot\rangle_{w_q^K} $ on $C_q^K$ as follows:
\begin{equation}\label{eq:weight inner product}
    \langle [\sigma],[\sigma']\rangle_{w_q^K}\coloneqq\delta_{\sigma\sigma'}\cdot \lc w_q^K(\sigma)\rc^{-1},\,\,\forall\sigma,\sigma'\in S_q^K.
\end{equation} 

\begin{remark}
Consider the dual space of $C_q^K$: the cochain space $C^q(K)\coloneqq\mathrm{Hom}\lc C_q(K),\R\rc$. Then, $\langle\cdot,\cdot\rangle_{w_q^K}$ on $C_q(K)$ induces an inner product $\llangle\cdot,\cdot\rrangle_{{w}_q^K}$ on $C^q(K)$ such that 
\[\llangle f,g\rrangle_{{w}_q^K}=\sum_{\sigma\in S_q^K}w_q^K(\sigma)f([\sigma])g([\sigma]),\,\,\,\forall f,g\in C^q(K).\] 
This inner product $\llangle\cdot,\cdot\rrangle_{{w}_q^K}$ on $C^q(K)$ coincides with the one defined in \cite{horak2013spectra}, which explains the reciprocal in the definition \Cref{eq:weight inner product} of the inner product $\langle\cdot,\cdot\rangle_{w_q^K}$ on $C_q(K)$.
\end{remark}

We denote by $\lc\partial_q^K\rc^*:C_{q-1}^K\rightarrow C_{q}^K$ the adjoint of $\partial_q^K$ under these inner products. Then, we define the $q$-th \emph{(combinatorial) Laplacian}  $\Delta_q^K:C_q^K\rightarrow C_q^K$ as follows:
 \begin{equation}\label{eq:combinatorial Laplacian}
     \Delta_q^{K}\coloneqq \underbrace{\partial_{q+1}^{K}\circ \left(\partial_{q+1}^{K}\right)^\ast}_{\Delta^{K}_{q,\mathrm{up}}} + \underbrace{\left(\partial_q^K\right)^\ast\circ\partial_q^K}_{\Delta_{q,\mathrm{down}}^{K}},
 \end{equation}
where for convenience we have also defined the corresponding ``up'' and ``down'' Laplacians. By convention we let $\partial_0^K\coloneqq 0$ and thus $\Delta_0^{K}= \partial_{1}^{K}\circ \left(\partial_{1}^{K}\right)^\ast.$ When $K$ is a graph and $w_0^K\equiv 1$, $\Delta_0^{K}$ reduces to the graph Laplacian of the weighted graph $(K,w_1^K)$ \cite{chung1997sgt}. 

\begin{theorem}[\cite{eckmann1944harmonische}]\label{thm:ker lap = homology}
For each $q\in\mathbb N$, $\beta_q^K=\mathrm{nullity}\lc\Delta_q^K\rc$.
\end{theorem}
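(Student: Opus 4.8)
The plan is to establish the classical Hodge-theoretic decomposition of the chain space $C_q^K$ and then identify the harmonic subspace $\ker\Delta_q^K$ with $H_q(K;\R)$. First I would observe that since all operators are maps between finite-dimensional inner product spaces, we have the orthogonal decompositions $C_q^K = \mathrm{im}\,\partial_{q+1}^K \oplus \ker(\partial_{q+1}^K)^*$ and $C_q^K = \mathrm{im}(\partial_q^K)^* \oplus \ker\partial_q^K$, coming from the general fact that $\mathrm{im}(A)^\perp = \ker(A^*)$. The key structural input is that $\partial_q^K \circ \partial_{q+1}^K = 0$, which implies $\mathrm{im}\,\partial_{q+1}^K \subseteq \ker\partial_q^K$ and, dually, $\mathrm{im}(\partial_q^K)^* \subseteq \ker(\partial_{q+1}^K)^*$. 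Combining these, I would derive the three-term orthogonal (Hodge) decomposition
\begin{equation*}
C_q^K = \mathrm{im}\,\partial_{q+1}^K \;\oplus\; \mathcal H_q \;\oplus\; \mathrm{im}(\partial_q^K)^*,
\end{equation*}
where $\mathcal H_q \coloneqq \ker\partial_q^K \cap \ker(\partial_{q+1}^K)^*$ is the space of harmonic $q$-chains.

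Next I would show $\ker\Delta_q^K = \mathcal H_q$. The inclusion $\mathcal H_q \subseteq \ker\Delta_q^K$ is immediate from the definition \eqref{eq:combinatorial Laplacian}. For the reverse inclusion, given $c \in \ker\Delta_q^K$, I would pair $\Delta_q^K c$ with $c$: since $\langle \Delta_q^K c, c\rangle = \langle \partial_{q+1}^K (\partial_{q+1}^K)^* c, c\rangle + \langle (\partial_q^K)^* \partial_q^K c, c\rangle = \|(\partial_{q+1}^K)^* c\|^2 + \|\partial_q^K c\|^2 = 0$, both norms vanish, so $c \in \ker(\partial_{q+1}^K)^* \cap \ker\partial_q^K = \mathcal H_q$. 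Hence $\mathrm{nullity}(\Delta_q^K) = \dim\mathcal H_q$.

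Finally, I would identify $\dim\mathcal H_q$ with $\beta_q^K = \dim H_q(K) = \dim\bigl(\ker\partial_q^K / \mathrm{im}\,\partial_{q+1}^K\bigr)$. From the Hodge decomposition restricted to $\ker\partial_q^K$, namely $\ker\partial_q^K = \mathrm{im}\,\partial_{q+1}^K \oplus \mathcal H_q$ (using that $\mathrm{im}\,\partial_{q+1}^K \subseteq \ker\partial_q^K$ and that $\mathrm{im}(\partial_q^K)^*$ meets $\ker\partial_q^K$ trivially), the quotient $\ker\partial_q^K/\mathrm{im}\,\partial_{q+1}^K$ is isomorphic to $\mathcal H_q$, giving $\beta_q^K = \dim\mathcal H_q = \mathrm{nullity}(\Delta_q^K)$. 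I do not anticipate a genuine obstacle here — the argument is the standard finite-dimensional Hodge theory — but the one point requiring a little care is verifying the orthogonality of the three summands and that the intersections used ($\mathrm{im}(\partial_q^K)^* \cap \ker\partial_q^K = 0$, etc.) are indeed trivial, all of which follow cleanly from $\mathrm{im}(A)^\perp = \ker(A^*)$ together with $\partial_q^K\partial_{q+1}^K = 0$.
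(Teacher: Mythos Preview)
Your argument is correct and is exactly the standard finite-dimensional Hodge decomposition proof of Eckmann's theorem. Note, however, that the paper does not supply its own proof of this statement: it is quoted as a classical result with a citation to \cite{eckmann1944harmonische}. The paper does prove the persistent generalization (\Cref{thm:pers-betti-pers-lap}) via an equivalent linear-algebraic fact, namely that for matrices with $AB=0$ one has $\ker(A)/\mathrm{im}(B)\cong\ker(A)\cap\ker(B^\T)=\ker(BB^\T+A^\T A)$ (citing \cite{lim2020hodge}); your Hodge-decomposition argument is precisely the unpacking of that identity in the special case $K=L$.
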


\paragraph{Simplicial pairs and simplicial filtrations} A \emph{simplicial pair}, denoted $K\hookrightarrow L$, consists of any pair $K$ and $L$ of simplicial complexes over the \emph{same finite ordered set} $V$ such that $K\subseteq L$, i.e., {$S_q^K\subseteq S_q^L$ for all $q\in\mathbb{N}$,} and $w^K=w^L|_K$. A \emph{simplicial filtration} $\mathbf{K} = \{K_t\}_{t\in T}$ is a set of simplicial complexes over the same finite ordered set $V$ indexed by a subset $T\subseteq\R$ such that for all $s\leq t\in T$, $K_s\hookrightarrow K_t$ is a simplicial pair. For an integer $q\geq 0$ and for any $s\leq t\in T$, via functoriality of Homology \cite{hatcher2000algebraic} one obtains a map $f^{s,t}_q: H_q(K_s)\rightarrow H_q(K_t)$ and the $q$-th \emph{persistent homology groups} are defined as the images of these maps. The $q$-th \emph{persistent Betti numbers} $\beta_q^{s,t}$ of $\mathbf{K}$ are in turn defined as the ranks of these groups. Of course when one is just presented with a simplicial pair $K\hookrightarrow L$, for each $q$ one also obtains the analogously defined $q$-th persistent Betti number $\beta^{K,L}_q$.

%%%%%%%%%%%%%%%%%%%%%%%%%%%%%%%
\subsection{Definition of the persistent Laplacian}\label{sec:definition of persistent laplacian}
Suppose that we have a simplicial pair $K\hookrightarrow L$ and that $q\in \mathbb{N}$. Consider the subspace 
\[C_q^{L,K}\coloneqq\left\{c \in C^L_q\,:\,\partial_q^L(c) \in C_{q-1}^K\right\}\subseteq C_q^L\] consisting of those $q$-chains in $C_{q}^L$ such that their images under the boundary operator $\partial_{q}^L$ is in the subspace $C_{q-1}^K$ of $C_{q-1}^L$. Let $n_q^{L,K}\coloneqq\dim\lc C_q^{L,K}\rc$.

Now, for each $q$ let $\partial_q^{L,K}$ denote the restriction of $\partial_q^L$ to $C_q^{L,K}$ so that we obtain the ``diagonal'' operators $\partial_q^{L,K}:C_q^{L,K}\rightarrow C_{q-1}^K$. As we mentioned earlier, for each $q$ both $C^K_q$ and $C^L_q$ are endowed with inner products $\langle\cdot,\cdot\rangle_{w_q^K}$ and $\langle\cdot,\cdot\rangle_{w_q^L}$ so that we can consider the adjoints of  $\partial_{q+1}^{L,K}$ and $\partial_q^L$. See the diagram below for the construction where the blue arrows signal the important part of the diagram: 

\begin{tikzcd}
\centering
     C_{q+1}^K\arrow{rr}{\partial_{q+1}^K}\arrow[hookrightarrow,dashed,gray]{dd} && C_q^K\arrow[rr,shift left=.75ex,blue,"\partial_q^K"]\arrow[hookrightarrow,dashed,gray]{dd}\arrow[dl,shift left=.75ex,blue,"\left(\partial_{q+1}^{L,K}\right)^*"] && C_{q-1}^K\arrow[hookrightarrow,dashed,gray]{dd}\arrow[ll,shift left=.75ex,blue,"\left(\partial_q^K\right)^*"]\\
      &C_{q+1}^{L,K}\arrow[hookrightarrow,dashed,gray]{dl}\arrow[ur,shift left=.75ex,blue,"\partial_{q+1}^{L,K}"]&& \,\,\,\,\,\,\,\,\,\, & \\
        C_{q+1}^L\arrow{rr}{\partial_{q+1}^L}  && C_q^L\arrow{rr}{\partial_q^L} && C_{q-1}^L
\end{tikzcd}

\noindent One can then define the $q$-th \emph{persistent Laplacian} \cite{wang2020persistent} $\Delta_q^{K,L}:C_q^K\rightarrow C_q^K$ by:
 \begin{equation}\label{eq:persist-Laplacian-no-basis}
     \Delta_q^{K,L}\coloneqq \underbrace{\partial_{q+1}^{L,K}\circ \left(\partial_{q+1}^{L,K}\right)^\ast}_{\Delta^{K,L}_{q,\mathrm{up}}} + {\left(\partial_q^K\right)^\ast\circ\partial_q^K},
 \end{equation}
where we have also defined the $q$-th \emph{up} persistent Laplacian $\Delta^{K,L}_{q,\mathrm{up}}$ with the same domain/codomain as $\Delta_q^{K,L}$. When $q=0$, since $\partial_0^K= 0$, $\Delta_0^{K,L}= \partial_{1}^{K,L}\circ \left(\partial_{1}^{K,L}\right)^\ast=\Delta^{K,L}_{0,\mathrm{up}}.$

\begin{example}[Trivial cases]\label{ex:trivial cases}
\begin{enumerate}
    \item When $C_{q+1}^{L,K}=\{0\}$, $\partial_{q+1}^{L,K}=0$ and thus $\Delta^{K,L}_{q,\mathrm{up}}=0$.
    \item When $K=L$, then obviously $\Delta_q^{K,L}=\Delta_q^L$, the usual Laplacian on $L$. 
    \item If $S_q^K=S_q^L$, then $\Delta_{q,\mathrm{up}}^{K,L}=\Delta_{q,\mathrm{up}}^L$. In particular, if $S_0^K=S_0^L$, then $\Delta_0^{K,L}=\Delta_0^L$. If furthermore $S_{q-1}^K=S_{q-1}^L$, then $\Delta_{q,\mathrm{down}}^{K}=\Delta_{q,\mathrm{down}}^L$ and thus $\Delta_q^{K,L}=\Delta_q^L$. 
\end{enumerate}
\end{example}

Obviously, $\Delta_q^{K,L}$ is a self-adjoint, non-negative and compact operator on $C_q^K$ and thus has non-negative real eigenvalues. We denote by
$0\leq \lambda_{q,1}^{K,L}\leq \lambda_{q,2}^{K,L}\leq\ldots\leq \lambda_{q,n_q^K}^{K,L}$
the eigenvalues of $\Delta_q^{K,L}$ sorted in increasing order, including repetitions.

%%%%%%%%%%%%%%%%%%%%%%%%%%
\subsection{Basic properties of the persistent Laplacian}\label{sec:basic properties of persistent Laplacian}
We now show some basic properties of $\Delta_q^{K,L}$. All proofs are given in \Cref{sec:proofs from the paper}.

\begin{lemma}\label{lm:connected component}
Suppose $L$ has $n$ connected components $L_1,\ldots,L_n$. Suppose $K$ only intersects the first $m$ connected components. Let $K_i\coloneqq K\cap L_i$ for each $i=1,\ldots,m$. Then, $\Delta_q^{K,L}$ is the direct sum of persistent Laplacians $\Delta_q^{K_i,L_i}$ on $C_q^{K_i}$ for $i=1,\ldots,m$, i.e., $\Delta_q^{K,L}=\bigoplus_{i=1}^m\Delta_q^{K_i,L_i}.$
\end{lemma}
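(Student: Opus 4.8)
The plan is to show that the chain-level construction defining $\Delta_q^{K,L}$ respects the decomposition of $L$ into connected components, and then to identify the summands. First I would observe that since each simplex of $L$ lies in exactly one connected component $L_i$, the chain group splits orthogonally: $C_q^L = \bigoplus_{i=1}^n C_q^{L_i}$ with the $L_i$-blocks mutually orthogonal with respect to $\langle\cdot,\cdot\rangle_{C_q^L}$ (the basis $\bar S_q^L$ partitions into the $\bar S_q^{L_i}$). The same holds for $C_q^K$: a simplex of $K$ lies in some $K_i = K\cap L_i$ for $i \le m$, so $C_q^K = \bigoplus_{i=1}^m C_q^{K_i}$. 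Moreover the boundary operator $\partial_q^L$ is block-diagonal with respect to these decompositions, because the boundary of a simplex in $L_i$ is a chain supported on $L_i$; likewise $\partial_q^K$ is block-diagonal over the $K_i$.

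Next I would verify that the hybrid space $C_q^{L,K}$ also decomposes, namely $C_q^{L,K} = \bigoplus_{i=1}^m C_q^{L_i,K_i}$. The nontrivial point is that a chain $c \in C_q^L$ with $\partial_q^L(c) \in C_{q-1}^K$ need not, a priori, have each of its component pieces $c_i \in C_q^{L_i}$ satisfy $\partial_q^{L_i}(c_i) \in C_{q-1}^{K_i}$ — but this follows immediately from block-diagonality: $\partial_q^L(c) = \sum_i \partial_q^{L_i}(c_i)$ is an orthogonal sum, so it lies in $C_{q-1}^K = \bigoplus_i C_{q-1}^{K_i}$ iff each $\partial_q^{L_i}(c_i)$ lies in $C_{q-1}^{K_i}$. (For the components $L_j$ with $j > m$ that $K$ misses, $C_{q-1}^{K_j} = \{0\}$, forcing the corresponding $c_j$ to lie in the cycle space, which is still consistent with a direct sum; care is only needed to note these pieces contribute nothing once we project the boundary onto $C_{q-1}^K$, so they do not affect $C_q^{L,K}$ restricted to the relevant blocks — or more simply, one restricts attention to $i \le m$ throughout.) Consequently $\partial_{q+1}^{L,K}$ is block-diagonal with blocks $\partial_{q+1}^{L_i,K_i}$, and since adjoints of block-diagonal maps between orthogonally-decomposed spaces are block-diagonal, so is $\bigl(\partial_{q+1}^{L,K}\bigr)^*$.

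Finally I would assemble the conclusion: $\Delta_{q,\mathrm{up}}^{K,L} = \partial_{q+1}^{L,K}\circ\bigl(\partial_{q+1}^{L,K}\bigr)^*$ is a composition of block-diagonal operators, hence block-diagonal with $i$-th block $\partial_{q+1}^{L_i,K_i}\circ\bigl(\partial_{q+1}^{L_i,K_i}\bigr)^* = \Delta_{q,\mathrm{up}}^{K_i,L_i}$; similarly $\bigl(\partial_q^K\bigr)^*\circ\partial_q^K$ has $i$-th block $\bigl(\partial_q^{K_i}\bigr)^*\circ\partial_q^{K_i} = \Delta_{q,\mathrm{down}}^{K_i}$. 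Adding, $\Delta_q^{K,L} = \bigoplus_{i=1}^m \Delta_q^{K_i,L_i}$ as claimed. The main obstacle, and the only step requiring genuine care rather than bookkeeping, is the decomposition of $C_q^{L,K}$ together with the correct handling of the components $L_j$ ($m < j \le n$) that $K$ does not meet; everything else is a routine verification that the boundary maps and the inner products are simultaneously block-diagonal. I would state the orthogonal-decomposition observations as a short preliminary and then let the three operator identities follow in one line each.
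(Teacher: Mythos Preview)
Your proposal is correct and follows essentially the same route as the paper: the paper's proof simply records the decompositions $C_{q-1}^K=\bigoplus_{i=1}^m C_{q-1}^{K_i}$, $C_q^K=\bigoplus_{i=1}^m C_q^{K_i}$, $C_{q+1}^{L,K}=\bigoplus_{i=1}^m C_{q+1}^{L_i,K_i}$ and $\partial_q^K=\bigoplus_i \partial_q^{K_i}$, $\partial_{q+1}^{L,K}=\bigoplus_i \partial_{q+1}^{L_i,K_i}$, then declares the result immediate. If anything, you are more careful than the paper about the components $L_j$ with $j>m$: strictly speaking $C_{q+1}^{L,K}$ also contains the cycle spaces $\ker\partial_{q+1}^{L_j}$ for $j>m$, a point the paper silently suppresses, but as you note these pieces lie in $\ker\partial_{q+1}^{L,K}$ and hence contribute nothing to $\Delta_{q,\mathrm{up}}^{K,L}$.
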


Given a graph $K$, the multiplicity of the 0 eigenvalue of $\Delta_0^K$ coincides with the number of connected components of $K$ \cite{marsden2013eigenvalues}. The following result is a persistent version of this.

\begin{theorem}\label{thm:connected component}
The eigenvalues of $\Delta_0^{K,L}$ satisfy the following basic properties.
\begin{enumerate}
    \item $\lambda_{0,1}^{K,L}=0$; and if $L$ is connected, then $\lambda_{0,2}^{K,L}>0$.
    \item Let $m$ be the multiplicity of the $0$ eigenvalue of $\Delta_0^{K,L}$, then $K$ intersects exactly $m$ connected components of $L$.
\end{enumerate}
\end{theorem}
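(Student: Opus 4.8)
The plan is to leverage \Cref{lm:connected component} to reduce the analysis to the connected-component pieces. By that lemma, $\Delta_0^{K,L} = \bigoplus_{i=1}^m \Delta_0^{K_i,L_i}$, where $L_1,\dots,L_m$ are exactly the connected components of $L$ that $K$ meets, and $K_i = K\cap L_i$. Since the spectrum of a direct sum is the (multiset) union of the spectra, it suffices to prove that each $\Delta_0^{K_i,L_i}$ has $0$ as an eigenvalue with multiplicity exactly $1$, i.e.\ its nullity is $1$ whenever $L_i$ is connected. Both assertions of the theorem then follow: the total multiplicity of $0$ is $\sum_{i=1}^m 1 = m$, which gives item~2; and item~1 is the special case where $L$ itself is connected (so $m=1$ and the nullity is exactly $1$, forcing $\lambda_{0,1}^{K,L}=0<\lambda_{0,2}^{K,L}$), together with the observation that $0$ is always an eigenvalue since $\Delta_0^{K,L}$ is positive semi-definite and $K$ is nonempty.

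So the core claim is: if $L'$ is connected and $K' \subseteq L'$ is a nonempty simplicial pair over a common ordered vertex set, then $\mathrm{nullity}(\Delta_0^{K',L'}) = 1$. Here I would invoke \Cref{thm:pers-betti-pers-lap}, which identifies $\mathrm{nullity}(\Delta_0^{K',L'})$ with the $0$-th persistent Betti number $\beta_0^{K',L'}$, i.e.\ the rank of the image of $f_0^{K',L'}: H_0(K') \to H_0(L')$. Since $L'$ is connected, $H_0(L') \cong \R$, so the image of $f_0^{K',L'}$ is either $\{0\}$ or all of $\R$; and because $K'$ is nonempty, any vertex $v\in S_0^{K'}$ maps to a generator of $H_0(L')$ (connectedness of $L'$ means all vertices represent the same nonzero class), so the image is all of $\R$ and $\beta_0^{K',L'} = 1$. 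This yields the claim. Alternatively, if one prefers to avoid citing \Cref{thm:pers-betti-pers-lap}, one can argue directly: $\Delta_0^{K',L'} = \partial_1^{L',K'}\circ(\partial_1^{L',K'})^*$, so its kernel is $\ker\big((\partial_1^{L',K'})^*\big) = \big(\mathrm{im}(\partial_1^{L',K'})\big)^\perp$ inside $C_0^{K'}$; hence the nullity equals $n_0^{K'} - \rank(\partial_1^{L',K'})$, and one shows $\rank(\partial_1^{L',K'}) = n_0^{K'}-1$ by exhibiting, for the connected graph-skeleton of $L'$, enough $1$-chains in $C_1^{L',K'}$ (edges and paths in $L'$ with boundary in $C_0^{K'}$) to connect all vertices of $K'$ — precisely a spanning structure argument as in the classical graph Laplacian case.

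I would write up the argument via \Cref{lm:connected component} plus \Cref{thm:pers-betti-pers-lap} as the main line, since it is cleanest, and perhaps note the direct rank computation as an alternative. The main obstacle is the direct-computation route: one must check that within each connected component $L_i$, every vertex of $K_i$ can be "reached" by a $1$-chain lying in $C_1^{L_i,K_i}$ (not merely in $C_1^{L_i}$) — i.e.\ that restricting to chains whose boundary stays in $C_0^{K_i}$ does not disconnect things. This is true because a path in $L_i$ between two vertices of $K_i$ has boundary equal to the difference of those two $0$-simplices, which lies in $C_0^{K_i}$, so the whole path is a legitimate element of $C_1^{L_i,K_i}$; making this precise is the one spot requiring care. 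Going through \Cref{thm:pers-betti-pers-lap} sidesteps this entirely, so I would prefer that route.
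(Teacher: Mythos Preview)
Your proposal is correct but takes a genuinely different route from the paper. The paper proves item~1 first by direct computation: it explicitly exhibits the constant chain $c_0^K = \sum_{v\in S_0^K}[v]$ as a kernel element of $\Delta_0^{K,L}$, and then, assuming $L$ connected, shows any kernel element must be a scalar multiple of $c_0^K$ by pairing it against $[v]-[w]$ realized as the boundary of a path in $L$ (this is essentially your ``alternative'' direct argument). Item~2 is then deduced from item~1 via \Cref{lm:connected component}. You instead reverse the dependency: go straight through \Cref{lm:connected component} to reduce to the connected case, then invoke \Cref{thm:pers-betti-pers-lap} and the elementary homological fact $\beta_0^{K',L'}=1$ when $L'$ is connected and $K'$ nonempty. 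Your route is shorter and conceptually cleaner, at the cost of relying on the heavier \Cref{thm:pers-betti-pers-lap}; since that theorem's proof in the paper does not use \Cref{thm:connected component} (it rests on \Cref{thm:basis-persistent-Laplcacian}), there is no circularity, though you should flag the logical order. One small slip: positive semi-definiteness alone does \emph{not} force $0$ to be an eigenvalue, so that aside is wrong as stated---but it is also unnecessary, since your core claim already yields nullity $m\geq 1$ whenever $K$ is nonempty.
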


We have a complete description of the behavior of the up persistent Laplacian on \emph{interior simplices}, where a $q$-simplex $\sigma\in S_q^K$ is called an {interior simplex} if $\sigma$ only shares cofaces with $q$-simplices in $K$, i.e., $\forall \sigma'\in S_q^L,$ if $\sigma\cup \sigma'\in S_{q+1}^L,$ then $\sigma'\in S_q^K.$

\begin{theorem}\label{thm:pers_interior}
Let $c^L\in C_q^L$ and let $c^K$ be the image of $c^L$ under the orthogonal projection $C_q^L\rightarrow C_q^K$. Then, for any interior simplex $\sigma\in S_q^K$, we have that
\[\left\langle \Delta_{q,\mathrm{up}}^Lc^L, [\sigma] \right\rangle_{w_{q}^L}=\left\langle \Delta_{q,\mathrm{up}}^{K,L}c^K, [\sigma] \right\rangle_{w_{q}^K}. \]
\end{theorem}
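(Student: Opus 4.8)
The plan is to reduce both sides of the claimed identity to pairings built from the ordinary coboundary $\left(\partial_{q+1}^L\right)^\ast$, and then to invoke the interiority of $\sigma$ twice. First I would record the relationship between the ``diagonal'' adjoint and the ordinary adjoint: writing $P\colon C_{q+1}^L\to C_{q+1}^{L,K}$ for the orthogonal projection, a one-line adjunction argument (using that $\partial_{q+1}^{L,K}$ is the restriction of $\partial_{q+1}^L$ and that $C_{q+1}^{L,K}\subseteq C_{q+1}^L$ and $C_q^K\subseteq C_q^L$ carry the restricted inner products) gives $\left(\partial_{q+1}^{L,K}\right)^\ast x = P\big(\left(\partial_{q+1}^L\right)^\ast x\big)$ for every $x\in C_q^K$. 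Hence $\Delta_{q,\mathrm{up}}^{K,L}x = \partial_{q+1}^L\big(P\left(\partial_{q+1}^L\right)^\ast x\big)$, regarded inside $C_q^K$.

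Second, I would isolate the combinatorial consequence of interiority: every coface $\tau\in S_{q+1}^L$ of $\sigma$ lies in $C_{q+1}^{L,K}$. Indeed, any $q$-face $\rho\neq\sigma$ of such a $\tau$ satisfies $\rho\cup\sigma=\tau\in S_{q+1}^L$, so interiority of $\sigma$ forces $\rho\in S_q^K$; thus all $q$-faces of $\tau$ lie in $K$ and $\partial_{q+1}^L[\tau]\in C_q^K$. Since $\left(\partial_{q+1}^L\right)^\ast[\sigma]$ is supported precisely on the cofaces of $\sigma$, it follows that $\left(\partial_{q+1}^L\right)^\ast[\sigma]\in C_{q+1}^{L,K}$, so $P$ fixes it.

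Third, combining self-adjointness of $P$ with $P\left(\partial_{q+1}^L\right)^\ast[\sigma]=\left(\partial_{q+1}^L\right)^\ast[\sigma]$, the right-hand side becomes $\big\langle \Delta_{q,\mathrm{up}}^{K,L}c^K,[\sigma]\big\rangle = \big\langle P\left(\partial_{q+1}^L\right)^\ast c^K,\,\left(\partial_{q+1}^L\right)^\ast[\sigma]\big\rangle = \big\langle \left(\partial_{q+1}^L\right)^\ast c^K,\,\left(\partial_{q+1}^L\right)^\ast[\sigma]\big\rangle$, while the left-hand side equals $\big\langle \left(\partial_{q+1}^L\right)^\ast c^L,\,\left(\partial_{q+1}^L\right)^\ast[\sigma]\big\rangle$. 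Writing $c^L=c^K+c^\perp$ with $c^\perp$ orthogonal to $C_q^K$, the difference of the two sides is $\big\langle \left(\partial_{q+1}^L\right)^\ast c^\perp,\,\left(\partial_{q+1}^L\right)^\ast[\sigma]\big\rangle = \big\langle \Delta_{q,\mathrm{up}}^L c^\perp,[\sigma]\big\rangle$. Expanding $c^\perp$ over the $q$-simplices of $L$ not in $K$, each term contributes a multiple of $\big\langle \Delta_{q,\mathrm{up}}^L[\sigma'],[\sigma]\big\rangle$ with $\sigma'\notin S_q^K$; such a pairing is nonzero only when $\sigma$ and $\sigma'$ share a common coface, which interiority of $\sigma$ again forbids. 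Hence the difference vanishes and the identity follows.

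I expect the only real obstacle to be bookkeeping: keeping the three inner-product spaces $C_q^K\subseteq C_q^L$, $C_{q+1}^{L,K}\subseteq C_{q+1}^L$ and their restricted inner products straight, and using the identification $\partial_{q+1}^{L,K}=\partial_{q+1}^L\big|_{C_{q+1}^{L,K}}$ (landing in $C_q^K$) consistently. Once the adjoint formula $\left(\partial_{q+1}^{L,K}\right)^\ast = P\circ\left(\partial_{q+1}^L\right)^\ast$ and the observation that cofaces of an interior simplex lie in $C_{q+1}^{L,K}$ are in place, the remainder is a short symmetric computation with no analytic content.
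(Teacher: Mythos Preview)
Your proof is correct and takes a genuinely different route from the paper's. The paper proves this theorem as a corollary of its Schur complement characterization (\Cref{thm:persis-Laplacian-schur-formula}): writing $\lap_{q,\mathrm{up}}^{K,L}=\lap_{q,\mathrm{up}}^L([n_q^K],[n_q^K])-\lap_{q,\mathrm{up}}^L([n_q^K],I_K^L)\,\lap_{q,\mathrm{up}}^L(I_K^L,I_K^L)^\dagger\,\lap_{q,\mathrm{up}}^L(I_K^L,[n_q^K])$, it observes that interiority makes the $i$-th row of the off-diagonal block $\lap_{q,\mathrm{up}}^L([n_q^K],I_K^L)$ vanish, so the correction term contributes nothing in row $i$ and the $i$-th entry of $\lap_{q,\mathrm{up}}^{K,L}c^K$ agrees with that of $\lap_{q,\mathrm{up}}^L c^L$. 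You instead work directly from the operator definition, establishing the adjoint identity $\bigl(\partial_{q+1}^{L,K}\bigr)^\ast=P\circ\bigl(\partial_{q+1}^L\bigr)^\ast$ and then using interiority twice: once to show $\bigl(\partial_{q+1}^L\bigr)^\ast[\sigma]\in C_{q+1}^{L,K}$ (so $P$ is harmless), and once to kill $\langle\Delta_{q,\mathrm{up}}^L[\sigma'],[\sigma]\rangle$ for $\sigma'\notin S_q^K$. Both arguments ultimately hinge on the same combinatorial fact---that $\sigma$ shares no $(q{+}1)$-coface with any $q$-simplex outside $K$---but yours is self-contained and does not rely on the Schur complement machinery (in particular, it avoids \Cref{lm:schur-complement-matrix-operation}), while the paper's version showcases that machinery as a unifying tool. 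Your approach is arguably more elementary and would sit naturally in \Cref{sec:basic properties of persistent Laplacian} before the Schur complement is introduced; the paper's approach has the virtue of demonstrating the reach of \Cref{thm:persis-Laplacian-schur-formula}.
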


The following result showing persistent Laplacians recover persistent Betti numbers was mentioned in passing and without proof in \cite{wang2020persistent} and was also implicitly contained in pages 10 and 11 of \cite{lapslides}. 
We give a full proof in \Cref{sec:proofs from the paper}.

\begin{theorem}\label{thm:pers-betti-pers-lap} 
For each integer $q\geq 0$, we have that $\beta_q^{K,L}=\mathrm{nullity}\lc\Delta_q^{K,L}\rc$.
\end{theorem}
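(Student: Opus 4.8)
The plan is to compute both quantities as dimensions of explicit subspaces of $C_q^K$ and check that they coincide. Throughout, orthogonal complements are taken with respect to the inner product $\langle\cdot,\cdot\rangle_{C_q^K}$, inside $C_q^K$.

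\emph{Step 1 (unwinding the persistent Betti number).} By definition $\beta_q^{K,L}$ is the rank of the image of the inclusion-induced map $f_q^{K,L}\colon H_q(K)\to H_q(L)$, which sends the class of a cycle $z\in\ker(\partial_q^K)$ to its class in $H_q(L)$. A standard quotient computation gives
\[
\mathrm{im}(f_q^{K,L})\ \cong\ \ker(\partial_q^K)\big/\bigl(\ker(\partial_q^K)\cap\mathrm{im}(\partial_{q+1}^L)\bigr),
\]
so that $\beta_q^{K,L}=\dim\ker(\partial_q^K)-\dim\bigl(\ker(\partial_q^K)\cap\mathrm{im}(\partial_{q+1}^L)\bigr)$.

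\emph{Step 2 (the intersection is $\mathrm{im}(\partial_{q+1}^{L,K})$).} I claim $\ker(\partial_q^K)\cap\mathrm{im}(\partial_{q+1}^L)=\mathrm{im}(\partial_{q+1}^{L,K})$. For the hard inclusion, take $b\in\mathrm{im}(\partial_{q+1}^L)\cap C_q^K$, say $b=\partial_{q+1}^L(c)$ with $c\in C_{q+1}^L$; then $c\in C_{q+1}^{L,K}$ by the very definition of $C_{q+1}^{L,K}$, so $b\in\mathrm{im}(\partial_{q+1}^{L,K})$. Moreover such a $b$ automatically lies in $\ker(\partial_q^K)$, since $\partial_q^K(b)=\partial_q^L(b)=\partial_q^L\partial_{q+1}^L(c)=0$; hence $\ker(\partial_q^K)\cap\mathrm{im}(\partial_{q+1}^L)=C_q^K\cap\mathrm{im}(\partial_{q+1}^L)$. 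The reverse inclusion $\mathrm{im}(\partial_{q+1}^{L,K})\subseteq C_q^K\cap\mathrm{im}(\partial_{q+1}^L)$ is immediate from $\partial_{q+1}^{L,K}$ being a restriction of $\partial_{q+1}^L$ with codomain $C_{q-1}^K$ forcing its image into $C_q^K$ (indeed, into $\ker(\partial_q^K)$). Combined with Step 1, this yields $\beta_q^{K,L}=\dim\ker(\partial_q^K)-\dim\mathrm{im}(\partial_{q+1}^{L,K})$, and also records the fact $\mathrm{im}(\partial_{q+1}^{L,K})\subseteq\ker(\partial_q^K)$, needed below.

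\emph{Step 3 (nullity of the operator) and conclusion.} Since $\Delta_q^{K,L}=\partial_{q+1}^{L,K}(\partial_{q+1}^{L,K})^\ast+(\partial_q^K)^\ast\partial_q^K$ is a sum of two positive semi-definite operators, its kernel is the intersection of the two kernels; using $\ker(AA^\ast)=\ker(A^\ast)$ and $\ker(B^\ast B)=\ker(B)$ this gives
\[
\ker\bigl(\Delta_q^{K,L}\bigr)=\ker\bigl((\partial_{q+1}^{L,K})^\ast\bigr)\cap\ker(\partial_q^K)=\mathrm{im}(\partial_{q+1}^{L,K})^{\perp}\cap\ker(\partial_q^K).
\]
Now invoke the modular law: because $\mathrm{im}(\partial_{q+1}^{L,K})\subseteq\ker(\partial_q^K)$ (Step 2), intersecting the orthogonal decomposition $C_q^K=\mathrm{im}(\partial_{q+1}^{L,K})\oplus\mathrm{im}(\partial_{q+1}^{L,K})^{\perp}$ with $\ker(\partial_q^K)$ produces the orthogonal splitting $\ker(\partial_q^K)=\mathrm{im}(\partial_{q+1}^{L,K})\oplus\ker(\Delta_q^{K,L})$. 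Taking dimensions, $\mathrm{nullity}(\Delta_q^{K,L})=\dim\ker(\partial_q^K)-\dim\mathrm{im}(\partial_{q+1}^{L,K})=\beta_q^{K,L}$, which is the claim. (The case $q=0$ is subsumed: then $\partial_0^K=0$, so $\ker(\partial_0^K)=C_0^K$ and both sides reduce to $\dim C_0^K-\dim\mathrm{im}(\partial_1^{L,K})$.)

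\emph{Main obstacle.} The only real content is Step 2: identifying $\ker(\partial_q^K)\cap\mathrm{im}(\partial_{q+1}^L)$ — the chains that are boundaries in $L$ and cycles in $K$, which is exactly what the persistent Betti number quotients out — with the image of the diagonal operator $\partial_{q+1}^{L,K}$, and observing that this intersection already sits inside $\ker(\partial_q^K)$. This is precisely where the definition $C_{q+1}^{L,K}=\{c:\partial_{q+1}^L(c)\in C_q^K\}$ is doing the work, ensuring that the ``up'' part of the persistent Laplacian captures exactly the boundaries of $L$ supported on $K$. Everything else is the standard Hodge/positive-semidefinite bookkeeping used for \Cref{thm:ker lap = homology}.
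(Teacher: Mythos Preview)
Your proof is correct and follows essentially the same strategy as the paper's: both identify $\mathrm{im}(f_q^{K,L})\cong\ker(\partial_q^K)/\mathrm{im}(\partial_{q+1}^{L,K})$ and then use the positive-semidefinite/Hodge argument to compute the nullity, with the paper packaging the latter as a matrix fact (cited from Lim) after choosing an ONB of $C_{q+1}^{L,K}$, while you work directly at the operator level and spell out Step~2 explicitly. One small slip: in Step~2 you write ``codomain $C_{q-1}^K$'' when you mean $C_q^K$ (the codomain of $\partial_{q+1}^{L,K}$), but this does not affect the argument.
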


%%%%%%%%%%%%%%%%%%%%%%%%%%%%%%%%%%%%%%%%%%%%%%%%%%%%%%%%%%
\section{A first algorithm for computing a matrix representation of $\Delta_q^{K,L}$}\label{sec:first algo persistent Laplacian}

In this section, we first provide a matrix representation $\lap_q^{K,L}$ of $\Delta_q^{K,L}$ given the canonical basis $\bar{S}_q^K$ of $C_q^K$ and then devise an algorithm for computing $\lap_q^{K,L}$. \footnote{In \cite{wang2020persistent} it is suggested that the $q$-th persistent Laplacian $\Delta_q^{K,L}$ can be computed by (i) taking a certain submatrix of the boundary operator and then (ii) multiplying it by its transpose. However, simply following these two steps does not yield a correct algorithm. The calculation of the matrix form of the persistent Laplacian turned out to be rather subtle as shown in \Cref{thm:weighted-basis-persistent-Laplcacian}; see also \Cref{sec:wrong algorithm example} for details. } 

\medskip
\noindent\textbf{Note:} For simplicity, given a simplicial pair $K\hookrightarrow L$, for each $q\in\mathbb N$ we assume an ordering $\bar{S}_q^L=\{[\sigma_i]\}_{i=1}^{n_q^L}$ on $\Bar{S}_q^L$  such that $\bar{S}_q^K=\{[\sigma_i]\}_{i=1}^{n_q^K}$. Unless otherwise specified, matrix representations of operators between chain groups are \emph{always} from such orderings on canonical bases $\Bar{S}_q^K$ and $\Bar{S}_q^L$ of $C_q^K$ and $C_q^L$, respectively.

\begin{theorem}\label{thm:weighted-basis-persistent-Laplcacian}
Assume that $n_{q+1}^{L,K}\coloneqq\dim\lc C_{q+1}^{L,K}\rc>0$. Choose any basis of $C_{q+1}^{L,K}\subseteq C_{q+1}^{L}$ represented by a column matrix $Z\in\mathbb{R}^{n_{q+1}^L\times n_{q+1}^{L,K}}$. Let $B_q^K$ and $B_{q+1}^{L,K}$ be matrix representations of boundary maps $\partial_q^K$ and $\partial_{q+1}^{L,K}$, respectively. Let $W_q^K$ (or $W_q^L$) denote the diagonal weight matrix representation of $w_q^K$ (or $w_q^L$). Then, the matrix representation $\lap_q^{K,L}$ of $\Delta_q^{K,L}$ is expressed as follows: 
\begin{equation} \label{eq:weighted-perLoperator}
\lap_q^{K,L}=  \underbrace{B_{q+1}^{L,K}\left(Z^\mathrm{T}\lc W_{q+1}^L\rc^{-1}Z\right)^{-1}\left( B_{q+1}^{L,K}\right)^\mathrm{T}\lc W_q^K\rc^{-1}}_{\lap_{q,\mathrm{up}}^{K,L}}+\underbrace{W_q^K\left( B_q^K\right)^\mathrm{T} \lc W_{q-1}^K\rc^{-1}B_q^K}_{\lap_{q,\mathrm{down}}^{K}}.
\end{equation}
Moreover, $\lap_q^{K,L}$ is invariant under the choice of basis for $C_{q+1}^{L,K}$.
\end{theorem}

\begin{remark}[Matrix representations of combinatorial Laplacians]\label{rmk:w=1 weighted laplacian}
When $K=L$, \Cref{eq:weighted-perLoperator} reduces to the matrix representation of the combinatorial Laplacian:
\[\lap_q^{K}\coloneqq \underbrace{B_{q+1}^{K} W_{q+1}^K\left( B_{q+1}^{K}\right)^\mathrm{T}\lc W_q^K\rc^{-1}}_{\lap^{K}_{q,\mathrm{up}}} + \underbrace{W_q^K\left( B_q^K\right)^\mathrm{T} \lc W_{q-1}^K\rc^{-1}B_q^K}_{\lap_{q,\mathrm{down}}^{K}}.\]
Since $B_{q+1}^{K} W_{q+1}^K\left( B_{q+1}^{K}\right)^\mathrm{T}\lc W_q^K\rc^{-1}=\lc W_q^K\rc^{\frac{1}{2}}\lc \lc W_q^K\rc^{-\frac{1}{2}} B_{q+1}^{K} W_{q+1}^K\left( B_{q+1}^{K}\right)^\mathrm{T} \lc W_q^K\rc^{-\frac{1}{2}}\rc \lc W_q^K\rc^{-\frac{1}{2}},$ $\lap_{q,\mathrm{up}}^{K}$ is of the form $W^{-1}PW$ where $P$ is symmetric positive semi-definite and $W$ is a positive diagonal matrix. The same result holds for down Laplacians, up persistent Laplacians, and (persistent) Laplacians.
Note that if $w_q^K\equiv 1$, then $\lap_q^{K}={B_{q+1}^{K}W_{q+1}^K \left(B_{q+1}^{K}\right)^\T }+ {\left( B_q^K\right)^\mathrm{T} \lc W_{q-1}^K\rc^{-1}B_q^K}$ is itself a symmetric positive semi-definite matrix.
\end{remark}

To prove the theorem, we need the following result:
\begin{lemma}\label{lm:linear map inner product basis}
Let $f:(\mathbb{R}^n,W_n)\rightarrow(\mathbb{R}^m,W_m)$ be a linear map where $W_n\in \mathbb{R}^{n\times n}$ and $W_m\in \R^{m\times m}$ denote the inner product matrices. Let $F\in \mathbb{R}^{m\times n}$ denote the matrix representation of $f$. Then, the matrix representation $F^*$ of the adjoint $f^*$ of $f$ is $W_n^{-1}F^\T W_m $.
\end{lemma}
\begin{proof}
For any $x=(x_1,\ldots,x_n)^\T\in \mathbb{R}^{n}$ and $y=(y_1,\ldots,y_m)^\T\in \mathbb{R}^{m}$, we have that
\[\left\langle fx,y\right\rangle_{\R^m}=\lc Fx\rc^\mathrm{T}W_my=x^\mathrm{T}F^\mathrm{T}W_my,\text{ and }\,\left\langle x,f^*y\right\rangle_{ \R^n}=x^\mathrm{T}W_nF^*y.\]
Since $\left\langle fx,y\right\rangle_{\R^m}=\left\langle x,f^*y\right\rangle_{ \R^n}$ and $x,y$ are arbitrary, we must have that 
$F^*=W_n^{-1}F^\T W_m $.
\end{proof}

\begin{proof}[Proof of \Cref{thm:weighted-basis-persistent-Laplcacian}]
Base on our choice of bases for $C_{q+1}^{L,K}$, $C_q^K$ and $C_{q-1}^K$, the corresponding inner product matrices are $Z^\mathrm{T}\lc W_{q+1}^L\rc^{-1}Z$, $\lc W_{q}^K\rc^{-1}$ and $\lc W_{q-1}^K\rc^{-1}$, respectively. By \Cref{lm:linear map inner product basis}, the matrix representation for $(\partial_{q+1}^{L,K})^\ast$ is $\left(Z^\mathrm{T}\lc W_{q+1}^L\rc^{-1}Z\right)^{-1}( B_{q+1}^{L,K})^\mathrm{T}\lc W_{q}^K\rc^{-1}$ and the matrix representation for $(\partial_{q}^{K})^\ast$ is $W_{q}^K\lc B_q^K\rc^\T \lc W_{q-1}^K\rc^{-1}$. By \Cref{eq:persist-Laplacian-no-basis}, we have
\[\lap_q^{K,L}=  B_{q+1}^{L,K}\left(Z^\mathrm{T}\lc W_{q+1}^L\rc^{-1}Z\right)^{-1}\left( B_{q+1}^{L,K}\right)^\mathrm{T}\lc W_q^K\rc^{-1}+W_q^K\left( B_q^K\right)^\mathrm{T} \lc W_{q-1}^K\rc^{-1}B_q^K.\]

Since $\partial_{q+1}^{L,K}\left(\partial_{q+1}^{L,K}\right)^\ast$ is a self-operator on $C_q^K$, its matrix representation $\lap_q^{K,L}$ only depends on the choice of basis of $C_q^K$ and it is thus independent of the choice of basis of $ C_{q+1}^{L,K}$. 
\end{proof}

\paragraph*{An algorithm for computing the matrix representation of $\Delta_q^{K,L}$} We use the symbol $[n]$ to denote the set $\{1,\ldots,n\}$ for a positive integer $n$. 
We first introduce a notation for representing submatrices. Let $M\in\R^{m\times n}$ be a real matrix and let $\emptyset\neq I\subseteq [m]$ and $\emptyset\neq J\subseteq[n]$. We denote by $M(I,J)$ the submatrix of $M$ consisting of those rows and columns indexed by $I$ and $J$, respectively. Moreover, we use $M(:,J)$ (or $M(I,:)$) to denote $M([m],J)$ (or $M(I,[n])$).

By \Cref{thm:weighted-basis-persistent-Laplcacian}, to compute a matrix representation of $\Delta_q^{K,L}$, the key is to produce a basis {(i.e., $Z$)} for $C_{q+1}^{L,K}$. Let $B_{q+1}^L\in\R^{n_q^L\times n_{q+1}^L}$ be the matrix representation of the boundary map $\partial_{q+1}^L$. 
We assume that $n_q^K<n_q^L$ since the case $n_q^K=n_q^L$ is trivial (cf. \Cref{ex:trivial cases}). Then, the
following lemma (proof in \Cref{sec:proofs from the paper}) suggests a way of {constructing $Z$} from $B_{q+1}^L$.

\begin{lemma}\label{lm:image in subspace}
Let $D_{q+1}^L\coloneqq B_{q+1}^L\lc [n_q^L]\backslash[n_q^K],: \rc$. Then, there exists a non-singular matrix $Y\in\R^{n_{q+1}^L\times n_{q+1}^L}$ such that $R_{q+1}^L\coloneqq D_{q+1}^LY$ is column reduced\footnote{We say a matrix is \emph{column reduced}, if for each two non-zero columns, their indices of lowest non-zero elements are different.}. Moreover, let $I\subseteq[n_{q+1}^L]$ be the index set of $0$ columns of $R_{q+1}^L$. The following hold:
\begin{enumerate}
    \item If $I=\emptyset$, then $C_{q+1}^{L,K}=\{0\}$;
    \item If $I\neq\emptyset$, let $Z\coloneqq Y(:,I)$, then columns of $Z$ constitute a basis of $C_{q+1}^{L,K}$. 
\end{enumerate}
Moreover, if $I\neq\emptyset$, then $B_{q+1}^{L,K}\coloneqq\lc B_{q+1}^LY\rc\lc[n_q^K],I\rc$ is the matrix representation of $\partial_{q+1}^{L,K}$.
\end{lemma}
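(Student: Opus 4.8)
\textbf{Proof plan for \Cref{lm:image in subspace}.}

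The plan is to reduce everything to a statement about the matrix $B_{q+1}^L$ and the coordinate subspace $C_{q-1}^K \subseteq C_{q-1}^L$ (here more precisely $C_q^K\subseteq C_q^L$). First I would recall that, in the chosen orderings, $C_q^K$ is exactly the span of the first $n_q^K$ standard basis vectors of $C_q^L$, so a chain $c\in C_{q+1}^L$ lies in $C_{q+1}^{L,K}$ if and only if $\partial_{q+1}^L(c)$ has zero coordinates in the rows indexed by $[n_q^L]\setminus[n_q^K]$, i.e.\ if and only if $D_{q+1}^L\, c = 0$. Thus $C_{q+1}^{L,K} = \ker\!\lc D_{q+1}^L\rc$, and the whole lemma becomes an exercise about extracting a basis of the kernel of $D_{q+1}^L$ via column reduction.

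Next I would invoke the standard fact that any matrix can be column-reduced by right multiplication by an invertible matrix: one performs elementary column operations (each realized by right multiplication by an elementary matrix), so their composite is the desired non-singular $Y$ with $R_{q+1}^L = D_{q+1}^L Y$ column reduced, where ``column reduced'' means distinct pivot (lowest non-zero) rows among the non-zero columns. The key linear-algebra observation is then: after column reduction, the non-zero columns of $R_{q+1}^L$ are linearly independent (their distinct pivots force independence, exactly as in the correctness proof of the standard persistence reduction algorithm), hence $\rank D_{q+1}^L = \#\{\text{non-zero columns of }R_{q+1}^L\}$, and consequently $\dim\ker D_{q+1}^L = |I|$, where $I$ indexes the zero columns of $R_{q+1}^L$. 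Moreover, for $j\in I$, the $j$-th column of $R_{q+1}^L$ equals $D_{q+1}^L\,(Y e_j) = 0$, so $Y(:,I) = Z$ has its columns in $\ker D_{q+1}^L = C_{q+1}^{L,K}$; these columns are linearly independent because $Y$ is non-singular, and there are $|I| = \dim C_{q+1}^{L,K}$ of them, so they form a basis. The case $I=\emptyset$ gives $\dim C_{q+1}^{L,K}=0$, i.e.\ $C_{q+1}^{L,K}=\{0\}$, which is item (1); item (2) is the statement just argued.

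Finally, for the matrix representation of $\partial_{q+1}^{L,K}$: by definition $\partial_{q+1}^{L,K}$ is the restriction of $\partial_q^L$ (read $\partial_{q+1}^L$) to $C_{q+1}^{L,K}$ with codomain $C_q^K$. With respect to the basis $Z = Y(:,I)$ of the domain and the standard basis $\bar S_q^K$ of the codomain, the $j$-th column of the representing matrix is the $C_q^K$-coordinate vector of $\partial_{q+1}^L(Z e_j)$, which is $\lc B_{q+1}^L Z\rc$ with its rows restricted to $[n_q^K]$ — and since $Z = Y(:,I)$, this is precisely $\lc B_{q+1}^L Y\rc\lc [n_q^K], I\rc$ (the rows outside $[n_q^K]$ being zero by construction of $I$, which is also a sanity check that the image indeed lands in $C_q^K$).

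The only genuinely non-routine point is the correctness of column reduction, namely that distinct pivots imply linear independence of the non-zero columns and hence that the zero columns account exactly for a basis of the kernel; this is the same argument underlying the classical persistent-homology reduction algorithm, so I would either cite it or include the short pivot-chasing argument. Everything else — the identification $C_{q+1}^{L,K} = \ker D_{q+1}^L$, the existence of $Y$ as a product of elementary column operations, and the read-off of $B_{q+1}^{L,K}$ — is bookkeeping with the fixed basis orderings.
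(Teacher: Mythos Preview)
Your proposal is correct and follows essentially the same approach as the paper: identify $C_{q+1}^{L,K}=\ker\lc D_{q+1}^L\rc$ (the paper phrases this via the orthogonal projection $\pi^\perp:C_q^L\to (C_q^K)^\perp$, but the content is identical), use column reduction to extract a basis of this kernel as $Z=Y(:,I)$, and read off $B_{q+1}^{L,K}$ from the top $n_q^K$ rows of $B_{q+1}^LY$ restricted to the columns $I$. Your write-up is in fact a bit more explicit about why the non-zero columns of $R_{q+1}^L$ are independent (the distinct-pivot argument), which the paper simply asserts.
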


We can apply a column reduction process (e.g., Gaussian elimination) to $D_{q+1}^L$ to obtain $Y\in\R^{n_{q+1}^L\times n_{q+1}^L}$ and $R_{q+1}^L\coloneqq D_{q+1}^LY$ requested in \Cref{lm:image in subspace}. See \Cref{algo-pers-lap} for a pseudocode for computing $\lap_q^{K,L}$ based on \Cref{lm:image in subspace}.

\begin{algorithm}[htb]
 \caption{Persistent Laplacian: matrix representation}
 \label{algo-pers-lap}
 \begin{algorithmic}[1]
 \STATE \textbf{Data:} $B_q^K,B_{q+1}^L,W_{q-1}^K,W_q^K$ and $W_{q+1}^L$
 \STATE \textbf{Result:} $\lap_q^{K,L}$
 \STATE compute $\lap_{q,\mathrm{down}}^K$ from $B_q^K,W_{q-1}^K$ and $W_q^K$
 \IF{$n_q^K==n_q^L$}
 \STATE compute $\lap_{q,\mathrm{up}}^L$ from $B_{q+1}^L,W_{q}^K$ and $W_{q+1}^L$;  ~~\RETURN $ \lap_{q,\mathrm{up}}^L+\lap_{q,\mathrm{down}}^K$
 \ENDIF
 \STATE $D_{q+1}^L=B_{q+1}^L\left([n_q^L]\backslash[n_q^K],:\right) $
 \STATE $(R_{q+1}^L,Y)=\mathrm{ColumnReduction}(D_{q+1}^L) $
 \STATE $I\gets$ index set corresponding to the all-zero columns of $R_{q+1}^L$
 \IF{$I==\emptyset$} 
 \RETURN $\lap_{q,\mathrm{down}}^K$
 \ENDIF
 \STATE $Z = Y(:,I)$
 \STATE $B_{q+1}^{L,K}=\left(B_{q+1}^LY\right)\lc [n_q^K],I\rc$
 \STATE \RETURN $ B_{q+1}^{L,K}\left(Z^\mathrm{T}\lc W_{q+1}^L\rc^{-1}Z\right)^{-1}\left( B_{q+1}^{L,K}\right)^\mathrm{T}\lc W_q^K\rc^{-1}+\lap_{q,\mathrm{down}}^K$
\end{algorithmic}
\end{algorithm}

\paragraph{Complexity analysis} The computation of $\lap_{q,\mathrm{down}}^K$ takes time $O\left(\lc n_{q}^K\rc^2\right)$ (See \Cref{sec:computation of up and down} for details). The size of $D_{q+1}^L$ is $\lc n_q^L-n_q^K\rc\times n_{q+1}^L$; thus the column reduction process takes time $O\left((n_q^L-n_q^K)\lc n_{q+1}^L\rc^2\right)$. Computing the product $B_{q+1}^LY$ takes time $O\left(n_q^L\lc n_{q+1}^L\rc^2\right)$. The size of $Z$ is $n_{q+1}^L\times |I|$, where $|I|\leq n_{q+1}^L$. Then, computing $\left(Z^\mathrm{T}\lc W_{q+1}^L\rc^{-1}Z\right)^{-1}$ takes time $O\left(\lc n_{q+1}^L\rc^3\right)$. The product $B_{q+1}^{L,K}\left(Z^\mathrm{T}\lc W_{q+1}^L\rc^{-1}Z\right)^{-1}\left( B_{q+1}^{L,K}\right)^\mathrm{T}\lc W_q^K\rc^{-1}$ can be computed in time
$O\left(n_q^K\lc n_{q+1}^L\rc^2\right)$. 
Hence \Cref{algo-pers-lap} takes 
$O\left(n_q^L\lc n_{q+1}^L\rc^2+\lc n_{q+1}^L\rc^3+\lc n_{q}^K\rc^2\right)$ total time. 
One can also improve this time complexity by using fast matrix-multiplication to both perform reduction and compute multiplication/inverse.  We omit the details.

%%%%%%%%%%%%%%%%%%%%%%%%%%%%%%%%%%%%%%%%%%%%%%%%%%%%%%%%%%
\section{Schur complement, persistent Laplacian and  implications}\label{sec:schur complement}

Let $M\in\R^{n\times n}$ be a block matrix $M=\begin{pmatrix}
A &  B\\
C & D
\end{pmatrix}$ where $D\in\R^{d\times d}$ is a square matrix. Then, \emph{the (generalized) Schur complement of $D$ in $M$} \cite{carlson1974generalization}, denoted by $M/D$, is $M/D\coloneqq A-BD^\dagger C$, 
where $D^\dagger$ is the Moore-Penrose generalized inverse of $D$. Note that having $D$ to be the bottom right submatrix is only for notational simplicity. Schur complement is defined for any principal submatrix. More precisely, let $\emptyset\neq I\subsetneq [n]$ be a proper subset. Then, the (generalized) Schur complement of $M(I,I)$ in $M$ is defined as
\begin{equation}\label{eq:Schur complement index formula}
    M/M(I,I)\coloneqq M([n]\backslash I,[n]\backslash I)-M([n]\backslash I,I)M(I,I)^\dagger M(I,[n]\backslash I).
\end{equation}

Now we introduce some useful properties of the Schur complement.

\begin{definition}[Proper submatrices]
Let  $M=\begin{pmatrix}
A &  B\\
C & D
\end{pmatrix}$ be a square block matrix where both $A$ and $D$ are square matrices. The submatrix $D$ is \emph{proper} in $M$ if $\ker(D)\subseteq\ker(B)$ and $\ker\lc D^\T\rc\subseteq\ker\lc C^\T\rc$. 
\end{definition}

\begin{lemma}[Positive semi-definite matrices]\label{lm:psd-proper}
Let $P$ be a positive semi-definite block matrix
$P =\begin{pmatrix}
A &  B\\
C & D
\end{pmatrix}$ such that $A$ and $D$ are square matrices. Let $W$ be a positive diagonal matrix and we write $W$ as a block matrix
$W =\begin{pmatrix}
W_1 &  0\\
0 & W_2
\end{pmatrix}$ such that $W_1$ and $W_2$ have the same sizes as $A$ and $D$, respectively. Consider $M\coloneqq W^{-1}PW=\begin{pmatrix}
W_1^{-1}AW_1 &  W_1^{-1}BW_2\\
W_2^{-1}CW_1 & W_2^{-1}DW_2
\end{pmatrix}$. Then, $W_2^{-1}DW_2$ is proper in $M$ and $M/(W_2^{-1}DW_2)=W_1^{-1}(P/D)W_1.$
\end{lemma}

\begin{lemma}[{\cite[Theorem 1]{carlson1974generalization}}]\label{lm:schur-complement-rank}
Let $M$ be a square block matrix
$M =\begin{pmatrix}
A &  B\\
C & D
\end{pmatrix}$ such that $A$ and $D$ are square matrices. Then, $\mathrm{rank}(M) \geq \mathrm{rank}(D)+\mathrm{rank}(M/D).$
\end{lemma}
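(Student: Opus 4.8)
The plan is to establish the rank inequality $\rank(M)\ge \rank(D)+\rank(M/D)$ by exhibiting an explicit invertible transformation of $M$ (block Gaussian elimination) that produces a block-diagonal-like form whose rank can be read off directly. Concretely, I would look for matrices $X$ and $Y$ (built from $D^\dagger$ and identity blocks) so that
\begin{equation*}
X M Y = \begin{pmatrix} M/D & 0 \\ 0 & D\end{pmatrix} + (\text{error terms}),
\end{equation*}
and then control the error terms using the Moore-Penrose identities $DD^\dagger D = D$, $D^\dagger D D^\dagger = D^\dagger$, together with the fact that $DD^\dagger$ and $D^\dagger D$ are the orthogonal projectors onto $\mathrm{im}(D)$ and $\mathrm{im}(D^\T)$ respectively. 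The natural choice is left-multiplication by $\begin{pmatrix} I & -BD^\dagger \\ 0 & I\end{pmatrix}$ and right-multiplication by $\begin{pmatrix} I & 0 \\ -D^\dagger C & I\end{pmatrix}$; carrying out the block products, the $(1,1)$ block becomes $A - BD^\dagger C = M/D$, the $(1,2)$ block becomes $B - BD^\dagger D = B(I - D^\dagger D)$, the $(2,1)$ block becomes $C - DD^\dagger C = (I - DD^\dagger)C$, and the $(2,2)$ block stays $D$.

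The first step is thus to perform this block computation carefully. The key point to observe is that the transforming matrices are unipotent (unit upper/lower triangular in block form), hence invertible, so $\rank(XMY) = \rank(M)$. The second step is to handle the off-diagonal residual blocks $B(I - D^\dagger D)$ and $(I-DD^\dagger)C$. Without a properness hypothesis these need not vanish, but they have a crucial feature: the columns of $B(I - D^\dagger D)$ lie in $\ker$-complementary directions to $D$'s row space, and similarly for the other block. I would argue that $\rank\begin{pmatrix} M/D & B(I-D^\dagger D) \\ (I-DD^\dagger)C & D\end{pmatrix} \ge \rank(M/D) + \rank(D)$ by noting that the column space of the bottom block $\begin{pmatrix} B(I-D^\dagger D) \\ D\end{pmatrix}$ has dimension $\ge \rank(D)$ (since its lower part already has rank $\rank(D)$), while projecting out these columns from the full matrix, the top-left $M/D$ block still contributes its full rank because $M/D$'s columns, sitting in the top block paired with $0$ in the bottom block (after the column operations that created $M/D$), are linearly independent of anything in the column span of $\begin{pmatrix} B(I-D^\dagger D)\\ D\end{pmatrix}$ modulo that span. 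Making this last independence claim precise is where I expect the main friction.

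More cleanly, I would instead bound ranks via row and column spaces using a dimension-counting / exact-sequence style argument: after the transformation, write the matrix as the sum of a block-diagonal matrix $\mathrm{diag}(M/D, D)$ and a matrix $N$ supported only on the off-diagonal blocks; then use $\rank(XMY) \ge \rank(\text{projection of its column space onto the first block of coordinates}) + \dim(\text{column space }\cap\text{ second block of coordinates})$. The bottom $(2,*)$ rows span exactly $\mathrm{im}$ of $(B(I-D^\dagger D)\mid D)$, which contains $\mathrm{im}(D)$, giving the $\rank(D)$ term; quotienting by the last $d$ coordinates collapses the first block row to $M/D$, giving the $\rank(M/D)$ term. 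Since $M/D$ is exactly what survives in the top-left after elimination, the column space of $XMY$ surjects onto the column space of $M/D$ under the coordinate projection, and the kernel of that projection restricted to $\mathrm{im}(XMY)$ contains $\mathrm{im}(D)$ embedded in the bottom coordinates. This yields $\rank(M) = \rank(XMY) \ge \rank(M/D) + \rank(D)$.

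\textbf{Main obstacle.} The delicate part is not the block algebra (which is mechanical once $DD^\dagger D = D$ is invoked) but rather rigorously separating the rank contributions of $M/D$ and $D$ when the residual off-diagonal blocks are nonzero — i.e., showing the projection-and-intersection dimension bound gives exactly a sum and not an overlap. The cleanest route is the coordinate-projection argument of the last paragraph: $\dim V \ge \dim \pi(V) + \dim(V \cap \ker\pi)$ for any subspace $V$ and linear projection $\pi$, applied with $V = \mathrm{im}(XMY)$ and $\pi$ the projection killing the bottom $d$ coordinates, noting $\pi(V) = \mathrm{im}(M/D)$ and $V \cap \ker\pi \supseteq \{0\}\oplus\mathrm{im}(D)$. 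I would double-check that $\pi(\mathrm{im}(XMY))$ really equals $\mathrm{im}(M/D)$ — this holds because the first block row of $XMY$ is $(M/D \mid B(I-D^\dagger D))$ and the second summand's columns lie in $\mathrm{im}(B) $, but one must verify $\mathrm{im}(M/D) + \mathrm{im}(B(I-D^\dagger D)) = \mathrm{im}(M/D)$ is not automatic; if it fails, one falls back to $\pi(V) \supseteq \mathrm{im}(M/D)$ suffices only for the reverse inequality, so I would instead take $\pi$ to be the projection onto the bottom $d$ coordinates and run the symmetric argument, or simply cite that this is precisely \cite[Theorem 1]{carlson1974generalization} and reproduce its short proof.
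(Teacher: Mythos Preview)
The paper does not supply its own proof of this lemma; it is stated with a citation to \cite{carlson1974generalization} and used as a black box. So there is no ``paper's proof'' to compare against, and your task is simply to produce a correct self-contained argument.

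Your approach via block Gaussian elimination is the standard one and is correct. The block computation you wrote down is right: with $X=\begin{pmatrix}I & -BD^\dagger\\ 0 & I\end{pmatrix}$ and $Y=\begin{pmatrix}I & 0\\ -D^\dagger C & I\end{pmatrix}$ one obtains (using $D^\dagger D D^\dagger = D^\dagger$)
\[
XMY=\begin{pmatrix} M/D & B(I-D^\dagger D)\\ (I-DD^\dagger)C & D\end{pmatrix},
\]
and $X,Y$ are unipotent hence $\rank(XMY)=\rank(M)$. Your projection argument then finishes it cleanly once two containments are checked. First, $\{0\}\oplus\mathrm{im}(D)\subseteq \mathrm{im}(XMY)$: for any $w$, set $v\coloneqq D^\dagger D w\in\mathrm{im}(D^\dagger D)=(\ker D)^\perp$; then $(I-D^\dagger D)v=0$ and $Dv=DD^\dagger Dw=Dw$, so $XMY\begin{pmatrix}0\\ v\end{pmatrix}=\begin{pmatrix}0\\ Dw\end{pmatrix}$. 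Second, $\pi(\mathrm{im}(XMY))\supseteq\mathrm{im}(M/D)$: for any $w$, the first block of $XMY\begin{pmatrix}w\\ 0\end{pmatrix}$ equals $(M/D)w$. With $V=\mathrm{im}(XMY)$ and $\pi$ the projection onto the top block, rank--nullity gives the \emph{equality} $\dim V=\dim\pi(V)+\dim(V\cap\ker\pi)$, and the two containments yield $\dim\pi(V)\geq\rank(M/D)$ and $\dim(V\cap\ker\pi)\geq\rank(D)$, hence $\rank(M)\geq\rank(M/D)+\rank(D)$.

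Your worry in the last paragraph is misplaced: you do not need $\pi(V)=\mathrm{im}(M/D)$, only $\pi(V)\supseteq\mathrm{im}(M/D)$, and since rank--nullity gives an equality (not merely $\geq$) for $\dim V$, the two lower bounds on the summands combine in the correct direction. Drop that paragraph and the proof is complete.
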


\begin{lemma}[Quotient Formula {\cite[Theorem 4]{carlson1974generalization}}]\label{lm:quotient formula}
Let $M,D$ and $H$ be square matrices with the following block structures:  
$M =\begin{pmatrix}
A &  B\\
C & D
\end{pmatrix}\text{ and }D =\begin{pmatrix}
E &  F\\
G & H
\end{pmatrix}.$
If $D$ is proper in $M$ and $H$ is proper in $D$, then $D/H$ is proper in $M/H$ and 
$M/D = (M/H)/(D/H).$
\end{lemma}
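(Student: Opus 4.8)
The plan is to prove the Quotient Formula (\Cref{lm:quotient formula}) by reducing to the case where all the relevant submatrices are invertible, and then to extend to the general (generalized inverse) case by a perturbation/continuity argument. First I would recall the classical nonsingular quotient formula: if $D$ and $H$ are invertible, then so is $D/H = E - FH^{-1}G$ (since $\det D = \det H \cdot \det(D/H)$), and a direct block-elimination computation — writing $M/D = A - (B_1\ B_2)\,D^{-1}\,(C_1\ ;\ C_2)$ and expanding $D^{-1}$ via the block-inverse formula in terms of $H^{-1}$ and $(D/H)^{-1}$ — yields $M/D = (M/H)/(D/H)$. This is the routine linear-algebra core, and I would not grind through it in detail; it is exactly \cite[Theorem 4]{carlson1974generalization} in the nonsingular setting, which we may cite.

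The substantive work is handling the \emph{generalized} inverses, and this is where the \emph{properness} hypotheses do the heavy lifting. The key structural fact I would use is that when $D$ is proper in $M$ (i.e. $\ker(D)\subseteq\ker(B)$ and $\ker(D^\T)\subseteq\ker(C^\T)$), the quantity $B D^\dagger C$ is insensitive to how one inverts $D$ on its kernel: precisely, $B D^\dagger C = B \tilde D C$ for \emph{any} matrix $\tilde D$ satisfying $D\tilde D D = D$ (a $\{1\}$-inverse), because the columns of $C$ lie in $\mathrm{im}(D^\T) = \ker(D)^\perp = \mathrm{row space}$ handled correctly by any such $\tilde D$, and symmetrically for the rows of $B$. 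Concretely I would write $C = D X$ and $B^\T = D^\T Y$ for suitable $X,Y$ (possible exactly because of properness), so that $B D^\dagger C = Y^\T D D^\dagger D X = Y^\T D X$, independent of the choice of inverse. With this, one is free to replace $D^\dagger$ and $H^\dagger$ by convenient $\{1\}$-inverses, or alternatively to perturb: replace $D$ by $D_\varepsilon := D + \varepsilon P$ where $P$ is the orthogonal projector onto $\ker(D)$ (and similarly $H_\varepsilon$), making everything invertible, apply the nonsingular case, and let $\varepsilon \to 0$, using properness to check that the extra $\varepsilon$-terms contribute nothing in the limit.

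Here is the order I would carry things out. (1) Establish the properness-invariance lemma: $B D^\dagger C = Y^\T D X$ for the $X, Y$ above, and note the identity $D D^\dagger C = C$, $B D^\dagger D = B$ hold under properness. (2) Show that $D/H$ is proper in $M/H$: one must verify $\ker(D/H)\subseteq\ker(B/H)$ and the transposed statement, where $B/H$ denotes the corresponding off-diagonal block of $M/H$; I would do this by expressing $M/H$ as an explicit block matrix with $(1,1)$-block $A - (\text{stuff})H^\dagger(\text{stuff})$, off-diagonal block $B_1 - (\text{stuff})H^\dagger G$, etc., and chasing kernels using step (1) and the given properness of $D$ in $M$ and $H$ in $D$. (3) With all properness in place, reduce to invertible $D, H$ via the projector perturbation $D_\varepsilon, H_\varepsilon$ (checking $H_\varepsilon$ is still proper in $D_\varepsilon$ and $D_\varepsilon$ proper in the correspondingly perturbed $M$), invoke the classical nonsingular identity, and pass to the limit $\varepsilon\to 0$; continuity of all the expressions in $\varepsilon$ (they are rational with nonvanishing denominators for small $\varepsilon>0$, and the limits exist precisely by the properness-invariance of step (1)) gives $M/D = (M/H)/(D/H)$.

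The main obstacle I anticipate is step (2): verifying that properness propagates to the Schur complement $M/H$, because $B/H$ and $C/H$ and $D/H$ are themselves built out of generalized inverses, and the kernel inclusions have to be teased out of the interplay between the two given properness hypotheses rather than from positive-semidefiniteness (which in our application, via \Cref{ex:psd-proper}, would make everything automatic — but the lemma is stated generally). I would therefore keep in mind the special case relevant to the paper, where $M$ is positive semi-definite: there one can write $M = EE^\T$, split $E$ into row-blocks matching $A, D, H$, and all Schur complements are again of the form $E' (E')^\T$ for an explicit $E'$ (a "partial Cholesky" step), which makes properness and the quotient identity transparent and sidesteps the perturbation argument entirely. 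If a fully general proof proves unwieldy, I would at minimum give the PSD argument in full and cite \cite{carlson1974generalization} for the general statement.
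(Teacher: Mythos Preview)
The paper does not prove \Cref{lm:quotient formula} at all: it is stated as a direct citation of \cite[Theorem~4]{carlson1974generalization}, with no argument given. Your proposal therefore goes well beyond what the paper does, and the plan you sketch---reduce to the invertible case via the classical block-inverse computation, then use the properness hypotheses to justify replacing Moore--Penrose inverses by arbitrary $\{1\}$-inverses (or a limiting procedure)---is essentially the strategy in Carlson's original paper. Your step~(1) is the right engine: writing $C = DX$ and $B^\T = D^\T Y$ under properness, so that $BD^\dagger C = Y^\T D X$ independently of the choice of generalized inverse, is precisely what makes the generalized quotient formula go through, and your identification of step~(2) (propagation of properness to $M/H$) as the substantive part is accurate.

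One genuine technical wrinkle in your perturbation variant: for a non-symmetric $D$, the perturbation $D_\varepsilon = D + \varepsilon P$ with $P$ the orthogonal projector onto $\ker(D)$ need \emph{not} be invertible (take $D = \begin{psmallmatrix}0&1\\0&0\end{psmallmatrix}$, so $P = \begin{psmallmatrix}1&0\\0&0\end{psmallmatrix}$ and $D_\varepsilon$ still has rank one). You would need instead to perturb along the singular-value decomposition, or---cleaner---abandon the limit entirely and work directly with $\{1\}$-inverses as in your step~(1), which already contains everything needed. Your fallback of proving only the positive semi-definite case (which is all the paper ever uses, via \Cref{ex:psd-proper}) and citing Carlson for the general statement is exactly what the paper effectively does, minus the PSD argument.
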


\begin{lemma}[Eigenvalue interlacing property]\label{lm:eigen-interlacing}
Let $M=W^{-1}PW$ be as in \Cref{lm:psd-proper}. Suppose that the size of $M$ is $n\times n$ and the size of $D$ is $d\times d$. Then, 
\begin{equation}\label{eq:interlacing}
    \lambda_k(M)\leq \lambda_k(M/(W_2^{-1}DW_2))\leq \lambda_k(W_1^{-1}AW_1),\quad \forall 1\leq k\leq n-d,
\end{equation}
where $\lambda_k(A)$ denotes the $k$-th smallest eigenvalue of $A$ (counted with multiplicity). 
\end{lemma}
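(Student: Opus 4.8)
The statement is the eigenvalue interlacing property for Schur complements of positive semi-definite matrices: for $M$ PSD with $D$ a $d\times d$ proper block, $\lambda_k(M)\leq\lambda_k(M/D)\leq\lambda_k(A)$ for all $1\leq k\leq n-d$. The plan is to reduce everything to the Courant--Fischer min-max characterization of eigenvalues and a clean factorization of the quadratic form $x^\T M x$ that exposes $M/D$. The key algebraic input is the identity
\begin{equation*}
\begin{pmatrix} u \\ v \end{pmatrix}^\T M \begin{pmatrix} u \\ v \end{pmatrix} = u^\T (M/D) u + (v + D^\dagger C u)^\T D (v + D^\dagger C u),
\end{equation*}
valid when $D$ is proper (so that $BD^\dagger D = B$ and $DD^\dagger C = C$, i.e. the generalized inverse behaves like a true inverse on the relevant subspaces). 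I would first establish this identity by direct expansion, using properness to cancel the cross terms, and noting that since $M$ is PSD the block $D$ is PSD as a principal submatrix, so $D^\dagger$ is also PSD and the second term on the right is $\geq 0$.

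\textbf{Upper bound $\lambda_k(M/D)\leq\lambda_k(A)$.} Here $A = M([n-d],[n-d])$ is itself a principal submatrix of $M/D$'s ``parent'' in the sense that $M/D$ and $A$ have the same size $(n-d)\times(n-d)$, and $M/D = A - BD^\dagger C$ with $BD^\dagger C$ PSD (again because $M$ PSD forces this; one can see it from $BD^\dagger C = E_1 E_2^\T (E_2 E_2^\T)^\dagger E_2 E_1^\T$ in the notation of \Cref{ex:psd-proper}, which is $E_1 P E_1^\T$ for an orthogonal projection $P$, hence PSD). So $M/D \preceq A$ in the Loewner order, and Weyl's monotonicity theorem gives $\lambda_k(M/D)\leq\lambda_k(A)$ for every $k$.

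\textbf{Lower bound $\lambda_k(M)\leq\lambda_k(M/D)$.} This is the more delicate direction. Using Courant--Fischer for the $k$-th \emph{smallest} eigenvalue, $\lambda_k(M/D) = \min_{\dim S = k}\ \max_{0\neq u\in S}\ \frac{u^\T(M/D)u}{u^\T u}$, where $S$ ranges over $k$-dimensional subspaces of $\R^{n-d}$. Given an optimal (or near-optimal) $S$ of dimension $k$ achieving $\lambda_k(M/D)$, I would lift it to a $k$-dimensional subspace $\widetilde S\subseteq\R^n$ by the graph-like embedding $u\mapsto (u,\ -D^\dagger C u)$; this is injective, so $\dim\widetilde S = k$, and by the factorization identity the second quadratic-form term vanishes on $\widetilde S$, giving $(u,-D^\dagger Cu)^\T M (u,-D^\dagger Cu) = u^\T(M/D)u$ while $\|(u,-D^\dagger Cu)\|^2 \geq \|u\|^2$. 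Hence $\max_{x\in\widetilde S}\frac{x^\T M x}{x^\T x}\leq \max_{u\in S}\frac{u^\T(M/D)u}{u^\T u} = \lambda_k(M/D)$, and since $\widetilde S$ is an admissible $k$-dimensional competitor in the min-max for $\lambda_k(M)$, we conclude $\lambda_k(M)\leq\lambda_k(M/D)$.

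\textbf{Main obstacle.} The routine expansions are harmless; the step requiring care is making sure the generalized-inverse manipulations are legitimate, i.e. that properness of $D$ in $M$ is exactly what licenses $B D^\dagger D = B$ and $D D^\dagger C = C$ (equivalently, $\mathrm{im}(C)\subseteq\mathrm{im}(D)$ and $\mathrm{im}(B^\T)\subseteq\mathrm{im}(D^\T)$, which for PSD $D$ both reduce to $\ker D\subseteq\ker B$, $\ker D^\T\subseteq\ker C^\T$). These identities are what force the cross terms in the factorization to collapse and what make $x\mapsto(u,-D^\dagger Cu)$ the ``right'' lift; once they are in hand the min-max argument is standard. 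I would therefore isolate the properness identities as a short preliminary observation before invoking Courant--Fischer and Weyl.
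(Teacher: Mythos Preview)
Your argument is correct and takes a genuinely different route from the paper's. The paper proceeds by citing \cite[Theorem 3.1]{fan2002schur} for the case where $D$ is non-singular, and then for singular $D$ runs a perturbation argument: replace $D$ by $D_\eps = D + \eps\sum_{i:\lambda_i=0}\varphi_i\varphi_i^\T$ (adding $\eps$ on the kernel), observe that properness forces $BD_\eps^{-1}C = BD^\dagger C$ so that $M_\eps/D_\eps = M/D$ exactly, apply the non-singular case to $M_\eps$, and let $\eps\to 0$ using continuity of eigenvalues.

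Your approach instead gives a self-contained variational proof that handles the singular case directly: the factorization identity (valid precisely under properness) yields both that $M/D$ is PSD and that the lift $u\mapsto(u,-D^\dagger Cu)$ is the right competitor for Courant--Fischer, while the upper bound drops out of $M/D = A - E_1PE_1^\T\preceq A$ and Weyl. This buys you a complete proof without the external citation or the limiting step; the paper's version is terser but leans on the cited result for the main content. Both arguments ultimately rest on the same algebraic fact---that properness makes $DD^\dagger C = C$ and $BD^\dagger D = B$---just deployed differently (you use it to collapse cross terms in the quadratic form; the paper uses it to show the perturbed Schur complement is unchanged).
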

See \Cref{sec:proofs from the paper} for proofs of \Cref{lm:psd-proper} and \Cref{lm:eigen-interlacing}.

 %%%%%%%%%%%%%%%%%%%%%%%%%%%%%%
\subsection{Up-persistent Laplacian as a Schur complement}\label{sec:persistent laplacian as schur complement}
For a simplicial pair $K\hookrightarrow L$, recall from \Cref{sec:first algo persistent Laplacian} that for each $q\in\mathbb N$ we assume an ordering $\bar{S}_q^L=\{[\sigma_i]\}_{i=1}^{n_q^L}$ on $\Bar{S}_q^L$  such that $\bar{S}_q^K=\{[\sigma_i]\}_{i=1}^{n_q^K}$. Given such orderings on canonical bases of $C_q^K$ and $C_q^L$, the matrix representation $\lap_{q,\mathrm{up}}^{K,L}$ of $\Delta_{q,\mathrm{up}}^{K,L}:C_q^K\rightarrow C_q^K$ is related to the matrix representation $\lap_{q,\mathrm{up}}^L$ of $\Delta_{q,\mathrm{up}}^L:C_q^L\rightarrow C_q^L$ via the Schur complement as follows:

\begin{theorem}[Up-persistent Laplacian as Schur complement]\label{thm:persis-Laplacian-schur-formula}
Let $K\hookrightarrow L$ be a simplicial pair. Assume that $n_q^K<n_q^L$ and let $I_K^L\coloneqq [n_q^L]\backslash[n_q^K]$. Then,
\begin{equation}\label{eq:schur complement up laplacian}
    \lap_{q,\mathrm{up}}^{K,L}=\lap_{q,\mathrm{up}}^L/\lap_{q,\mathrm{up}}^L\lc I_K^L,I_K^L\rc.
\end{equation}
\end{theorem}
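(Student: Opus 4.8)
The plan is to identify the matrix $\lap_{q,\mathrm{up}}^L$ with a Gram matrix built from the boundary operator $\partial_{q+1}^L$, and then recognize the persistent up-Laplacian as exactly the object obtained by restricting attention to those columns of $\partial_{q+1}^L$ whose support (under $\partial_{q+1}^L$) lies in $C_{q-1}^K$... more precisely, whose image under $\partial_{q+1}^L$ lies in $C_q^K$. Concretely, write $B \coloneqq B_{q+1}^L \in \R^{n_q^L \times n_{q+1}^L}$ for the boundary matrix. Since the canonical bases are orthonormal, $\lap_{q,\mathrm{up}}^L = B B^\T$. With the block decomposition of rows induced by $[n_q^L] = [n_q^K] \sqcup I_K^L$, write $B = \begin{pmatrix} B_1 \\ B_2 \end{pmatrix}$ with $B_1 \in \R^{n_q^K \times n_{q+1}^L}$ and $B_2 \in \R^{(n_q^L - n_q^K)\times n_{q+1}^L}$; note $B_2 = D_{q+1}^L$ in the notation of \Cref{lm:image in subspace}. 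Then
\[
\lap_{q,\mathrm{up}}^L = \begin{pmatrix} B_1 B_1^\T & B_1 B_2^\T \\ B_2 B_1^\T & B_2 B_2^\T \end{pmatrix},
\qquad
\lap_{q,\mathrm{up}}^L\lc I_K^L, I_K^L\rc = B_2 B_2^\T,
\]
so the Schur complement $\lap_{q,\mathrm{up}}^L / \lap_{q,\mathrm{up}}^L(I_K^L,I_K^L) = B_1 B_1^\T - B_1 B_2^\T (B_2 B_2^\T)^\dagger B_2 B_1^\T = B_1\bigl(I - B_2^\T(B_2B_2^\T)^\dagger B_2\bigr)B_1^\T$. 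The matrix $P \coloneqq I - B_2^\T(B_2B_2^\T)^\dagger B_2$ is precisely the orthogonal projection of $\R^{n_{q+1}^L}$ onto $\ker(B_2) = \{c \in C_{q+1}^L : \partial_{q+1}^L(c) \in C_q^K\} = C_{q+1}^{L,K}$ (using that $B_2 = D_{q+1}^L$ reads off exactly the $C_{q-1}^L/C_{q-1}^K$ complementary coordinates of $\partial_{q+1}^L(c)$, vanishing iff $c \in C_{q+1}^{L,K}$). Hence the Schur complement equals $B_1 P B_1^\T$.

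On the other side, I would compute $\lap_{q,\mathrm{up}}^{K,L}$ directly from \Cref{thm:basis-persistent-Laplcacian}, or even more cleanly from the basis-free definition. Pick an orthonormal basis of the subspace $C_{q+1}^{L,K} \subseteq C_{q+1}^L$, assembled as columns of a matrix $Z_0 \in \R^{n_{q+1}^L \times n_{q+1}^{L,K}}$ with $Z_0^\T Z_0 = I$; then $Z_0 Z_0^\T = P$. The operator $\partial_{q+1}^{L,K} : C_{q+1}^{L,K} \to C_q^K$ is the restriction of $\partial_{q+1}^L$, so in these bases its matrix is $B_1 Z_0$, and because the bases involved are orthonormal its adjoint has matrix $(B_1 Z_0)^\T = Z_0^\T B_1^\T$. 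Therefore
\[
\lap_{q,\mathrm{up}}^{K,L} = (B_1 Z_0)(B_1 Z_0)^\T = B_1 Z_0 Z_0^\T B_1^\T = B_1 P B_1^\T,
\]
which matches the Schur-complement expression computed above. (If one prefers to route through \Cref{thm:basis-persistent-Laplcacian} with a general, not-necessarily-orthonormal basis $Z$, the same identity follows since $Z(Z^\T Z)^{-1}Z^\T$ is again the orthogonal projection $P$ onto $\mathrm{col}(Z) = C_{q+1}^{L,K}$ — this is where basis-invariance, already asserted in \Cref{thm:basis-persistent-Laplcacian}, is used.) One should also remark that $\lap_{q,\mathrm{up}}^L$ is positive semidefinite, so by \Cref{ex:psd-proper} the block $\lap_{q,\mathrm{up}}^L(I_K^L,I_K^L)$ is proper, which makes the generalized Schur complement well-behaved (in particular this is consistent with \Cref{lm:eigen-interlacing} being applicable later).

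The main obstacle — really the only subtle point — is the identification $\ker(D_{q+1}^L) = C_{q+1}^{L,K}$ together with the claim that $P = I - B_2^\T(B_2B_2^\T)^\dagger B_2$ is the orthogonal projector onto that kernel. The kernel identification is a matter of carefully matching the row-selection $[n_q^L]\setminus[n_q^K]$ of $B_{q+1}^L$ with the coordinates of $C_{q-1}^L$ that must vanish for a chain's boundary to land in $C_{q-1}^K$ — wait, I must be careful with indices here: $B_{q+1}^L$ has rows indexed by $q$-simplices and columns by $(q+1)$-simplices, and the condition defining $C_{q+1}^{L,K}$ is $\partial_{q+1}^L(c) \in C_q^K$, i.e. the coordinates of $\partial_{q+1}^L(c)$ on $q$-simplices of $L$ not in $K$ all vanish; these are exactly the rows indexed by $I_K^L = [n_q^L]\setminus[n_q^K]$, so $c \in C_{q+1}^{L,K} \iff B_2 c = 0$. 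The projector claim is the standard fact that for any matrix $B_2$, $B_2^\T(B_2B_2^\T)^\dagger B_2$ is the orthogonal projection onto $\mathrm{row}(B_2) = \ker(B_2)^\perp$; I would cite this or verify it in one line via the SVD of $B_2$. Everything else is bookkeeping with block matrices and the fact that orthonormal canonical bases make adjoints into transposes.
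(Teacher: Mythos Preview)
Your proof is correct and, in spirit, follows the same block-Gram-matrix route as the paper: both write $\lap_{q,\mathrm{up}}^L = BB^\T$ with $B=\begin{pmatrix}B_1\\B_2\end{pmatrix}$, compute the Schur complement of the $B_2B_2^\T$ block, and identify the result with $\lap_{q,\mathrm{up}}^{K,L}$ via the subspace $C_{q+1}^{L,K}=\ker(B_2)$.

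The difference is in how that last identification is made. The paper routes through \Cref{lm:schur-complement-matrix-operation}, which takes a non-singular $Y=(Y_1\;Y_2)$ with $B_2Y_1=0$ and $B_2Y_2$ of full column rank (produced by the column reduction of \Cref{lm:image in subspace}), shows $M/M_{22}=B_1Y_1(Y_1^\T Y_1)^{-1}(B_1Y_1)^\T$, and then matches this with the formula of \Cref{thm:basis-persistent-Laplcacian}. You instead observe directly that $I-B_2^\T(B_2B_2^\T)^\dagger B_2$ is the orthogonal projector $P$ onto $\ker(B_2)$ and that $Z_0Z_0^\T=P$ for any orthonormal basis $Z_0$ of $C_{q+1}^{L,K}$. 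This is cleaner: it replaces the paper's dedicated lemma (whose proof passes through a QR factorization and a separate rank argument) by a one-line SVD fact about Moore--Penrose inverses. What the paper's detour buys is a tight connection to \Cref{algo-pers-lap}: the same $Y$ that witnesses the Schur-complement identity is the change-of-basis matrix the algorithm actually computes. Your argument trades that algorithmic link for brevity and transparency.
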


To prove the above theorem, we first need the following lemma (whose proof is given in \Cref{sec:proofs from the paper}) which relates Schur complements with a certain matrix operation.

\begin{lemma}\label{lm:schur-complement-matrix-operation}
Let $B\in\R^{n\times m}$ be a block matrix $B=\begin{pmatrix}
B_1\\
B_2
\end{pmatrix}$, where $B_1\in\R^{d\times m}$ for some $1\leq d<n$. Let $W_1\in \R^{d\times d}$ and $W_2\in\R^{(n-d)\times(n-d)}$ be non-singular diagonal matrices and let $W =\begin{pmatrix}
W_1 &  0\\
0 & W_2
\end{pmatrix}$. Let $M\coloneqq BB^\mathrm{T}W$, which is a block matrix 
\[M =\begin{pmatrix}
M_{11} &  M_{12}\\
M_{21} & M_{22} 
\end{pmatrix}=\begin{pmatrix}
B_1B_1^\T W_1 &  B_1B_2^\T W_2\\
B_2B_1^\T W_1 & B_2B_2^\T W_2 
\end{pmatrix}.\]
If $B_2$ has \emph{full column rank}, then $M/M_{22}=0$. Otherwise, for any non-singular block matrix $Y=\begin{pmatrix}
Y_1 & Y_2 \end{pmatrix}\in\R^{m\times m}$, if $B_2Y_1=0$ and $B_2Y_2$ has \emph{full column rank}, then
$M/M_{22}=B_1Y_1\lc Y_1^\mathrm{T}Y_1\rc^{-1}(B_1Y_1)^\mathrm{T}W_1.$
\end{lemma}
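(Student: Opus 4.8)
The plan is to prove \Cref{lm:schur-complement-matrix-operation} by directly computing the generalized Schur complement $M/M_{22}$ from the block structure $M = BB^\T$, using the key fact that for a matrix of the form $A A^\T$, the Moore–Penrose pseudoinverse and the column/row spaces are completely controlled by $A$. Concretely, I would first handle the easy case: if $B_2$ has full column rank, then $M_{22} = B_2 B_2^\T$ is invertible (it is $m\times m$ positive semi-definite with $\ker M_{22} = \ker B_2^\T$, but here $B_2$ surjective onto $\R^m$... wait — careful: $B_2 \in \R^{(n-d)\times m}$, full column rank means $\ker B_2 = 0$, hence $\ker M_{22} = \ker B_2^\T$ need not be trivial). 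The cleaner route: $M_{22}^\dagger = B_2^\dagger (B_2^\dagger)^\T$ via the identity $(B_2 B_2^\T)^\dagger = (B_2^\T)^\dagger B_2^\dagger$, and $B_2^\dagger B_2 = I_m$ precisely when $B_2$ has full column rank. Then $M/M_{22} = B_1 B_1^\T - B_1 B_2^\T (B_2 B_2^\T)^\dagger B_2 B_1^\T = B_1 B_1^\T - B_1 B_2^\dagger B_2 B_1^\T = B_1 B_1^\T - B_1 B_1^\T = 0$, using $B_2^\T (B_2 B_2^\T)^\dagger B_2 = B_2^\dagger B_2 = I_m$.

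For the general case, the idea is to use the non-singular change of basis $Y = (Y_1 \mid Y_2)$ on the column space of $B$ so that $B_2 Y_1 = 0$ and $B_2 Y_2$ has full column rank. Writing $B' \coloneqq B Y = \begin{pmatrix} B_1 Y_1 & B_1 Y_2 \\ 0 & B_2 Y_2 \end{pmatrix}$, I would exploit the fact that the Schur complement of the bottom-right block is invariant under right-multiplication of $B$ by a non-singular matrix: since $M = B B^\T$ and $M' \coloneqq B' (B')^\T = B Y Y^\T B^\T$, this is \emph{not} literally $M$, so I cannot invoke invariance of $M$ itself. Instead I would compute $M/M_{22}$ directly. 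I would use the block-column structure of $B Y$ together with the identity relating Schur complements and pseudoinverses of Gram matrices. The cleanest approach: observe $M_{22} = B_2 B_2^\T = B_2 Y_2 (B_2 Y_2)^\T \cdot (\text{after accounting for } Y)$ — more precisely, since $Y$ is non-singular, $B_2 = B_2 Y Y^{-1} = B_2 Y_2 (Y^{-1})_{2\cdot}$ where $(Y^{-1})_{2\cdot}$ is the lower block of rows of $Y^{-1}$; this lets me express $M_{22}^\dagger$. Alternatively — and this is the route I'd favor — I would directly verify the claimed formula by checking that $M/M_{22}$ equals the orthogonal-projection expression. Note $B_1 Y_1 (Y_1^\T Y_1)^{-1} (B_1 Y_1)^\T$ is $B_1 P B_1^\T$ where $P \coloneqq Y_1 (Y_1^\T Y_1)^{-1} Y_1^\T$ is the orthogonal projection onto $\mathrm{col}(Y_1) = \ker(B_2)$ (using $B_2 Y_1 = 0$ and $B_2 Y_2$ full column rank together with $Y$ non-singular, which forces $\mathrm{col}(Y_1) = \ker B_2$ and $Y_1$ has full column rank so $Y_1^\T Y_1$ is invertible). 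So the target identity is $M/M_{22} = B_1 P B_1^\T$ where $P$ projects onto $\ker B_2$.

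The key computation is then: $M/M_{22} = B_1 B_1^\T - B_1 B_2^\T (B_2 B_2^\T)^\dagger B_2 B_1^\T = B_1 \bigl( I - B_2^\T (B_2 B_2^\T)^\dagger B_2 \bigr) B_1^\T$. Now $B_2^\T (B_2 B_2^\T)^\dagger B_2 = B_2^\dagger B_2$ is precisely the orthogonal projection onto $\mathrm{row}(B_2) = (\ker B_2)^\perp$ — this is a standard pseudoinverse identity. Hence $I - B_2^\dagger B_2$ is the orthogonal projection onto $\ker B_2$, which is exactly $P$. Therefore $M/M_{22} = B_1 P B_1^\T = B_1 Y_1 (Y_1^\T Y_1)^{-1} (B_1 Y_1)^\T$, completing the proof; the full-column-rank case is subsumed as the degenerate case $P = 0$. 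The main obstacle is bookkeeping the pseudoinverse identities carefully: specifically establishing $B_2^\T(B_2B_2^\T)^\dagger B_2 = B_2^\dagger B_2 = $ orthogonal projection onto $(\ker B_2)^\perp$, and verifying that $\mathrm{col}(Y_1)$ is exactly $\ker B_2$ (not just contained in it) — this follows because $Y$ non-singular and $B_2 Y_2$ full column rank imply $\dim\ker B_2 = m - \rank B_2 = m - (\text{number of columns of } Y_2) = \text{number of columns of } Y_1 = \dim \mathrm{col}(Y_1)$, combined with $\mathrm{col}(Y_1) \subseteq \ker B_2$. Once these two facts are in hand, the rest is a one-line matrix manipulation.
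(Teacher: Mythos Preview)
Your proof is correct and takes a genuinely different, cleaner route than the paper.

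The paper proceeds in two stages. For the full-column-rank case it invokes the rank inequality $\rank(M)\geq\rank(M_{22})+\rank(M/M_{22})$ (the cited Carlson result, \Cref{lm:schur-complement-rank}) to force $\rank(M/M_{22})=0$. For the general case it first assumes $Y$ is orthonormal, expands $M=BYY^\T B^\T$ in block form, peels off the term $B_1Y_1Y_1^\T B_1^\T$, and then applies the same rank argument a second time to kill the residual block. Finally it reduces non-orthonormal $Y$ to the orthonormal case via a QR factorization.

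Your argument is a single computation: factor the Schur complement as
\[
M/M_{22}=B_1\bigl(I-B_2^\T(B_2B_2^\T)^\dagger B_2\bigr)B_1^\T,
\]
recognize $B_2^\T(B_2B_2^\T)^\dagger B_2=B_2^\dagger B_2$ as the orthogonal projection onto $(\ker B_2)^\perp$, so that the parenthesized factor is the orthogonal projection $P$ onto $\ker B_2$; then use the hypotheses on $Y$ together with a dimension count to identify $\mathrm{col}(Y_1)=\ker B_2$, whence $P=Y_1(Y_1^\T Y_1)^{-1}Y_1^\T$. This handles both cases at once (full column rank is just $P=0$), avoids the rank lemma and the QR reduction entirely, and makes the geometric content of the formula transparent: $M/M_{22}$ is exactly $B_1$ restricted to $\ker B_2$, Gram-matrix style. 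The paper's approach has the minor advantage of staying within tools it has already set up, but yours is shorter and more conceptual.
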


\begin{proof}[Proof of \Cref{thm:persis-Laplacian-schur-formula}]
Let $B\coloneqq B_{q+1}^L\lc W_{q+1}^L\rc^\frac{1}{2}$, $W\coloneqq W_q^L$ and $W_1\coloneqq W([n_q^K],[n_q^K])=W_q^K$. Set $B_1\coloneqq B\lc[n_q^K],:\rc$ and $B_2\coloneqq B\lc[n_q^L]\backslash[n_q^K],:\rc$. Then, $B=\begin{pmatrix}
B_1\\
B_2
\end{pmatrix}$. Note that $B_2=D_{q+1}^L\lc W_{q+1}^L\rc^\frac{1}{2}$ using notations in \Cref{lm:image in subspace}. By \Cref{lm:image in subspace}, there exists a non-singular matrix $\hat{Y}\in\R^{n_{q+1}^L\times n_{q+1}^L}$ such that $R_{q+1}^L\coloneqq D_{q+1}^L\hat{Y}$ is column reduced. Let $Y\coloneqq \lc W_{q+1}^L\rc^{-\frac{1}{2}}\hat{Y}$, which is still non-singular. Then, 
\[R_{q+1}^L= D_{q+1}^L\hat{Y}=D_{q+1}^L \lc W_{q+1}^L\rc^{\frac{1}{2}}\lc W_{q+1}^L\rc^{-\frac{1}{2}}\hat{Y}=B_2Y.\]  
Let $I\subseteq[n_{q+1}^L]$ be the index set of $0$ columns of $R_{q+1}^L$. If $I=\emptyset$, then by \Cref{lm:image in subspace} we have that $C_{q+1}^{L,K}=\{0\}$ and thus $\lap_{q,\mathrm{up}}^{K,L}=0$. On the other hand, $I=\emptyset$ implies that $B_2$ has full column rank. Let $M\coloneqq BB^\T W$. Then, we have that
\[M =B_{q+1}^L W_{q+1}^L\lc B_{q+1}^L\rc^\T W_q^L =\lap_{q,\mathrm{up}}^L\]
and thus $M_{22}=\lap_{q,\mathrm{up}}^L\lc  I_K^L,I_K^L\rc$. Then by \Cref{lm:schur-complement-matrix-operation}, we have that 
\[\lap_{q,\mathrm{up}}^L/\lap_{q,\mathrm{up}}^L\lc I_K^L,I_K^L\rc=M/M_{22}=0=\lap_{q,\mathrm{up}}^{K,L}.\]

Now, we assume that $I\neq\emptyset$. Without loss of generality, we assume that $I=[n_{q+1}^{L,K}]\subseteq[n_{q+1}^L]$ (otherwise we multiply $Y$ by a permutation matrix). Let $Y_1\coloneqq Y\lc:,I\rc=\lc W_{q+1}^L\rc^{-\frac{1}{2}}Z$ where $Z$ is a column matrix representing a basis of $C_{q+1}^{L,K}$ (cf. \Cref{lm:image in subspace}). Let $Y_2\coloneqq Y\lc:,[n_{q+1}^L]\backslash I\rc$. Then, $Y=\begin{pmatrix}Y_1& Y_2\end{pmatrix}$ is a block matrix such that $B_2Y_1=R_{q+1}^L(:,I)=0$ and that $B_2Y_2=R_{q+1}^L\lc:,[n_{q+1}^L]\backslash I\rc$ has full column rank. Then, by \Cref{lm:schur-complement-matrix-operation}, we have that 
\begin{align*}
    &\lap_{q,\mathrm{up}}^L/\lap_{q,\mathrm{up}}^L\lc I_K^L,I_K^L\rc=M/M_{22}=B_1Y_1\lc Y_1^\mathrm{T}Y_1\rc^{-1}(B_1Y_1)^\mathrm{T}W_1\\
    =&B_1\lc W_{q+1}^L\rc^{-\frac{1}{2}}Z\lc \lc \lc W_{q+1}^L\rc^{-\frac{1}{2}}Z\rc^\mathrm{T}\lc W_{q+1}^L\rc^{-\frac{1}{2}}Z\rc^{-1}\lc B_1\lc W_{q+1}^L\rc^{-\frac{1}{2}}Z\rc^\mathrm{T}W_q^K\\
    =&B_1\lc W_{q+1}^L\rc^{-\frac{1}{2}}Z\lc Z^\T\lc W_{q+1}^L\rc^{-1}Z\rc^{-1}\lc B_1\lc W_{q+1}^L\rc^{-\frac{1}{2}}Z\rc^\mathrm{T}W_q^K.
\end{align*}
Note also that $B_{q+1}^{L,K}=B_{q+1}^L([n_q^K],:)Z = B_1\lc W_{q+1}^L\rc^{-\frac{1}{2}}Z$. Then, by \Cref{lm:image in subspace} we have that 
\[\lap_{q,\mathrm{up}}^{K,L}=B_{q+1}^{L,K}\lc Z^\T\lc W_{q+1}^L\rc^{-1}Z\rc^{-1}\left( B_{q+1}^{L,K}\right)^\mathrm{T}W_q^K=\lap_{q,\mathrm{up}}^L/\lap_{q,\mathrm{up}}^L\lc I_K^L,I_K^L\rc.\]
This finishes the proof of \Cref{thm:persis-Laplacian-schur-formula}. 
\end{proof}

%%%%%%%%%%%%%%%%%%%%%
\subsection{Fast computation of the matrix representation of $\Delta_q^{K,L}$}\label{sec:fast computation of persistent laplacian}
For a simplicial pair $K\hookrightarrow L$, by \Cref{thm:persis-Laplacian-schur-formula}, we now simply
compute $\lap_{q,\mathrm{up}}^{K,L}$ via \Cref{eq:schur complement up laplacian} using only Schur complement computations, which then give us $\lap_{q}^{K,L}=\lap_{q,\mathrm{up}}^{K,L}+\lap_{q,\mathrm{down}}^{K}$.
A pseudocode for this simple algorithm is given in \Cref{algo-pers-lap-schur}.

\begin{algorithm}[H]
 \caption{Persistent Laplacian: matrix representation via Schur complement}
 \label{algo-pers-lap-schur}
 \begin{algorithmic}[1]
 \STATE \textbf{Data:} $B_q^K,B_{q+1}^L,W_{q-1}^K,W_q^K,W_q^L$ and $W_{q+1}^L$
 \STATE \textbf{Result:} $\lap_q^{K,L}$
  \STATE Compute $\lap_{q,\mathrm{down}}^K$ from $B_q^K,W_{q-1}^K$ and $W_q^K$
 \STATE Compute $\lap_{q,\mathrm{up}}^L$ from $B_{q+1}^L,W_{q}^L$ and $W_{q+1}^L$
 \IF{$n_q^K==n_q^L$}
 \RETURN $ \lap_{q,\mathrm{up}}^L+\lap_{q,\mathrm{down}}^K$
 \ENDIF
 \STATE{$\lap_{q,\mathrm{up}}^{K,L}=\lap_{q,\mathrm{up}}^L/\lap_{q,\mathrm{up}}^L\lc I_K^L,I_K^L\rc$}
 \STATE{\RETURN $\lap_{q,\mathrm{up}}^{K,L}+\lap_{q,\mathrm{down}}^K$}
\end{algorithmic}
\end{algorithm}

\paragraph{Time complexity} Computing $\lap_{q,\mathrm{up}}^L$ takes time $O\left(n_{q+1}^L\right)$ and computing $\lap_{q,\mathrm{down}}^K$ takes $O\left(\lc n_{q}^K\rc^2\right)$ (see \Cref{sec:computation of up and down} for details).  The Schur complement $\lap_{q,\mathrm{up}}^L/\lap_{q,\mathrm{up}}^L\lc I_K^L,I_K^L\rc$ takes time 
$$O\lc\lc n_q^K\rc ^2+\lc n_q^L-n_q^K\rc^3+n_q^K\lc n_q^L-n_q^K\rc^2\rc=O\lc\lc n_q^L\rc^3\rc$$
to compute. Hence the total time complexity of computing $\lap_{q}^{K,L}$ via \Cref{eq:schur complement up laplacian} is $O\left(\lc n_q^L\rc^3+n_{q+1}^L \right)$, which is more efficient than the complexity of \Cref{algo-pers-lap}, $O\left(n_q^L\lc n_{q+1}^L\rc^2+\lc n_{q+1}^L\rc^3+\lc n_{q}^K\rc^2\right)$, when $n_q^L= O(n_{q+1}^L)$. 
By using fast matrix multiplication algorithm (which takes $O(r^\omega)$, $\omega < 2.373$, to multiply two $r\times r$ matrices), this time complexity can be improved to $O\left(\lc n_q^L\rc^\omega+n_{q+1}^L\right)$. 

\paragraph{Computation of persistent Betti numbers}\label{betti number complexity} By \Cref{thm:pers-betti-pers-lap}, we can compute the persistent Betti number $\beta_q^{K,L}$ in the following manner: we first compute $\lap_q^{K,L}$ and then compute $\beta_q^{K,L}=\mathrm{nullity}\lc \lap_q^{K,L}\rc$. 
Since calculating the nullity of an $n_q^K\times n_q^K$ square matrix can be done in time $O\lc \lc n_q^K\rc^\omega\rc = O\lc \lc n_q^L\rc^\omega\rc$, we obtain a method for computing the persistent Betti number in time $O\left(\lc n_q^L\rc^\omega +n_{q+1}^L\right)$ (which is $O\left(\lc n_q^L\rc^\omega\right)$ if $n_q^L = O\lc n_{q+1}^L\rc$). 
Currently, the existing approach in the literature to compute the persistent Betti numbers is through computing the persistent homology of the pair $K\hookrightarrow L$ using boundary matrices $B_{q+1}^L$ and $B_q^K$, which can be done in $O\left(\lc n_{q}^L\rc^2n_{q+1}^L+\lc n_{q-1}^K\rc^2n_{q}^K \right)$ time or in $O\left(\lc n_{q}^L\rc^{\omega-1}n_{q+1}^L+\lc n_{q-1}^K\rc^{\omega-1}n_{q}^K\right)$ (if we assume that $n_q^L = O(n_{q+1}^{L})$ and $n_{q-1}^K=O\lc n_q^K\rc$) using earliest basis (via fast matrix multiplication) approach \cite{BCCDW12}. 
Our new algebraic formulation of persistent Laplacian (via Schur complement) thus also leads to a faster algorithm to compute the persistent Betti number for a pair of spaces {\bf{for the setting when {$n_q^L = O(n_{q+1}^{L})$}}}. Note that the condition $n_q^L = O(n_{q+1}^L)$ holds in many practical scenarios, especially for the popular Rips or \v{C}ech complexes and their variants.
Given that this new algorithm is fundamentally different from existing ones (using only simple Schur complement computations), we believe that this is {of independent interest.} 

\begin{remark}
A MATLAB implementation of \Cref{algo-pers-lap-schur} for unweighted %(i.e., $w^L\equiv 1$)
simplicial pairs is given in \cite{memoli2021github}. A recent preprint \cite{wang2020hermes} by some of the authors of \cite{wang2020persistent} describes an alternative software implementation of the persistent Laplacian which is available at \cite{wang2020github}.
\end{remark}

%%%%%%%%%%%%%%%%%%%%%%%%
\subsection{Relationship with the notion of effective resistance}\label{sec:relation with effective resistance}
Let $K=(V^K,E^K,w^K)$ be a connected weighted graph. Unless otherwise specified, for any weighted graph considered in this section, we assume that $w^K$ satisfies that $w^K_0=w^K|_{S_0^K}\equiv 1$, i.e., the vertices of the graph are unweighted. For any two vertices $v,w\in V^K$, we let $\partial_{[v,w]}\coloneqq-[v]+[w]\in C_0^K$. Let $D_{[v,w]}^K\coloneqq\chi_w-\chi_v\in \R^{n_0^K}$ denote the vector representation of $\partial_{[v,w]}$ in $C_0^K$, where $\chi_v\in \R^{n_0^K}$ is the indicator vector of $v\in V^K$. We consider that each edge $e\in E^K$ has an \emph{electrical conductance} $w^K(e)$. Then, the \emph{effective resistance} $\mathfrak{R}_{v,w}^K$ between $v$ and $w$ is defined by 
\begin{equation}\label{eq:effective resistance graph}
    \mathfrak{R}_{v,w}^K\coloneqq \lc D_{[v,w]}^K\rc^\T\lc\lap_0^K\rc^\dagger D_{[v,w]}^K.
\end{equation}

Given a graph pair $K\hookrightarrow L$, by \Cref{thm:persis-Laplacian-schur-formula} the persistent Laplacian $\lap_0^{K,L}$ turns out to be the graph Laplacian of a new weighted graph.

\begin{proposition}[{\cite[Lemma 2.1]{dorfler2012kron}}]\label{prop:persistent graph is a graph}
Suppose that $K\hookrightarrow L$ is a graph pair. Assume that $L$ is connected and $w^L_0\equiv 1$. Then, $\lap_{0}^{K,L}=\lap_{0,\mathrm{up}}^{K,L}$ is the graph Laplacian $\lap_0^{\tilde{K}}$ of a connected weighted graph $\tilde{K}=\lc V^{\tilde{K}},E^{\tilde{K}},w^{\tilde{K}}\rc$ such that $V^{\tilde{K}}=V^K$.
\end{proposition}

$\tilde{K}$ is known as the \emph{Kron reduction} of $L$ and $\lap_0^{\tilde{K}}$ is called the \emph{Kron-reduced matrix}. The Kron reduction \cite{kron1939tensor} has been used in network circuit theory, {and it preserves effective resistance (cf. \cite[Theorem 3.8]{dorfler2012kron}). } 
This in turn implies that the persistent Laplacian $\lap_0^{K,L}$ 
is able to recover the  effective resistance $\mathfrak{R}_{v,w}^L$ w.r.t. the larger graph $L$ for all pairs of vertices $v, w\in K$. {The result below follows from \Cref{thm:persis-Laplacian-schur-formula} and \cite[Theorem 3.8]{dorfler2012kron}.}

\begin{theorem}\label{thm:effective-persistent-preserve}
Let $K\hookrightarrow L$ be a graph pair where $L$ is connected. Let $\tilde{K}=(V^K,E^{\tilde{K}},w^{\tilde{K}})$ denote the weighted graph such that $\lap_0^{\tilde{K}}=\lap_0^{K,L}$. Then, $\tilde{K}$ is connected and for two distinct vertices $v,w\in V^K$, we have that $\mathfrak{R}_{v,w}^L=\mathfrak{R}_{v,w}^{\tilde{K}}.$
\end{theorem}

\begin{remark}[Higher dimensional generalization]
The effective resistance has been generalized to the case of simplicial complexes in \cite{kook2018simplicial}. In \Cref{sec:kron reduction}, we show a higher-dimensional extension of \Cref{thm:effective-persistent-preserve}, i.e., that higher dimensional effective resistances are preserved by the up persistent Laplacian, the proof of which deals with the subtleties of the Moore-Penrose generalized inverse directly without resorting to a limiting argument as in the proof of \cite[Theorem 3.8]{dorfler2012kron}. In addition, we provide an example illustrating the impossibility of a higher dimensional generalization of the Kron reduction in the current simplicial setting. 
\end{remark}

The following result controls the change of degrees after applying the Kron reduction. 

\begin{proposition}\label{prop:persistent degree}
Let $\tilde{K}$ be the graph described in \Cref{prop:persistent graph is a graph}. Then, for any $v\in V^K=V^{\tilde{K}}$, we have that $\mathrm{deg}^{\tilde{K}}(v)\leq \mathrm{deg}^L(v)$, where $\mathrm{deg}^G(v)\coloneqq\sum_{w\in G}w^G(\{v,w\})$ is the weighted degree of a vertex in a graph $G$.
\end{proposition}
\begin{proof}
We first observe that $\mathrm{deg}^G(v)\coloneqq\sum_{w\in G}w^G(\{v,w\})=\lc\chi_v^G\rc^\mathrm{T}\lap_0^G \chi_v$. Therefore,
\begin{align*}
    \mathrm{deg}^{\tilde{K}}(v)&=\lc\chi_v^K\rc^\T \lap_0^{\tilde{K}} \chi_v^{\tilde{K}}=\lc\chi_v^{\tilde{K}}\rc^\T \lap_0^{K,L} \chi_v^{\tilde{K}}=\lc\chi_v^{\tilde{K}}\rc^\T \lap_0^{L}/\lap_0^{L}(I_K^L,I_K^L) \chi_v^{\tilde{K}}\\
    &=\lc\chi_v^{\tilde{K}}\rc^\T \lap_0^{L}\lc[n_0^K],[n_0^K]\rc\chi_v^{\tilde{K}}-\lc\chi_v^{\tilde{K}}\rc^\T\lap_0^L\lc[n_0^K],I_K^L\rc\lc\lap_0^{L}\lc I_K^L,I_K^L\rc\rc^\dagger\lap_0^L\lc I_K^L,[n_0^K]\rc \chi_v^{\tilde{K}}\\
    &\leq \lc\chi_v^{\tilde{K}}\rc^\T \lap_0^{L}\lc[n_0^K],[n_0^K]\rc\chi_v^{\tilde{K}}= \lc\chi_v^L\rc^\T \lap_0^L \chi_v^L=\mathrm{deg}^L(v)
\end{align*}
\end{proof}

In the case when $K$ consists of only two points in $L$, we have the following explicit relation between the persistent Laplacian and the effective resistance.
\begin{corollary}\label{coro:2point-persistent}
Let $L$ be a connected graph and let $K$ be a two-vertex subgraph with vertex set $V^K=\{v,w\}$. Then, $\lap_0^{K,L}=\begin{pmatrix}
\frac{1}{\mathfrak{R}_{v,w}^L} &  -\frac{1}{\mathfrak{R}_{v,w}^L}\\
-\frac{1}{\mathfrak{R}_{v,w}^L} & \frac{1}{\mathfrak{R}_{v,w}^L} 
\end{pmatrix}$.
\end{corollary}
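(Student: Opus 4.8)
The plan is to specialize \Cref{thm:effective-persistent-preserve} to the two-vertex case and compute the resulting $2\times 2$ matrix directly. First I would record the general structure of $\lap_0^{K,L}$ when $|V_K|=2$: it is a symmetric $2\times 2$ positive semi-definite matrix, and by \Cref{thm:connected component} (since $K$ intersects exactly one connected component of $L$, namely all of $L$ as $L$ is connected) its nullspace is exactly one-dimensional, spanned by the all-ones vector $(1,1)^\T$. Indeed, for $q=0$ one always has $\partial_0^K=0$, so $\lap_0^{K,L}=\Delta_{0,\mathrm{up}}^{K,L}$ is an image of $\partial_1^{L,K}\circ(\partial_1^{L,K})^*$, and the all-ones vector is orthogonal to the image of any boundary map. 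Combining symmetry, positive semi-definiteness, and the fact that rows sum to zero forces $\lap_0^{K,L}=\begin{pmatrix} a & -a\\ -a & a\end{pmatrix}$ for some $a\geq 0$. So the only thing left is to identify $a$.

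Next I would pin down $a$ via the effective resistance formula. Take $v=v_1$, $w=v_2$ in the chosen ordering, so $D_{[v,w]}^K=(-1,1)^\T\in\R^2$. Plugging into \Cref{thm:effective-persistent-preserve} gives
\[
R_{v,w}^L=\lc D_{[v,w]}^K\rc^\T\lc\lap_0^{K,L}\rc^\dagger D_{[v,w]}^K.
\]
Now $\lc\lap_0^{K,L}\rc^\dagger$ is itself of the form $\begin{pmatrix} b & -b\\ -b & b\end{pmatrix}$ — the Moore--Penrose pseudoinverse of a symmetric matrix with the same column space and row space, so it kills $(1,1)^\T$ and acts on the orthogonal complement spanned by $(-1,1)^\T$. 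On that one-dimensional subspace, $\lap_0^{K,L}$ acts as multiplication by $2a$ (since $\lap_0^{K,L}(-1,1)^\T=(-2a,2a)^\T=2a\,(-1,1)^\T$), hence its pseudoinverse acts as multiplication by $\tfrac{1}{2a}$ there, giving $b=\tfrac{1}{4a}$. Therefore
\[
R_{v,w}^L=(-1,1)\begin{pmatrix} b & -b\\ -b & b\end{pmatrix}\begin{pmatrix}-1\\ 1\end{pmatrix}=4b=\frac{1}{a},
\]
so $a=\tfrac{1}{R_{v,w}^L}$, which is exactly the claimed matrix.

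The one subtlety to handle carefully is the degenerate possibility $a=0$, i.e. $\lap_0^{K,L}=0$. This happens precisely when $C_1^{L,K}=\{0\}$ (cf. \Cref{ex:trivial cases}), which would mean there is no $1$-chain in $L$ whose boundary is supported on $\{v,w\}$ — but since $L$ is connected there is a path from $v$ to $w$, and the associated $1$-chain has boundary $-[v]+[w]\in C_0^K$, so $C_1^{L,K}\neq\{0\}$ and $a>0$. This also matches the fact that $R_{v,w}^L$ is finite and positive for a connected graph, so no division-by-zero issue arises. The main (very mild) obstacle is just being careful about the pseudoinverse bookkeeping on the rank-one positive part; everything else is a direct substitution into \Cref{thm:effective-persistent-preserve}. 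Alternatively, one could bypass the pseudoinverse computation entirely by instead invoking \Cref{coro:2point-persistent}'s natural companion, the Kron-reduction description from \Cref{sec:kron reduction}: Kron-reducing $L$ onto $\{v,w\}$ produces a two-node electrical network whose single edge has conductance $1/R_{v,w}^L$, and the Laplacian of a single weighted edge of conductance $c$ is exactly $\begin{pmatrix} c & -c\\ -c & c\end{pmatrix}$. I would present the pseudoinverse computation as the main argument since it relies only on results already proved in the excerpt, and mention the Kron-reduction viewpoint as a remark.
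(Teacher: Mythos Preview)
Your proof is correct and follows essentially the same route as the paper's: both arguments reduce to knowing that $\lap_0^{K,L}$ (equivalently its pseudoinverse) has the rank-one form $\begin{pmatrix} a & -a\\ -a & a\end{pmatrix}$ and then invoke \Cref{thm:effective-persistent-preserve} to pin down the scalar via $R_{v,w}^L$. The paper simply asserts $\lc\lap_0^{K,L}\rc^\dagger=\begin{pmatrix} R_{v,w}^L/4 & -R_{v,w}^L/4\\ -R_{v,w}^L/4 & R_{v,w}^L/4\end{pmatrix}$ and inverts, whereas you spell out the structure via \Cref{thm:connected component} and the eigenvector computation; one small quibble is that $a=0$ is equivalent to $\partial_1^{L,K}=0$, not to $C_1^{L,K}=\{0\}$, but your path argument (and already your appeal to \Cref{thm:connected component}) correctly rules this out anyway.
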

\begin{proof}[Proof of \Cref{coro:2point-persistent}]
By \Cref{thm:effective-persistent-preserve} (or by \cite[Lemma 3.10]{dorfler2012kron}), it is easy to show that 
$\lc\lap_0^{K,L}\rc^\dagger=\begin{pmatrix}
\frac{\mathfrak{R}_{v,w}^L}{4} &  -\frac{\mathfrak{R}_{v,w}^L}{4}\\
-\frac{\mathfrak{R}_{v,w}^L}{4} & \frac{\mathfrak{R}_{v,w}^L}{4} 
\end{pmatrix}.$ Therefore, $\lap_0^{K,L}=\begin{pmatrix}
\frac{1}{\mathfrak{R}_{v,w}^L} &  -\frac{1}{\mathfrak{R}_{v,w}^L}\\
-\frac{1}{\mathfrak{R}_{v,w}^L} & \frac{1}{\mathfrak{R}_{v,w}^L} 
\end{pmatrix}.$
\end{proof}

%%%%%%%%%%%%%%%%%%%%%%%%%%%%%%%%%%%%
\subsubsection{Effective resistance between disjoint sets}
The effective resistance between two vertices has been generalized to the case of two disjoint sets of vertices in \cite[Exercise 2.13]{lyons2017probability} via an energy minimization process. In \cite{song2019extension}, a formula invoking the graph Laplacian was used to define the effective resistance between disjoint sets. The two definitions are equivalent (see \Cref{sec: energy formulation} for a proof) and in this section we adopt the definition from \cite{song2019extension}.

%%%%%%%%%%%%%%%%%
\begin{figure}
    \centering
    \includegraphics[width=0.7\linewidth]{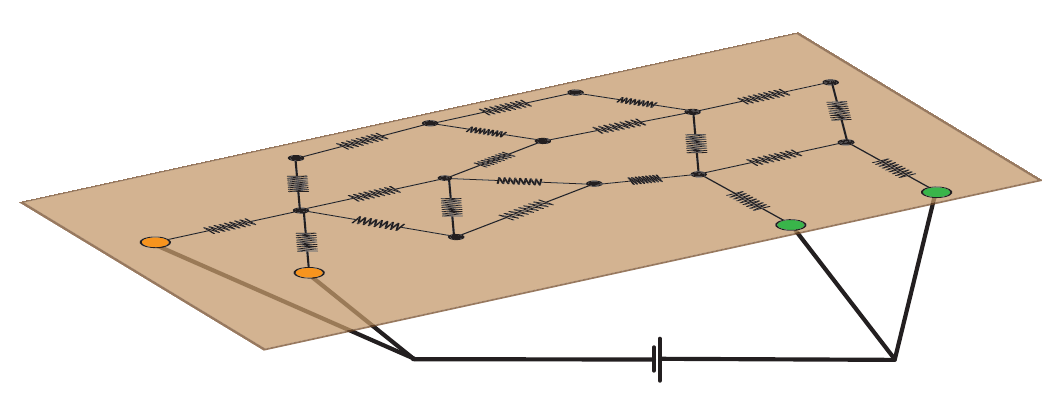}
    \caption{The effective resistance between two sets of vertices from the circuit theory perspective. One set is represented by the orange nodes, the other onbe by the green nodes. See \Cref{sec: energy formulation} for more details.}
    \label{fig:res-sets}
\end{figure}
%%%%%%%%%%%%%%%%

Let $K$ be a connected weighted graph. For any non-empty disjoint subsets $A,B\subseteq V^K$, let $V^J\coloneqq A\cup B$ and let $J$ be the induced subgraph with vertex set $V^J$. Then, following \cite{song2019extension}, the effective resistance $\mathfrak{R}^K_{A,B}$ between $A$ and $B$ is defined as follows
\begin{equation}\label{eq:resistance btw sets}
    \mathfrak{R}^K_{A,B}\coloneqq \lc\lc\chi_A^{J}\rc^\mathrm{T}\cdot \lap_0^{K}/\lap_0^K\lc V^J,V^J\rc\cdot\chi_A^J\rc^{-1},
\end{equation}
where $\lap_0^K\lc V^J,V^J\rc$\footnote{$\lap_0^K\lc V^J,V^J\rc$ was required to be non-singular in \cite{song2019extension}. This holds automatically as long as $K$ is connected; see \cite[Lemma 2.1]{dorfler2012kron}.} denotes the submatrix of $\lap_0^K$ with rows and columns indexed by $V^J$ and $\chi_A^J\in\R^{n_0^J}$ denotes the indicator vector of $A\subseteq V^J$. By \Cref{thm:persis-Laplacian-schur-formula}, we have that $\mathfrak{R}^K_{A,B}=\lc\lc\chi_A^{J}\rc^\mathrm{T}\cdot \lap_0^{J,K}\cdot\chi_A^J\rc^{-1}$. In particular, when $A\cup B=V^J=V^K$, $\mathfrak{R}^K_{A,B}= \lc\lc\chi_A^K\rc^\mathrm{T}\cdot \lap_0^K\cdot\chi_A^K\rc^{-1}$. We call $\mathfrak{C}^K_{A,B}\coloneqq \frac{1}{\mathfrak{R}^K_{A,B}}$ the \emph{effective conductance} between $A$ and $B$.

\begin{remark}\label{rmk:resistance between sets}
{Note that: (a) When $A=\{v\}$ and $B=\{w\}$ are two singleton sets, it is easy to see that $\mathfrak{R}^K_{A,B}=\mathfrak{R}^K_{v,w}$. (b) \Cref{eq:resistance btw sets} might seem asymmetric with respect to $A$ and $B$. In fact, we have that $\mathfrak{R}^K_{A,B}=- \lc\lc\chi_A^{J}\rc^\mathrm{T}\cdot \lap_0^{J,K}\cdot\chi_B^J\rc^{-1}$; see \cite[Lemma 3]{song2019extension}.}  (c) An explanation from the point of view of circuit theory is given in \Cref{sec:effective resistance between sets}; see Figure \ref{fig:res-sets}.
\end{remark}

As a generalization of \Cref{thm:effective-persistent-preserve}, we establish the following result:
\begin{theorem}\label{thm:preserve of set resistance}
For a graph pair $K\hookrightarrow L$ where $L$ is connected, let $\tilde{K}=\lc V^K,E^{\tilde{K}},w^{\tilde{K}}\rc$ denote the graph such that $\lap_0^{\tilde{K}}=\lap_0^{K,L}$. Then $\mathfrak{R}^{\tilde{K}}_{A,B}=\mathfrak{R}^L_{A,B}$ for any disjoint $A,B\subseteq V^K$.
\end{theorem}

\begin{proof}
By \Cref{thm:effective-persistent-preserve}, $\tilde{K}$ is a connected graph and thus $\mathfrak{R}^{\tilde{K}}_{A,B}$ is well defined. Then, let $V^J\coloneqq A\cup B\subseteq V^K\subseteq V^L$ and let $J$ denote the induced graph in $L$ with vertex set $V^J$. By \Cref{lm:quotient formula}, we have that $\lap_0^{J,\tilde{K}}=\lap_0^{J,L}$. Then, by \Cref{eq:resistance btw sets} we have that $\mathfrak{R}^{\tilde{K}}_{A,B}=\mathfrak{R}^L_{A,B}.$
\end{proof}
%%%%%%%%%%%%%%%%%%%%%%%%%%%%%%%%%%%%%%%%%%%%%%%%%%%%%%%%%%

%%%%%%%%%%%%%%%%%%%%%%%%%%%%%%%%%%%%%

\subsection{Persistent Cheeger inequality for graph pairs $K\hookrightarrow L$}\label{sec:cheeger}
The \emph{Cheeger constant} \cite{chung1996laplacians} $h^K$ of a weighted graph $K=(V^K,E^K,w^K)$ is defined as follows: 
\[h^K\coloneqq \min_{\substack{{\emptyset\neq A\subsetneq V^K}\\{|A|\leq \frac{1}{2}|V^K|}}}\frac{\left\|E^K(A,V^K\backslash A)\right\|_{w^K}}{|A|},\]
where $E^K(A,B)$ denotes the set of all edges $\{v,w\}\in E^K$ such that $v\in A$ and $w\in B$, $|A|$ denotes the cardinality of $A$ and $\left\|E^K(A,B)\right\|_{w^K}\coloneqq\sum_{\{v,w\}\in E^K(A,B)}w^K(\{v,w\})$. 

The Cheeger constant $h^K$ measures the edge expansion \cite{hoory2006expander} of $K$ and it is related to the second smallest eigenvalue $\lambda_{0,2}^K$ of the graph Laplacian $\Delta_0^K$ as follows:
\begin{equation}\label{eq:cheeger inequality}
 \frac{\left(h^K\right)^2}{2\,d_{\mathrm{max}}^K}\leq \lambda_{0,2}^K\leq 2\,h^K,  
\end{equation}
where $d_\mathrm{max}^K\coloneqq\max_{v\in V^K}\deg^K(v)$. \Cref{eq:cheeger inequality} is called the \emph{discrete Cheeger inequality} \cite{chung1996laplacians,gundert2015higher,keller2016general}, which is a discrete analogue to isoperimetric inequalities in Riemannian geometry \cite{buser1982note,cheeger1969lower}.

In this section, we define a \emph{persistent Cheeger constant} for any graph pair $K\hookrightarrow L$ via the effective resistance and establish a corresponding \emph{persistent Cheeger inequality} in analogy to \Cref{eq:cheeger inequality}.

To this end,  for a subset $\emptyset\neq A\subsetneq V^K$, we first observe the following relationship between $\left\|E^K(A,V^K\backslash A)\right\|_{w^K}$ and the effective conductance $\mathfrak{C}^K_{A,V^K\backslash A}$ in a given weighted graph $K$:

\begin{lemma}\label{lm:E_k=R_k}
Given a weighted graph $K$ and any $\emptyset\neq A\subsetneq V^K$, we have that
\[\left\|E^K(A,V^K\backslash A)\right\|_{w^K}=\mathfrak{C}^K_{A,V^K\backslash A}.\]
\end{lemma}

\begin{proof}
By \Cref{rmk:resistance between sets}, 
\[\mathfrak{C}^K_{A,V^K\backslash A}=- \lc\chi_A^{K}\rc^\mathrm{T}\cdot \lap_0^{K}\cdot\chi_{V^K\backslash A}^K=\sum_{v\in A}\sum_{\substack{{w\in V^K\backslash A}\\{\{v,w\}\in S_1^K}}}w^K(\{v,w\})=\left\|E^K(A,V^K\backslash A)\right\|_{w^K}\]
\end{proof}

Hence, the Cheeger constant of a weighted graph $K$ can  be equivalently expressed as 
\begin{equation}\label{eq:new cheeger}
    h^K= \min_{\substack{{\emptyset\neq A\subsetneq V^K}\\{|A|\leq \frac{1}{2}|V^K|}}}\frac{\mathfrak{C}^K_{A,V^K\backslash A}}{|A|}.
\end{equation}
We will use this expression to generalize the Cheeger constant to the case of graph pairs.
In the case of a graph pair $K\hookrightarrow L$, we define a persistent Cheeger constant by replacing the right hand side of \Cref{eq:new cheeger} with the effective conductance between subsets of vertices of $K$ \emph{inside the ambient graph $L$}:

\begin{definition}[Persistent Cheeger constant]
The \emph{persistent Cheeger constant} $h^{K,L}$ for a graph pair $K\hookrightarrow L$ is defined as follows:
\[h^{K,L}\coloneqq\min_{\substack{{\emptyset\neq A\subsetneq V^K}\\{|A|\leq \frac{1}{2}|V^K|}}}\frac{\mathfrak{C}^L_{A,V^K\backslash A}}{|A|}. \]
\end{definition}

It is clear that when $K=L$, $h^{K,L}$ reduces to $h^K$. {The following result indicates the persistent Cheeger constant grows as the ambient graph becomes ``more connected'':}
\begin{proposition}\label{prop:cheeger constant ineq}
Consider three weighted graphs $K\subseteq L_1\subseteq L_2$. Then,
\[h^K\leq h^{K,L_1}\leq h^{K,L_2}.\]
\end{proposition}

See {\Cref{sec:combinatorial upper bd} for comments about using other possible generalizations of the standard Cheeger constant to the case of graph pairs.}

\begin{remark}[Probabilistic interpretation] 
Consider the canonical random walk $\{X_n\}_{n=0}^\infty$ defined on $L$ with $V^L$ being the set of states and the transition probability from $v$ to one of its neighbors $w$ is $\frac{w^L(\{v,w\})}{\deg^L(v)}$. For any $A\subsetneq V^K$, let $B\coloneqq V^K\backslash A$. We establish in \Cref{sec:relation random walk} that $\mathfrak{C}^L_{A,B}$ is proportional to the \textit{escape probability from $A$ to $B$}, i.e., the probability of the walk, starting randomly from a vertex in $A$, reaches $B$ before returning to $A$. In this way, we see that $\mathfrak{C}^L_{A,B}$ measures whether $A$ and $B$ are well-separated in $L$, i.e., the larger $\mathfrak{C}^L_{A,B}$ is, the more connected $A$ and $B$ are. Thus, $h^{K,L}$ measures the capability of $K$ being partitioned into two well-separated parts in $L$.
\end{remark}

% \facundo{Z: state that $h^K\leq h^{K,K}\leq h^{K,L}$.}

Our definition of persistent Cheeger constant is handy to deal with, and we hence arrive at the following \emph{persistent} Cheeger inequality.

\begin{theorem}[Persistent Cheeger inequality]\label{thm:persistent Cheeger inequality}
Let $K\hookrightarrow L$ be a weighted graph pair, then 
\begin{equation}\label{eq:persistent Cheeger ineq}
    \frac{\left(h^{K,L}\right)^2}{2\,d_{\mathrm{max}}^{K,L}}\leq\lambda^{K,L}_{0,2}\leq 2\,h^{K,L},
\end{equation}
where $d_\mathrm{max}^{K,L}\coloneqq\max_{v\in V^K}\deg^L(v)$ and $\lambda_{0,2}^{K,L}$ denotes the second smallest eigenvalue of $\Delta_0^{K,L}$.
\end{theorem}

Note that when $K=L$, \Cref{eq:persistent Cheeger ineq} reduces to \Cref{eq:cheeger inequality}. So our persistent Cheeger inequality is a proper generalization of the standard discrete Cheeger inequality.

\begin{proof}
By \Cref{prop:persistent graph is a graph}, $\lap_0^{K,L}$ is the graph Laplacian $\lap_0^{\tilde{K}}$ of a weighted graph $\tilde{K}=(V^K,E^{\tilde{K}},w^{\tilde{K}})$, so  that $\lambda^{K,L}_{0,2}=\lambda_{0,2}^{\tilde{K}} $. By \Cref{eq:cheeger inequality}, we have that $ \frac{(h^{\tilde{K}})^2}{2d_{\mathrm{max}}^{\tilde{K}}}\leq \lambda^{K,L}_{0,2}\leq 2h^{\tilde{K}}.$
Note that by \Cref{lm:E_k=R_k}
\[h^{\tilde{K}}=\min_{\substack{{\emptyset\neq A\subsetneq V^K}\\{|A|\leq \frac{1}{2}|V^K|}}}\frac{\left\|E^K(A,V^K\backslash A)\right\|_{w^K}}{|A|}=\min_{\substack{{\emptyset\neq A\subsetneq V^K}\\{|A|\leq \frac{1}{2}|V^K|}}}\frac{\mathfrak{C}^{\tilde{K}}_{A,V^K\backslash A}}{|A|},\]
where we have used the fact that $V^K=V^{\tilde{K}}$. By \Cref{thm:preserve of set resistance}, we have that $\mathfrak{R}^{\tilde{K}}_{A,V^K\backslash A}=\mathfrak{R}^L_{A,V^K\backslash A}$ and thus $\mathfrak{C}^{\tilde{K}}_{A,V^K\backslash A}=\mathfrak{C}^L_{A,V^K\backslash A}$. This implies that $h^{\tilde{K}}=h^{K,L}$. 

For any $v\in V^K=V^{\tilde{K}}$, by \Cref{prop:persistent degree} we have that
\[\sum_{w\in V^K}w^{\tilde{K}}(\{v,w\})\leq \sum_{w\in V^L}w^{L}(\{v,w\})\]
and thus $d_{\mathrm{max}}^{\tilde{K}}\leq d_\mathrm{max}^{K,L}$. Therefore,
\[\frac{(h^{K,L})^2}{2d_{\mathrm{max}}^{K,L}}\leq \frac{(h^{K,L})^2}{2d_{\mathrm{max}}^{\tilde{K}}}=\frac{(h^{\tilde{K}})^2}{2d_{\mathrm{max}}^{\tilde{K}}}\leq \lambda^{K,L}_{0,2}\leq 2h^{\tilde{K}}= 2h^{K,L}.\]
\end{proof}

\subsubsection{A combinatorial upper bound for $\lambda_{0,2}^{K,L}$}\label{sec:combinatorial upper bd}
When graphs are unweighted, we provide a combinatorial upper bound for $\lambda_{0,2}^{K,L}$. 

A \emph{path} in a graph $K=(V^K,E^K)$ is a tuple $p=(v_0,\ldots,v_n)$ such that $v_i\in V^K$ for each $i=0,\ldots,n$ and $\{v_i,v_{i+1}\}\in E^K$ for each $i=0,\ldots,n-1$. For two nonempty disjoint subsets $A,B \subseteq V^K$, we denote by $P_K(A,B)$ the set of all paths $p=(v_0,\ldots,v_n)$ in $K$ satisfying: {(i) $v_0\in A,v_n\in B$ and $v_i\notin A\cup B$ for $i=1,\ldots,n-1$; (ii) $\{v_i,v_{i+1}\}\neq \{v_j,v_{j+1}\}$ for $i\neq j$. If $A=\{v\}$ and $B=\{w\}$ are one-point sets, then we also denote $P_K(v,w)\coloneqq P_K(\{v\},\{w\})$.} The following Nash-Williams inequality \cite[Lemma 2.1]{lyons2020induced} permits relating $P_K(A,B)$ with $\mathfrak{R}^K_{A,B}$. 
\begin{lemma}[Nash-Williams inequality]\label{lm:nash williams}
Let $K$ be a weighted graph. Let $A,B$ be nonempty disjoint subsets of $V^K$. A set $\Pi\subseteq E^K$ is called a \emph{cut set} between $A$ and $B$ if for any $v\in A$ and $w\in B$, every path from $v$ to $w$ contains an edge in $\Pi$. Suppose $\Pi_1,\ldots,\Pi_n$ are disjoint cut sets between $A$ and $B$. Then,
\[\mathfrak{R}^K_{A,B}\geq \sum_{k=1}^n\lc\sum_{e\in \Pi_k}w^K(e)\rc^{-1}.\]
\end{lemma}

Now, consider a graph pair $K\hookrightarrow V$. Let $\emptyset\neq A\subseteq V^K$ and let $B\coloneqq V^K\backslash A$. Then, let $p_1,\ldots,p_N$ denote all the paths in $P_L(A,B)$. Choose an arbitrary edge $e_i$ from each path $p_i$. The set $\Pi\coloneqq\{e_i:i=1,\ldots,N\}$ is obviously a cut set between $A$ and $B$. By \Cref{lm:nash williams} we have that $\mathfrak{C}^L_{A,B}\leq \sum_{e\in \Pi}w^L(e)\leq |P_L(A,B)|$. By \Cref{thm:persistent Cheeger inequality} we have the following upper bound for $\lambda_{0,2}^{K,L}$ which arises by minimizing the number of paths in $L$ connecting the two sets in a bipartition of $V^K$: 
\[\frac{1}{2}\lambda_{0,2}^{K,L}\leq \,h^{K,L}\leq \min_{\substack{{\emptyset\neq A\subsetneq V^K}\\{|A|\leq \frac{1}{2}|V^K|}}}\frac{\left|P_L(A,V^K\backslash A)\right|}{|A|}=:{h}^{K,L}_\mathrm{path}.\]

{A priori, it seems plausible that one could have used the right hand side of the above inequality, ${h}_\mathrm{path}^{K,L}$ as the definition of the persistent Cheeger constant. However, as we show in \Cref{sec:alt-pcc}, this quantity does not have a good interplay with the second persistent eigenvalue, i.e., ${h}_\mathrm{path}^{K,L}$ cannot be upper bounded by $\lambda_{0,2}^{K,L}$ in any suitable sense.}

%%%%%%%%%%%%%%%%%%%%%%%%%%%%%%%%%
\section{The persistent Laplacian for simplicial filtrations}\label{sec:filtration}
We now extend the setting of \Cref{sec:pairs} for simplicial pairs to a simplicial filtration. 
%%%%%%%%%%%%%%%%%%%%%%
\subsection{Formulation}
Let $\mathbf{K} = \{K_t\}_{t\in T}$ be a simplicial filtration with an index set $T\subseteq \R$. For each $t\in T$ and $q\in\mathbb{N}$ we let $C_q^t\coloneqq C_q^{K_t}$, $S_q^t\coloneqq S_q^{K_t}$ and $w_q^t\coloneqq w_q^{K_t}$. For $s\leq t\in T$ we let   
\[C_q^{t,s}\coloneqq\left\{c\in C_q^t:\,\partial_q^t(c)\in C_{q-1}^s\right\}\subseteq C_{q}^t.\] 
Let $\partial_q^{t,s}$ be the restriction of $\partial_q^t$ to $C_q^{t,s}$. Then, $\partial_q^{t,s}$ is a map from $C_q^{t,s}$ to $C_{q-1}^s$. Finally, we define the $q$-th persistent Laplacian 
$\Delta_q^{s,t}: C_q^{s}\rightarrow{C_q^s}$
by
\begin{equation}\label{eq:persistent laplacian filtration}
    \Delta_q^{s,t}\coloneqq \underbrace{\partial_{q+1}^{t,s}\circ \left(\partial_{q+1}^{t,s}\right)^\ast}_{\Delta_{q,\mathrm{up}}^{s,t}}+\underbrace{\left(\partial_q^s\right)^\ast\circ\partial_q^s}_{\Delta_{q,\mathrm{down}}^s},
\end{equation}
where we view $C_q^t$ for each $t\in T$ as a Hilbert space with the inner product $\langle\cdot,\cdot\rangle_{w_q^t}$ and $A^\ast$ means the adjoint of an operator $A$ under these inner products. 
We also let $\Delta_q^t$ denote the $q$-th Laplacian of $K_t$ for $t\in T$. Note that $\Delta_q^{t,t}=\Delta_q^t$ (cf. \Cref{ex:trivial cases}).

\subsection{An algorithm for $\lap_q^{s,t}$} 

\newcommand{\myGamma}  {{\Gamma}}
\newcommand{\newD} {\lap_{q,\mathrm{up}}}

Consider the simplicial filtration $K_1 \hookrightarrow K_2 \hookrightarrow \cdots \hookrightarrow K_m$ where each $K_{t+1}$ contains exactly one more simplex than $K_t$ for $t=1,\ldots,m-1$. In this section, we show that, for a fixed index $t\in[m]$, we can compute the matrix representation $\lap_q^{s,t}$ of the persistent Laplacian $\Delta_q^{s,t}$, for all $1\leq s\leq t$, in time $O\lc t \lc n_q^t\rc^2+n_{q+1}^t\rc$, where $n_q^t\coloneqq n_q^{K_t}$ is the number of $q$-simplices in $K_t$. Note that this is more efficient than applying the Schur complement formula for $\lap_q^{s,t}$ (\Cref{eq:schur complement up laplacian}) $t$ times, which will lead to $O\left(t\lc n_q^t\rc^3+t\,n_{q+1}^t\right)$ total time. This result is again achieved via the relation between persistent Laplacian with Schur complement (cf. \Cref{thm:persis-Laplacian-schur-formula}).  

Recall from \Cref{eq:persistent laplacian filtration} that for any $1\le s \le t$, 
$\lap_q^{s,t} = \lap_{q, \mathrm{up}}^{s,t} + \lap_{q,\mathrm{down}}^s$. 
Since $\lap_{q,\mathrm{down}}^s$ can be constructed in time $O\lc\lc n_q^s\rc^2\rc = O\lc \lc n_q^t\rc^2\rc$ (cf. \Cref{sec:computation of up and down}), the set of $\lap_{q,\mathrm{down}}^s$ for all $1\le s \le t$ can be computed in $O\lc t \lc n_q^t\rc^2\rc$ time.

For simplicity, we assume that $S_q^s = \{\sigma_1, \ldots, \sigma_s\}$ for each $s=1,\ldots,t$, that is, $K_{s+1}$ contains exactly one more $q$-simplex than $K_s$ for $s=1,\ldots,t-1$. It then follows that $\newD^{s,t} = \newD^{t} / \newD^{t}(I_s^t, I_s^t)$, where $I_s^t$ is the index set $I_s^t= \{s+1,s+2,\ldots,t\}$. By \Cref{rmk:w=1 weighted laplacian} and \Cref{lm:psd-proper}, $\newD^{t}(I_s^t, I_s^t)$ is proper in $\newD^{t}$ for each $s=1,\ldots,t-1$. Therefore, following the Quotient Formula (\Cref{lm:quotient formula}), to compute $\newD^{s,t}$, one can perform an iterative reduction from $\newD^{t}$ to $\newD^{t-1, t}, \ldots, \newD^{s+1, t}$, and down to $\newD^{s,t}$. 
More precisely, for any $\ell \le t,$
\begin{align}\label{eqn:iterKron}
    \newD^{\ell-1,t}(i,j) &=\begin{cases}\newD^{\ell, t}(i,j) -  \frac{\newD^{\ell,t}(i,\ell) \newD^{\ell,t}(j, \ell)}{\newD^{\ell,t}(\ell,\ell)},& \text{if }\newD^{\ell,t}(\ell,\ell)\neq 0\\
    \newD^{\ell,t}(i,j),& \text{if }\newD^{\ell,t}(\ell,\ell)= 0\end{cases}  ~~~\text{for any}~i,j \in [\ell-1]. 
\end{align}
\Cref{eqn:iterKron} reduces to the celebrated Kron reduction formula (see Equation (16) of \cite{dorfler2012kron}) when $K_t$ is a connected graph, $q=0$ and $w_0^t\equiv 1$. In other words, $\newD^{\ell-1,t}$ can be computed from $\newD^{\ell,t}$ in time linear to the size of the matrix, which is bounded by $O\lc \lc n_q^t\rc^2\rc$. Note that from \Cref{sec:computation of up and down} we know computing $\newD^t$ takes time $O\lc n_{q+1}^t\rc$. It then follows that using \Cref{eqn:iterKron}, we can compute $\newD^{s,t}$, for all $1\leq s\leq t$ iteratively in $O\lc t \lc n_q^t\rc^2+n_{q+1}^t\rc$ total time. We summarize our discussion into the following theorem.

\begin{theorem}
Let $K_1 \hookrightarrow \cdots \hookrightarrow K_m$ be a simplicial filtration where each $K_{t+1}$ contains exactly one more simplex than $K_t$ for all $t\in[m-1]$. For any fixed $t\in[m]$, we can compute the whole set $\left\{\lap_q^{s,t}\right\}_{s=1}^t$ of persistent Laplacians in $O\lc t \lc n_q^t\rc^2+n_{q+1}^t\rc$ time. This also implies that we can compute all $\lap_q^{i, j}$, for any $1\leq i\le j \leq m$, in $O\lc m^2 \lc n_q^m\rc^2+m\,n_{q+1}^m\rc$ total time.
\end{theorem}

\subsection{Monotonicity and stability of persistent eigenvalues}
Recall from \Cref{sec:basic properties of persistent Laplacian} that for a simplicial pair $K\hookrightarrow L$, $\lambda_{q,k}^{K,L}$ denotes the $k$-th smallest eigenvalue of $\Delta_q^{K,L}$. Now, given a simplicial filtration $\mathbf{K} = \{K_t\}_{t\in T}$, we define its \emph{$k$-th persistent eigenvalue} $\lambda_{q,k}^{s,t}(\mathbf{K})$ for each $s\leq t\in T$ by $\lambda_{q,k}^{s,t}(\mathbf{K})\coloneqq\lambda_{q,k}^{K_s,K_t}$. We define the \emph{$k$-th up-persistent eigenvalue} $\lambda_{q,\mathrm{up},k}^{s,t}(\mathbf{K})$ for each $s\leq t\in T$ to be the $k$-th smallest eigenvalue of $\Delta_{q,\mathrm{up}}^{s,t}$. Whenever the underlying filtration $\mathbf{K}$ is clear from the context, we let $\lambda_{q,k}^{s,t}\coloneqq\lambda_{q,k}^{s,t}(\mathbf{K})$ and $\lambda_{q,\mathrm{up},k}^{s,t}\coloneqq\lambda_{q,\mathrm{up},k}^{s,t}(\mathbf{K})$.

In \cite{wang2020persistent} the authors suggest that invariants similar to persistent eigenvalues could be useful for shape classification applications. With that in mind,  we now explore both their monotonicity and stability properties, concluding with \cref{thm:stab-evals}.

\begin{theorem}[Monotonicity of up persistent Laplacian eigenvalues]\label{thm:monotonicity up eigen}
Let $\mathbf{K} = \{K_t\}_{t\in T}$ be a simplicial filtration and let $q\in\mathbb{N}$. Then, for any $t_1\leq t_2\leq t_3\in T$, we have for each $k=1,\ldots,n_q^{t_1}$ that $\lambda_{q,\mathrm{up},k}^{t_1,t_2}\leq \lambda_{q,\mathrm{up},k}^{t_1,t_3}$ and $\lambda_{q,\mathrm{up},k}^{t_2,t_3}\leq \lambda_{q,\mathrm{up},k}^{t_1,t_3}$.
\end{theorem}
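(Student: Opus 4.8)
\textbf{Proof plan for Theorem~\ref{thm:monotonicity up eigen}.} The plan is to reduce the monotonicity statement to the eigenvalue interlacing property of Schur complements, namely \Cref{lm:eigen-interlacing}, applied to the up-Laplacian of the largest complex $K_{t_3}$. For the first inequality $\lambda_{q,\mathrm{up},k}^{t_1,t_2}\leq \lambda_{q,\mathrm{up},k}^{t_1,t_3}$, fix an ordering on $\bar S_q^{t_3}$ compatible with the inclusions $K_{t_1}\hookrightarrow K_{t_2}\hookrightarrow K_{t_3}$, so that the first $n_q^{t_1}$ indices correspond to $\bar S_q^{t_1}$ and the first $n_q^{t_2}$ to $\bar S_q^{t_2}$. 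Writing $I_1\coloneqq [n_q^{t_3}]\backslash[n_q^{t_1}]$ and $I_2\coloneqq [n_q^{t_3}]\backslash[n_q^{t_2}]$, \Cref{thm:persis-Laplacian-schur-formula} gives $\lap_{q,\mathrm{up}}^{t_1,t_3}=\lap_{q,\mathrm{up}}^{t_3}/\lap_{q,\mathrm{up}}^{t_3}(I_1,I_1)$ and $\lap_{q,\mathrm{up}}^{t_2,t_3}=\lap_{q,\mathrm{up}}^{t_3}/\lap_{q,\mathrm{up}}^{t_3}(I_2,I_2)$ (and in the degenerate cases where $n_q^{t_i}=n_q^{t_3}$ one uses \Cref{ex:trivial cases} directly and the claim is trivial).

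Next I would invoke the quotient formula. Since $\lap_{q,\mathrm{up}}^{t_3}$ is positive semi-definite, every principal submatrix is proper in it (\Cref{ex:psd-proper}), and similarly $\lap_{q,\mathrm{up}}^{t_3}(I_1,I_1)$ is a principal submatrix of $\lap_{q,\mathrm{up}}^{t_3}(I_2,I_2)$ after reordering, hence proper in it. By \Cref{lm:quotient formula}, $\lap_{q,\mathrm{up}}^{t_1,t_3}=\lap_{q,\mathrm{up}}^{t_3}/\lap_{q,\mathrm{up}}^{t_3}(I_2,I_2)\big/\big(\lap_{q,\mathrm{up}}^{t_3}(I_2,I_2)/\lap_{q,\mathrm{up}}^{t_3}(I_1,I_1)\big)$, i.e., $\lap_{q,\mathrm{up}}^{t_1,t_3}$ is itself the Schur complement of a proper principal submatrix inside $\lap_{q,\mathrm{up}}^{t_2,t_3}$. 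Since $\lap_{q,\mathrm{up}}^{t_2,t_3}$ is positive semi-definite, \Cref{lm:eigen-interlacing} applies (with $M=\lap_{q,\mathrm{up}}^{t_2,t_3}$ and the deleted block being that proper submatrix), and the leftmost inequality there yields $\lambda_k\big(\lap_{q,\mathrm{up}}^{t_2,t_3}\big)\leq \lambda_k\big(\lap_{q,\mathrm{up}}^{t_1,t_3}\big)$ for all $k$ up to the size of $\lap_{q,\mathrm{up}}^{t_1,t_3}$, which is exactly $\lambda_{q,\mathrm{up},k}^{t_2,t_3}\leq \lambda_{q,\mathrm{up},k}^{t_1,t_3}$. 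This is the second asserted inequality.

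For the first inequality $\lambda_{q,\mathrm{up},k}^{t_1,t_2}\leq \lambda_{q,\mathrm{up},k}^{t_1,t_3}$, I would run the analogous argument one level down: applying \Cref{thm:persis-Laplacian-schur-formula} to the pair $K_{t_1}\hookrightarrow K_{t_2}$ shows $\lap_{q,\mathrm{up}}^{t_1,t_2}=\lap_{q,\mathrm{up}}^{t_2}/\lap_{q,\mathrm{up}}^{t_2}(I',I')$ with $I'=[n_q^{t_2}]\backslash[n_q^{t_1}]$, while the previous paragraph already identifies $\lap_{q,\mathrm{up}}^{t_2,t_3}$ as a Schur complement of $\lap_{q,\mathrm{up}}^{t_3}$ and $\lap_{q,\mathrm{up}}^{t_1,t_3}$ as a Schur complement of $\lap_{q,\mathrm{up}}^{t_3}$ by the larger index set; another application of the quotient formula shows $\lap_{q,\mathrm{up}}^{t_1,t_3}=\lap_{q,\mathrm{up}}^{t_1,t_2}\big/\big(\text{proper submatrix}\big)$ by exhibiting $\lap_{q,\mathrm{up}}^{t_2}(I',I')$-type blocks as the relevant sub-blocks, so again \Cref{lm:eigen-interlacing} gives the bound. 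Alternatively, and perhaps more cleanly, one observes directly that both $\lap_{q,\mathrm{up}}^{t_1,t_2}$ and $\lap_{q,\mathrm{up}}^{t_1,t_3}$ are operators on the same space $C_q^{t_1}$, and that $\lap_{q,\mathrm{up}}^{t_1,t_3}=\lap_{q,\mathrm{up}}^{t_1,t_2}/\big(\lap_{q,\mathrm{up}}^{t_1,t_3}\text{'s bottom block}\big)$ is false in general — the two live on the same index set — so one really must pass through $\lap_{q,\mathrm{up}}^{t_3}$ and use transitivity of Schur complements via the quotient formula.

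\textbf{Main obstacle.} The delicate part is the careful bookkeeping of index sets and repeated application of the quotient formula (\Cref{lm:quotient formula}) to guarantee that at each stage the smaller principal submatrix is genuinely \emph{proper} in the larger one, so that \Cref{lm:eigen-interlacing} is legitimately applicable; the positive semi-definiteness of all the up-Laplacians involved (themselves Schur complements of a PSD matrix, hence PSD by \Cref{lm:schur-complement-rank} type reasoning, or directly since they are of the form $\partial\partial^\ast$) is what makes the properness automatic via \Cref{ex:psd-proper}, and keeping track of which complex plays the role of the ambient matrix in each interlacing step is where care is needed. The degenerate cases $n_q^{t_i}=n_q^{t_j}$ (where the Schur complement formula \Cref{eq:schur complement up laplacian} is vacuous) must be handled separately using \Cref{ex:trivial cases}, but there the relevant up-Laplacians coincide or one reduces to a smaller instance, so the inequalities hold trivially or by induction on the number of distinct values among $n_q^{t_1},n_q^{t_2},n_q^{t_3}$.
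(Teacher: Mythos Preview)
Your argument for the second inequality $\lambda_{q,\mathrm{up},k}^{t_2,t_3}\leq \lambda_{q,\mathrm{up},k}^{t_1,t_3}$ is correct and is exactly what the paper does: express both matrices as Schur complements of $\lap_{q,\mathrm{up}}^{t_3}$, use the quotient formula to realize $\lap_{q,\mathrm{up}}^{t_1,t_3}$ as a Schur complement inside $\lap_{q,\mathrm{up}}^{t_2,t_3}$, and finish with interlacing.

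There is, however, a genuine gap in your handling of the first inequality $\lambda_{q,\mathrm{up},k}^{t_1,t_2}\leq \lambda_{q,\mathrm{up},k}^{t_1,t_3}$. You correctly observe that both matrices have size $n_q^{t_1}\times n_q^{t_1}$, so neither can be a nontrivial Schur complement of the other. Your proposed fix, to ``pass through $\lap_{q,\mathrm{up}}^{t_3}$ and use transitivity of Schur complements,'' does not work: $\lap_{q,\mathrm{up}}^{t_1,t_2}$ is a Schur complement of $\lap_{q,\mathrm{up}}^{t_2}$, and $\lap_{q,\mathrm{up}}^{t_2}$ bears no Schur-complement relation whatsoever to $\lap_{q,\mathrm{up}}^{t_3}$. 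The matrix $\lap_{q,\mathrm{up}}^{t_2}=B_{q+1}^{t_2}\lc B_{q+1}^{t_2}\rc^\T$ is built from the boundary operator on the $(q{+}1)$-simplices of $K_{t_2}$, whereas $\lap_{q,\mathrm{up}}^{t_3}$ uses those of $K_{t_3}$; the former is neither a principal submatrix of the latter nor a Schur complement of it. So the quotient formula has nothing to bite on, and the interlacing lemma cannot be invoked along this route.

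The paper instead proves the first inequality by a direct operator argument. Since $C_{q+1}^{t_2,t_1}\subseteq C_{q+1}^{t_3,t_1}$, one takes the orthogonal decomposition $C_{q+1}^{t_3,t_1}=C_{q+1}^{t_2,t_1}\oplus\lc C_{q+1}^{t_2,t_1}\rc^\perp$ and correspondingly splits $\partial_{q+1}^{t_3,t_1}=\partial_{q+1}^{t_2,t_1}\oplus\partial^\perp$, which yields
\[
\Delta_{q,\mathrm{up}}^{t_1,t_3}=\Delta_{q,\mathrm{up}}^{t_1,t_2}+\partial^\perp\lc\partial^\perp\rc^*.
\]
Thus $\Delta_{q,\mathrm{up}}^{t_1,t_3}-\Delta_{q,\mathrm{up}}^{t_1,t_2}$ is positive semi-definite, and the eigenvalue inequality follows from the min-max characterization. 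This is the missing idea: for this direction the two up-Laplacians live on the same space and differ by a PSD correction, which is a statement about the domain $C_{q+1}^{\cdot,t_1}$ growing, not about Schur complements on $C_q$.
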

The proof exploits the connection of the up-Laplacian with Schur complements (\Cref{thm:persis-Laplacian-schur-formula}).

\begin{proof}
By the min-max theorem (see for example \cite[Theorem 2.1]{horak2013spectra}), we have for any $s\leq t\in T$ and for each $k=1,\ldots,n_q^{t_1}$ that
\[\lambda_{q,\mathrm{up},k}^{s,t}=\min_{V_k\subseteq C_q^{s}}\max_{g\in V_k}\frac{\left\langle\Delta_{q,\mathrm{up}}^{s,t}g,g\right\rangle_{w_q^{s}}}{\langle g,g\rangle_{w_q^{s}}},\]
where the minimum is taken over all $k$-dim subspaces $V_k$ of $C_q^{s}$. Then, in order to prove that $\lambda_{q,\mathrm{up},k}^{t_1,t_2}\leq \lambda_{q,\mathrm{up},k}^{t_1,t_3}$, we only need to verify that $\langle\Delta_{q,\mathrm{up}}^{t_1,t_2}g,g\rangle_{w_q^{t_1}}\leq\langle\Delta_{q,\mathrm{up}}^{t_1,t_3}g,g\rangle_{w_q^{t_1}}$ for any $k$-dim subspace $V_k\subseteq C_q^{t_1}$ and any $g\in V_k$.

Now, since $C_{q+1}^{t_2,t_1}\subseteq C_{q+1}^{t_3,t_1}$, we consider an orthogonal decomposition $C_{q+1}^{t_3,t_1}=C_{q+1}^{t_2,t_1}\bigoplus \left(C_{q+1}^{t_2,t_1}\right)^\perp$. Then, we have the decomposition $\partial_{q+1}^{t_3,t_1}=\partial_{q+1}^{t_2,t_1}\oplus \partial^\perp$, where $\partial^\perp$ maps $\left(C_{q+1}^{t_2,t_1}\right)^\perp$ into $C_q^{t_1}$.
Therefore, we have that
\begin{align*}
    \Delta_{q,\mathrm{up}}^{t_1,t_3}={\partial_{q+1}^{t_3,t_1} \left(\partial_{q+1}^{t_3,t_1}\right)^*}={\partial_{q+1}^{t_2,t_1} \left(\partial_{q+1}^{t_2,t_1}\right)^*}+\partial^\perp \left(\partial^\perp\right)^*=\Delta_{q,\mathrm{up}}^{t_1,t_2}+\partial^\perp \left(\partial^\perp\right)^*.
\end{align*}
This implies the following and thus $\lambda_{q,\mathrm{up},k}^{t_1,t_2}\leq \lambda_{q,\mathrm{up},k}^{t_1,t_3}$:
\begin{align*}
    \left\langle\Delta_{q,\mathrm{up}}^{t_1,t_3}g,g\right\rangle_{w_q^{t_1}}&=\left\langle\Delta_{q,\mathrm{up}}^{t_1,t_2}g,g\right\rangle_{w_q^{t_1}}+\left\langle \partial^\perp \left(\partial^\perp\right)^*g,g\right\rangle_{w_q^{t_1}}\\
    &=\left\langle\Delta_{q,\mathrm{up}}^{t_1,t_2}g,g\right\rangle_{w_q^{t_1}}+\left\langle  \left(\partial^\perp\right)^*g, \left(\partial^\perp\right)^*g\right\rangle_{w_q^{t_1}}\geq \left\langle\Delta_{q,\mathrm{up}}^{t_1,t_2}g,g\right\rangle_{w_q^{t_1}}.
\end{align*}

As for $\lambda_{q,\mathrm{up},k}^{t_2,t_3}\leq \lambda_{q,\mathrm{up},k}^{t_1,t_3}$, we will apply \Cref{thm:persis-Laplacian-schur-formula}. For notational simplicity, we let $I_s^t\coloneqq [n_q^t]\backslash [n_q^s]$. Since the matrix $\lap_{q,\mathrm{up}}^{t_3}$ is positive semi-definite, both $\lap_{q,\mathrm{up}}^{t_3}\lc I_{t_2}^{t_3}, I_{t_2}^{t_3}\rc$ and $\lap_{q,\mathrm{up}}^{t_3}\lc I_{t_1}^{t_3}, I_{t_1}^{t_3}\rc$ are proper in $\lap_{q,\mathrm{up}}^{t_3}$ (cf. \Cref{lm:psd-proper}). Moreover, $\lap_{q,\mathrm{up}}^{t_3}\lc I_{t_2}^{t_3}, I_{t_2}^{t_3}\rc$ is proper in $\lap_{q,\mathrm{up}}^{t_3}\lc I_{t_1}^{t_3}, I_{t_1}^{t_3}\rc$. Then, by \Cref{lm:quotient formula}, $\lap_{q,\mathrm{up}}^{t_3}/\lap_{q,\mathrm{up}}^{t_3}\lc I_{t_1}^{t_3}, I_{t_1}^{t_3}\rc$ is the Schur complement of some proper principal submatrix in $\lap_{q,\mathrm{up}}^{t_3}/\lap_{q,\mathrm{up}}^{t_3}\lc I_{t_2}^{t_3}, I_{t_2}^{t_3}\rc$. By \Cref{lm:psd-proper} and \Cref{lm:eigen-interlacing}, 
\[\lambda_k\lc \lap_{q,\mathrm{up}}^{t_3}/\lap_{q,\mathrm{up}}^{t_3}\lc I_{t_2}^{t_3}, I_{t_2}^{t_3}\rc\rc\leq \lambda_k\lc \lap_{q,\mathrm{up}}^{t_3}/\lap_{q,\mathrm{up}}^{t_3}\lc I_{t_1}^{t_3}, I_{t_1}^{t_3}\rc\rc,\, k = 1,\ldots,n_q^s.\]
Then, by \Cref{thm:persis-Laplacian-schur-formula}, we have that $\lambda_{{q,\mathrm{up}},k}^{t_2,t_3}\leq \lambda_{{q,\mathrm{up}},k}^{t_1,t_3}$ for all $k=1,\ldots,n_q^{t_1}.$
\end{proof}

Note that when $q=0$, $\Delta_0^{s,t}=\Delta_{0,\mathrm{up}}^{s,t}$ for $s\leq t$. Then, we have the following corollary. 

\begin{corollary}
Let $\mathbf{K} = \{K_t\}_{t\in T}$ be a simplicial filtration. Then for any $t_1\leq t_2\leq t_3\in T$, we have for each $k=1,\ldots,n_0^{t_1}$ that $\lambda_{0,k}^{t_1,t_2}\leq \lambda_{0,k}^{t_1,t_3}$ and $\lambda_{0,k}^{t_2,t_3}\leq \lambda_{0,k}^{t_1,t_3}$.
\end{corollary}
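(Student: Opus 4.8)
The plan is to reduce this corollary directly to \Cref{thm:monotonicity up eigen}. First I would invoke the convention $\partial_0^K \coloneqq 0$ (recorded in \Cref{sec:basics} and again in \Cref{eq:persistent laplacian filtration}), which forces $\Delta_{0,\mathrm{down}}^s = \left(\partial_0^s\right)^\ast\circ\partial_0^s = 0$ for every $s\in T$. Consequently, as already noted in the remark immediately preceding the corollary, $\Delta_0^{s,t} = \Delta_{0,\mathrm{up}}^{s,t}$ as operators on $C_0^s$ for all $s\leq t\in T$.

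From this identification of operators it follows that their spectra coincide: $\lambda_{0,k}^{s,t} = \lambda_{0,\mathrm{up},k}^{s,t}$ for every $s\leq t\in T$ and every $k=1,\ldots,n_0^s$. Here one should simply note that both sides are, by definition, the $k$-th smallest eigenvalue (with multiplicity) of the \emph{same} self-adjoint positive semi-definite operator on $C_0^s$, so the equality is purely a matter of unwinding the definitions of $\lambda_{0,k}^{s,t}(\mathbf{K})$ and $\lambda_{0,\mathrm{up},k}^{s,t}(\mathbf{K})$ given at the start of \Cref{sec:filtration}.

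With this in hand, I would apply \Cref{thm:monotonicity up eigen} with $q=0$: for any $t_1\leq t_2\leq t_3\in T$ and each $k=1,\ldots,n_0^{t_1}$ it gives $\lambda_{0,\mathrm{up},k}^{t_1,t_2}\leq \lambda_{0,\mathrm{up},k}^{t_1,t_3}$ and $\lambda_{0,\mathrm{up},k}^{t_2,t_3}\leq \lambda_{0,\mathrm{up},k}^{t_1,t_3}$. Substituting the equalities $\lambda_{0,k}^{\cdot,\cdot} = \lambda_{0,\mathrm{up},k}^{\cdot,\cdot}$ from the previous step yields exactly $\lambda_{0,k}^{t_1,t_2}\leq \lambda_{0,k}^{t_1,t_3}$ and $\lambda_{0,k}^{t_2,t_3}\leq \lambda_{0,k}^{t_1,t_3}$, which is the claim. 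There is essentially no obstacle here: the corollary is a direct specialization, and the only point requiring care is the bookkeeping that the index range $k=1,\ldots,n_0^{t_1}$ and the role of the domain $C_0^{t_1}$ are inherited correctly from \Cref{thm:monotonicity up eigen} (in particular that in the second inequality the relevant domain is $C_0^{t_2}$, but since $n_0$ is monotone the stated range $k \le n_0^{t_1}$ is within bounds).
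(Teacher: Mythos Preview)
Your proposal is correct and is exactly the approach the paper takes: the paper notes that when $q=0$ one has $\Delta_0^{s,t}=\Delta_{0,\mathrm{up}}^{s,t}$ and then states the corollary as an immediate consequence of \Cref{thm:monotonicity up eigen}, with no additional argument. Your careful remark about the index range $k\le n_0^{t_1}$ being within bounds for the second inequality is a nice bit of bookkeeping that the paper leaves implicit.
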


A simple adaptation of the proof of the formula $\lambda_{q,\mathrm{up},k}^{t_1,t_2}\leq \lambda_{q,\mathrm{up},k}^{t_1,t_3}$ will give rise to the following monotonicity result for eigenvalues of persistent Laplacians.

\begin{corollary}
Let $\mathbf{K} = \{K_t\}_{t\in T}$ be a simplicial filtration. Given $q\in\mathbb{N}$, then for any $t_1\leq t_2\leq t_3\in T$, we have for each $k=1,\ldots,n_q^{t_1}$ that $\lambda_{q,k}^{t_1,t_2}\leq \lambda_{q,k}^{t_1,t_3}$.
\end{corollary}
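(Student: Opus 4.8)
The plan is to adapt the proof of the inequality $\lambda_{q,\mathrm{up},k}^{t_1,t_2}\leq\lambda_{q,\mathrm{up},k}^{t_1,t_3}$ from \Cref{thm:monotonicity up eigen}, the key observation being that in the decomposition $\Delta_q^{s,t}=\Delta_{q,\mathrm{up}}^{s,t}+\Delta_{q,\mathrm{down}}^s$ the \emph{down} part $\Delta_{q,\mathrm{down}}^s=(\partial_q^s)^\ast\circ\partial_q^s$ depends only on the left index $s$. Fixing $s=t_1$, this down part is therefore identical for the pairs $K_{t_1}\hookrightarrow K_{t_2}$ and $K_{t_1}\hookrightarrow K_{t_3}$, so the whole comparison reduces to controlling the up part, which was already handled in \Cref{thm:monotonicity up eigen}.

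First I would record the inclusion $C_{q+1}^{t_2,t_1}\subseteq C_{q+1}^{t_3,t_1}$: if $c\in C_{q+1}^{t_2,t_1}$ then $c\in C_{q+1}^{t_2}\subseteq C_{q+1}^{t_3}$ and $\partial_{q+1}^{t_3}(c)=\partial_{q+1}^{t_2}(c)\in C_q^{t_1}$, so $c\in C_{q+1}^{t_3,t_1}$, and moreover the restriction of $\partial_{q+1}^{t_3,t_1}$ to $C_{q+1}^{t_2,t_1}$ coincides with $\partial_{q+1}^{t_2,t_1}$. Then, exactly as in \Cref{thm:monotonicity up eigen}, take the orthogonal decomposition $C_{q+1}^{t_3,t_1}=C_{q+1}^{t_2,t_1}\oplus\big(C_{q+1}^{t_2,t_1}\big)^{\perp}$ inside $C_{q+1}^{t_3,t_1}$, write $\partial_{q+1}^{t_3,t_1}=\partial_{q+1}^{t_2,t_1}\oplus\partial^{\perp}$ with $\partial^{\perp}\colon\big(C_{q+1}^{t_2,t_1}\big)^{\perp}\to C_q^{t_1}$, and conclude that $\Delta_{q,\mathrm{up}}^{t_1,t_3}=\Delta_{q,\mathrm{up}}^{t_1,t_2}+\partial^{\perp}(\partial^{\perp})^\ast$. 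Adding the common operator $\Delta_{q,\mathrm{down}}^{t_1}$ to both sides gives
\[
\Delta_q^{t_1,t_3}=\Delta_q^{t_1,t_2}+\partial^{\perp}(\partial^{\perp})^\ast .
\]

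Finally I would invoke the Courant--Fischer min-max characterization of eigenvalues, again as in \Cref{thm:monotonicity up eigen}: since $\partial^{\perp}(\partial^{\perp})^\ast$ is positive semi-definite, for every $g\in C_q^{t_1}$ we have
\[
\langle\Delta_q^{t_1,t_3}g,g\rangle_{C_q^{t_1}}=\langle\Delta_q^{t_1,t_2}g,g\rangle_{C_q^{t_1}}+\big\|(\partial^{\perp})^\ast g\big\|^2\ \geq\ \langle\Delta_q^{t_1,t_2}g,g\rangle_{C_q^{t_1}},
\]
so the Rayleigh quotient of $\Delta_q^{t_1,t_3}$ dominates that of $\Delta_q^{t_1,t_2}$ pointwise on $C_q^{t_1}$, whence $\lambda_{q,k}^{t_1,t_2}\leq\lambda_{q,k}^{t_1,t_3}$ for all $k=1,\dots,n_q^{t_1}$. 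I do not anticipate any genuine obstacle: the only points needing (minor) care are that the down Laplacian is literally unchanged when the left index is held fixed, and that $\partial_{q+1}^{t_3,t_1}$ restricts to $\partial_{q+1}^{t_2,t_1}$ on $C_{q+1}^{t_2,t_1}$ — both immediate from the definitions. It is worth noting that, unlike the up-Laplacian case, there is no companion inequality comparing the pairs $(t_1,t_3)$ and $(t_2,t_3)$ here, because changing the left index alters the down part and the Schur-complement description of \Cref{thm:persis-Laplacian-schur-formula} controls only the up-part; this is why the corollary is stated solely for the fixed-left-endpoint comparison $t_1\leq t_2\leq t_3$.
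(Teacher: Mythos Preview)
Your proposal is correct and follows exactly the adaptation the paper has in mind: the paper merely states that ``a simple adaptation of the proof of the formula $\lambda_{q,\mathrm{up},k}^{t_1,t_2}\leq \lambda_{q,\mathrm{up},k}^{t_1,t_3}$'' yields the corollary, and your argument---keeping the common down part $\Delta_{q,\mathrm{down}}^{t_1}$ fixed and reusing the orthogonal decomposition $\Delta_{q,\mathrm{up}}^{t_1,t_3}=\Delta_{q,\mathrm{up}}^{t_1,t_2}+\partial^{\perp}(\partial^{\perp})^\ast$ together with Courant--Fischer---is precisely that adaptation. Your closing remark explaining why no companion inequality for the pair $(t_2,t_3)$ versus $(t_1,t_3)$ is claimed is also on point.
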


\paragraph*{Stability of up-persistent eigenvalues with respect to the interleaving distance} 

\begin{lemma}\label{lm:four point ineq}
Let $K_{t_1}\hookrightarrow K_{t_2}\hookrightarrow K_{t_3}\hookrightarrow K_{t_4}$ be a simplicial filtration over an index set $\{t_1\leq t_2\leq t_3\leq t_4\}$ with
four points. Then, for any $k=1,\ldots, n_q^{t_1}$, we have $\lambda_{{q,\mathrm{up}},k}^{t_1,t_4}\geq \lambda_{{q,\mathrm{up}},k}^{t_2,t_3}$.
\end{lemma}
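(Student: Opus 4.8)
The plan is to prove the two monotonicity facts already established (namely $\lambda_{q,\mathrm{up},k}^{t_2,t_3}\leq \lambda_{q,\mathrm{up},k}^{t_2,t_4}$ and $\lambda_{q,\mathrm{up},k}^{t_1,t_4}\geq \lambda_{q,\mathrm{up},k}^{t_2,t_4}$) and then chain them. In more detail, apply \Cref{thm:monotonicity up eigen} to the sub-filtration $K_{t_2}\hookrightarrow K_{t_3}\hookrightarrow K_{t_4}$: the first inequality of that theorem (with the triple $t_2\leq t_3\leq t_4$) gives $\lambda_{q,\mathrm{up},k}^{t_2,t_3}\leq \lambda_{q,\mathrm{up},k}^{t_2,t_4}$ for all $k=1,\ldots,n_q^{t_2}$. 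Next apply \Cref{thm:monotonicity up eigen} to the triple $t_1\leq t_2\leq t_4$: the second inequality there (the one proved via the quotient formula and eigenvalue interlacing, \Cref{lm:quotient formula} and \Cref{lm:eigen-interlacing}) gives $\lambda_{q,\mathrm{up},k}^{t_2,t_4}\leq \lambda_{q,\mathrm{up},k}^{t_1,t_4}$ for all $k=1,\ldots,n_q^{t_1}$.

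Composing these, for each $k=1,\ldots,n_q^{t_1}$ we get
\[
\lambda_{q,\mathrm{up},k}^{t_2,t_3}\;\leq\;\lambda_{q,\mathrm{up},k}^{t_2,t_4}\;\leq\;\lambda_{q,\mathrm{up},k}^{t_1,t_4},
\]
which is exactly the claimed inequality $\lambda_{q,\mathrm{up},k}^{t_1,t_4}\geq \lambda_{q,\mathrm{up},k}^{t_2,t_3}$. The only bookkeeping point is the index range: the first chained inequality a priori holds for $k\le n_q^{t_2}$ and the second for $k\le n_q^{t_1}$; since $K_{t_1}\hookrightarrow K_{t_2}$ gives $n_q^{t_1}\le n_q^{t_2}$, restricting to $k\le n_q^{t_1}$ is consistent and the composition is valid on that range.

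I do not expect a genuine obstacle here — the lemma is essentially a formal consequence of \Cref{thm:monotonicity up eigen}. The one place that requires a little care (and is arguably the "main" step) is making sure the second step's hypothesis is met: \Cref{thm:monotonicity up eigen} already carried out the argument that $\lap_{q,\mathrm{up}}^{t_4}\lc I_{t_2}^{t_4},I_{t_2}^{t_4}\rc$ is proper in $\lap_{q,\mathrm{up}}^{t_4}\lc I_{t_1}^{t_4},I_{t_1}^{t_4}\rc$ and invoked \Cref{lm:quotient formula} plus \Cref{lm:eigen-interlacing} with $t_3$ replaced by $t_4$, so nothing new needs to be verified; one simply cites the theorem with the appropriate relabeling of indices.
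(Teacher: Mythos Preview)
Your proof is correct and follows essentially the same approach as the paper: both apply \Cref{thm:monotonicity up eigen} twice and chain the resulting inequalities. The only cosmetic difference is the intermediate pair chosen---you pass through $\lambda_{q,\mathrm{up},k}^{t_2,t_4}$, whereas the paper passes through $\lambda_{q,\mathrm{up},k}^{t_1,t_3}$ (using the first inequality of \Cref{thm:monotonicity up eigen} on $t_1\leq t_3\leq t_4$ and the second on $t_1\leq t_2\leq t_3$); either route works and neither requires anything beyond citing the theorem.
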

\begin{proof}
By \Cref{thm:monotonicity up eigen} we have that $\lambda_{{q,\mathrm{up}},k}^{t_1,t_4}\geq \lambda_{{q,\mathrm{up}},k}^{t_1,t_3}$ and $\lambda_{{q,\mathrm{up}},k}^{t_1,t_3}\geq \lambda_{{q,\mathrm{up}},k}^{t_2,t_3}$. Then, $\lambda_{{q,\mathrm{up}},k}^{t_1,t_4}\geq \lambda_{{q,\mathrm{up}},k}^{t_1,t_3}\geq \lambda_{{q,\mathrm{up}},k}^{t_2,t_3}.$
\end{proof}

\begin{definition}[Interleaving distance between simplicial filtrations over $\mathbb{R}$]
Let $\mathbf{K} = \{K_t\}_{t\in \R}$ and $\mathbf{L} = \{L_t\}_{t\in\R}$ be two simplicial filtrations over $\mathbb{R}$  with the same underlying vertex set $V$ and the same index set $\R$. We define the interleaving distance between $\mathbf{K}$ and $\mathbf{L}$ by
\[d_\mathrm{I}^V\lc\mathbf{K},\mathbf{L}\rc\coloneqq\inf\left\{\eps\geq 0:\,\forall t, K_t\subseteq L_{t+\eps}\text{ and }L_t\subseteq K_{t+\eps}\right\},\]
where when we write the inclusion $K\subseteq L$, we implicitly require that $w^{K}=w^{L}|_K$.
\end{definition}

\begin{definition}[Interleaving distance between functions]
Let $\mathbf{Int}$ denote the set of closed intervals in $\R$. Let $f:\mathbf{Int}\rightarrow\R_{\geq 0}$ and $g:\mathbf{Int}\rightarrow\R_{\geq 0}$ be two non-negative functions. We then  define the interleaving distance between $f$ and $g$ by:
\[d_\mathrm{I}\lc f,g\rc\coloneqq\inf\left\{\eps\geq 0:\,\forall I\in\mathbf{Int}, f(I^\eps)\geq g(I)\text{ and }g(I^\eps)\geq f(I)\right\}.\]
\end{definition}
Above, for $\mathbf{Int}\ni I = [a,b]$ and $\eps >0$, we denoted $I^\eps \coloneqq [a-\eps,b+\eps].$

\begin{remark}
The stability theorem given below is structurally similar to claims about stability of the rank invariant, see \cite[Theorem 22]{puuska2020erosion} and  \cite[Remarks 4.10 and 4.11]{kim2020spatiotemporal}. 
\end{remark}

With these definitions we now obtain the following stability theorem:
\begin{theorem}[Stability theorem for up-persistent eigenvalues]\label{thm:stab-evals}
Let $\mathbf{K} = \{K_t\}_{t\in\R}$ and $\mathbf{L} = \{L_t\}_{t\in\R}$ be two simplicial filtrations over the same underlying vertex set $V$. Then,
\begin{equation}\label{eq:stability}
    d_\mathrm{I}\lc\lambda_{{q,\mathrm{up}},k}^{\mathbf{K}},\lambda_{{q,\mathrm{up}},k}^{\mathbf{L}}\rc\leq d_\mathrm{I}^V\lc\mathbf{K},\mathbf{L}\rc,
\end{equation}
where $\lambda_{{q,\mathrm{up}},k}^{\mathbf{K}}:\mathbf{Int}\rightarrow\R_{\geq 0}$ is defined by $\mathbf{Int}\ni I = [a,b]\mapsto \lambda_{{q,\mathrm{up}},k}^{a,b}(\mathbf{K})$.
\end{theorem}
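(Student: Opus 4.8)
The plan is to reduce the stability statement to the purely combinatorial fact that an $\eps$-interleaving of filtrations produces, for every closed interval $I=[a,b]$, a four-term chain of inclusions of simplicial complexes, and then to invoke the monotonicity result of \Cref{lm:four point ineq}. Concretely, unravelling the definition of $d_\mathrm{I}$, it suffices to show that for every $\eps > d_\mathrm{I}^V(\mathbf K,\mathbf L)$ and every $I=[a,b]\in\mathbf{Int}$ we have $\lambda_{q,\mathrm{up},k}^{\mathbf K}(I^\eps)\geq \lambda_{q,\mathrm{up},k}^{\mathbf L}(I)$ and, symmetrically, $\lambda_{q,\mathrm{up},k}^{\mathbf L}(I^\eps)\geq \lambda_{q,\mathrm{up},k}^{\mathbf K}(I)$. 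By symmetry of the two filtrations in the argument, I only need to prove the first inequality. So fix such an $\eps$ and an interval $I=[a,b]$, and write out what $\lambda_{q,\mathrm{up},k}^{\mathbf K}(I^\eps)$ and $\lambda_{q,\mathrm{up},k}^{\mathbf L}(I)$ mean: the former is the $k$-th smallest eigenvalue of $\Delta_{q,\mathrm{up}}^{K_{a-\eps},K_{b+\eps}}$, the latter is the $k$-th smallest eigenvalue of $\Delta_{q,\mathrm{up}}^{L_a,L_b}$.

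Next I would set up the four-term filtration. Since $\eps > d_\mathrm{I}^V(\mathbf K,\mathbf L)$, the interleaving gives $K_{a-\eps}\subseteq L_a$ and $L_b\subseteq K_{b+\eps}$; combined with $L_a\subseteq L_b$ (monotonicity of $\mathbf L$) this yields the chain of simplicial pairs
\[
K_{a-\eps}\;\hookrightarrow\; L_a\;\hookrightarrow\; L_b\;\hookrightarrow\; K_{b+\eps},
\]
all over the common vertex set $V$. This is exactly the hypothesis of \Cref{lm:four point ineq} with $(t_1,t_2,t_3,t_4)$ corresponding to $(K_{a-\eps},L_a,L_b,K_{b+\eps})$ — the one subtlety being that \Cref{lm:four point ineq} is stated for a filtration $\{K_{t_i}\}$ indexed by four real numbers, whereas here the four complexes come from two different filtrations; but the proof of \Cref{lm:four point ineq} only uses the inclusions, and one can simply re-index these four complexes by any four increasing reals, so the lemma applies verbatim. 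Applying it gives
\[
\lambda_{q,\mathrm{up},k}^{K_{a-\eps},K_{b+\eps}} \;\geq\; \lambda_{q,\mathrm{up},k}^{L_a,L_b}
\]
for each $k=1,\ldots,n_q^{K_{a-\eps}}$, which is precisely $\lambda_{q,\mathrm{up},k}^{\mathbf K}(I^\eps)\geq \lambda_{q,\mathrm{up},k}^{\mathbf L}(I)$. The symmetric inequality follows by exchanging the roles of $\mathbf K$ and $\mathbf L$ (using $L_{a-\eps}\subseteq K_a$ and $K_b\subseteq L_{b+\eps}$). Taking the infimum over all such $\eps$ then yields \Cref{eq:stability}.

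I expect the main obstacle to be a bookkeeping one rather than a conceptual one: I must make sure the index $k$ is in the valid range throughout, i.e. that $n_q$ of the smaller complex in each pair is large enough, and handle the degenerate cases where one of the inclusions is an equality or where $n_q^{K_{a-\eps}} = n_q^{K_{b+\eps}}$ (so that \Cref{thm:persis-Laplacian-schur-formula}, invoked inside the proof of \Cref{lm:four point ineq} via \Cref{thm:monotonicity up eigen}, is not directly applicable and one falls back on \Cref{ex:trivial cases}). I would also need a sentence addressing the case $d_\mathrm{I}^V(\mathbf K,\mathbf L)=\infty$ (trivial) and the fact that the inequalities are non-strict, so passing to the infimum over $\eps$ is legitimate and gives a closed condition in the definition of $d_\mathrm{I}$. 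A further minor point: the interval $I^\eps=[a-\eps,b+\eps]$ must lie in the index set, which here is all of $\R$, so this is automatic; if one wanted to state the theorem for filtrations indexed by a proper subset $T\subset\R$ one would need to be more careful, but the theorem as stated fixes the index set to be $\R$.
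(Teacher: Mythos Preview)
Your proposal is correct and follows essentially the same approach as the paper: both arguments build the four-term chain of inclusions from the $\eps$-interleaving and invoke \Cref{lm:four point ineq} to obtain the required eigenvalue inequality on each interval, then pass to the infimum. The only cosmetic difference is which of the two symmetric chains is written out first (you use $K_{a-\eps}\hookrightarrow L_a\hookrightarrow L_b\hookrightarrow K_{b+\eps}$, the paper uses $L_{a-\eps}\hookrightarrow K_a\hookrightarrow K_b\hookrightarrow L_{b+\eps}$), and your remarks about the re-indexing subtlety in \Cref{lm:four point ineq} and the degenerate/bookkeeping cases are more explicit than the paper's but do not constitute a different route.
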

\begin{proof}
If $d_\mathrm{I}^V\lc\mathbf{K},\mathbf{L}\rc=\infty$, then \Cref{eq:stability} holds trivially. Otherwise we assume there exists $\eps\geq 0$ such that $K_t\subseteq L_{t+\eps}$ and $L_t\subseteq K_{t+\eps}$ for all $t\in\R$. For any $I=[a,b]\in\mathbf{Int}$, then $L_{a-\eps}\subseteq K_a\subseteq K_b\subseteq L_{b+\eps}$ is a simplicial filtration related to the following interleaving diagram:
\begin{tikzcd}
&K_a\arrow[hookrightarrow]{r} &  K_b\arrow[hookrightarrow]{dr}&\\
L_{a-\varepsilon}\arrow[hookrightarrow]{ur}\arrow[hookrightarrow]{rrr}&&&L_{b+\varepsilon}\\
\end{tikzcd}. By \Cref{lm:four point ineq}, $\lambda_{{q,\mathrm{up}},k}^{L_{a-\eps},L_{b+\eps}}\geq \lambda_{{q,\mathrm{up}},k}^{K_a,K_b}$. This implies that  $\lambda_{{q,\mathrm{up}},k}^{\mathbf{L}}(I^\eps)\geq\lambda_{{q,\mathrm{up}},k}^{\mathbf{K}}(I)$ for all $I\in\mathbf{Int}$. Similarly, $\lambda_{{q,\mathrm{up}},k}^{\mathbf{K}}(I^\eps)\geq\lambda_{{q,\mathrm{up}},k}^{\mathbf{L}}(I)$ for all $I\in\mathbf{Int}$. Therefore, $d_\mathrm{I}\lc\lambda_{{q,\mathrm{up}},k}^{\mathbf{K}},\lambda_{{q,\mathrm{up}},k}^{\mathbf{L}}\rc\leq \eps$ and thus $d_\mathrm{I}\lc\lambda_{{q,\mathrm{up}},k}^{\mathbf{K}},\lambda_{{q,\mathrm{up}},k}^{\mathbf{L}}\rc\leq d_\mathrm{I}^V\lc\mathbf{K},\mathbf{L}\rc.$
\end{proof}

%%%%%%%%%%%%%%%%%%%%%%%%%%%%%%%%%%%%
\section{Discussion}
As a natural progression of the ideas in this paper, where the persistent Laplacian is formulated for inclusion maps, it seems interesting to extend it to the setting of simplicial maps -- a natural extension which would enable other applications such as graph sparsification where clusters of vertices might be collapsed between consecutive levels of a filtration.  

A notion of persistent Laplacian for pairs of manifolds also related by inclusion maps was developed in \cite{chen2017evolutionary}. In the spirit of our paper, it is then natural to attempt to relate the version of the persistent Laplacian from  \cite{chen2017evolutionary} to notions of Schur complement of operators (e.g., \cite{friedrich2018generalized}) in a suitable sense, which may also be related to Poincar\'e-Steklov operators \cite{lebedev1983poincare}.

The Cheeger inequality has both been generalized to higher order (eigenvalues of graph Laplacians) in \cite{lee2014multiway} and to higher dimensional simplicial complexes\cite{steenbergen2014cheeger,gundert2015higher}. This naturally suggests us to consider suitable extensions of our persistent Cheeger inequality to these cases which will provide interpretation of the persistent Laplacian spectrum.

Finally, it is of clear interest to elucidate   stability properties of invariants associated to the persistent Laplacian which generalize the results we proved in Theorem \ref{thm:stab-evals}.

\paragraph{Acknowlegements.} This work is partially supported by National Science Foundation (NSF) under grants CCF-1740761, DMS-1723003, RI-1901360, RI-2050360 and OAC-2039794.
%%
%%%%%%%%%%%%%%%%%%%%%%%%%%%%%%%%%%%

\bibliography{persistLaplacian-bib}
\appendix

%%%%%%%%%%%%%%%%%%%%%%%%%%%%%%%%%%%%%%%%%%%
\section{Relegated proofs}\label{sec:proofs from the paper}
\begin{proof}[Proof of \Cref{lm:connected component}]
This follows directly from the following obvious observations
\begin{enumerate}
    \item $C_{q-1}^K=\bigoplus_{i=1}^mC_{q-1}^{K_i}$, $C_q^K=\bigoplus_{i=1}^mC_q^{K_i}$ and $C_{q+1}^{L,K}=\bigoplus_{i=1}^mC_{q+1}^{L_i,K_i}$.
    \item $\partial_q^K=\bigoplus_{i=1}^m\partial_q^{K_i}$ and $\partial_{q+1}^{L,K}=\bigoplus_{i=1}^m\partial_{q+1}^{L_i,K_i}$.
\end{enumerate}
\end{proof}

\begin{proof}[Proof of \Cref{thm:connected component}]
For item $1$, let $c^K_0\coloneqq\sum_{v\in S_0^K}w^K_0(v)[v]\in C_0^K$. We prove that $\Delta_0^{K,L}c^K_0=0$ and thus $\lambda_{0,1}^{K,L}=0$.
Set $c^L_0\coloneqq \sum_{v\in S_0^L}w^L_0(v)[v]$. Then,
\[c^L_0=\sum_{v\in S_0^L\backslash {S}_0^K}w^L_0(v)[v]+ \sum_{v\in {S}_0^K}w^K_0(v)[v]=\sum_{v\in S_0^L\backslash {S}_0^K}w^L_0(v)[v]+ c^K_0.\]
For any $c_1\in C_1^{L,K}$, we have the following:
\begin{align*}
    \left\langle \lc\partial_1^{L,K}\rc^*c^K_0,c_1\right\rangle_{w_1^{L,K}}=\left\langle c^K_0,\partial_1^{L,K}c_1\right\rangle_{w_0^{K}}=\left\langle c^L_0,\partial_1^{L,K}c_1\right\rangle_{w_0^{L}}-\left\langle \sum_{v\in S_0^L\backslash {S}_0^K}w^L_0(v)[v],\partial_1^{L,K}c_1\right\rangle_{w_0^{L}},
\end{align*}
where $\langle\cdot,\cdot\rangle_{w_1^{L,K}}$ is the restriction of $\langle\cdot,\cdot\rangle_{w_1^{L}}$ on $C^{L,K}_1$ and we use the fact $w_0^K=w_0^L|_{S_0^K}$ in the rightmost equality.

Since $\partial_1^{L,K}c_1\in C_0^K$, we have that $\left\langle \sum_{v\in S_0^L\backslash {S}_0^K}w^L_0(v)[v],\partial_1^{L,K}c_1\right\rangle_{w_0^{L}}=0$. Now, assume that $c_1=x_1[e_1]+\ldots +x_\ell[e_\ell]$ where each $e_i\in S_1^L$ and $x_i\in\mathbb{R}$. Since $\partial_1^{L,K} [e_i]=\partial_1^L [e_i]=[v_i]-[w_i]$ for some $v_i,w_i\in {S}_0^L$, we have that $\left\langle c_0^L,\partial_1^{L,K} [e_i]\right\rangle_{w_0^L}=0$ for each $i=1,\ldots,\ell$ and thus $\left\langle c^L_0,\partial_1^{L,K} c_1\right\rangle_{w_0^L}=0$. It then follows that 
\[\left\langle \lc\partial_1^{L,K}\rc^*c_0^K,c_1\right\rangle_{w_1^{L,K}}=0,\,\forall c_1\in C_1^{L,K},\]
and thus $\Delta_0^{K,L}c_0^K=\partial_1^{L,K}\lc\partial_1^{L,K}\rc^* c_1 = 0$.

Now, assume that $L$ is connected. Suppose that there exists $0\neq c_0\in C_0^K$ such that $\Delta_0^{L,K}c_0=0$. Then, $\lc\partial_1^{L,K}\rc^*c_0=0$. For any $v,w\in S_0^K$, since $L$ is connected, there exists a $1$-chain $c_1\in C_1^L$ such that $\partial_1^L c_1=[v]-[w]$ (for example, one can take a path in $L$ connecting $v$ and $w$ and let $c_1$ be the corresponding $1$-chain). Then, $c_1\in C_1^{L,K}$ and $\partial_1^{L,K }c_1=[v]-[w]$. Note that,
\[\left\langle c_0,[v]-[w]\right\rangle_{w_0^{K}}=\left\langle c_0,\partial_1^{L,K}c_1\right\rangle_{w_0^{K}}=\left\langle \lc\partial_1^{L,K}\rc^*c_0,c_1\right\rangle_{w_1^{L,K}}=0.\]
This implies that $\left\langle c_0,[v]\right\rangle_{w_0^{K}}=\left\langle c_0,[w]\right\rangle_{w_0^{K}}$ and thus there exists $\alpha\in\mathbb{R}$ such that $\left\langle c_0,[v]\right\rangle_{w_0^{K}}=\alpha$ for each $v\in S_0^K$. Then, $c_0=\alpha\cdot c^K_0$, implying that the multiplicity of $0$ eigenvalue is $1$.

For item $2$, suppose $K$ intersects exactly $m$ connected components of $L$, denoted by $L_1,\ldots,L_m$. Then, by \Cref{lm:connected component} we have that $\Delta_0^{K,L}=\bigoplus_{i=1}^m\Delta_0^{K_i,L_i}$. Then, the spectrum of $\Delta_0^{K,L}$ is the multiset union of the spectra of $\Delta_0^{K_i,L_i}$s. By item 1 and item 2 we have that the multiplicity of zero eigenvalue of $\Delta_0^{K,L}$ is then exactly $m$. 
\end{proof}

\begin{proof}[Proof of \Cref{thm:pers_interior}]
By abuse of the notation, we represent each $c^L\in C_q^L$ by a vector $c^L\in\mathbb{R}^{n_q^L}$. Then, $c^K$ corresponds to the vector $c^K=c^L\lc[n_q^K]\rc\in\mathbb{R}^{n_q^K}$. By \Cref{thm:persis-Laplacian-schur-formula}, the matrix representation $\lap_{q,\mathrm{up}}^{K,L}$ of $\Delta_{q,\mathrm{up}}^{K,L}$ can be computed as follows:
\[\lap_{q,\mathrm{up}}^{K,L}=\lap_{q,\mathrm{up}}^L\lc [n_q^K],[n_q^K]\rc  -\lap_{q,\mathrm{up}}^L\lc [n_q^K],I_K^L\rc\lap_{q,\mathrm{up}}^L\lc I_K^L,I_K^L\rc^\dagger \lap_{q,\mathrm{up}}^L\lc I_K^L,[n_q^K]\rc,\]
where $I_K^L=[n_q^L]\backslash[n_q^K]$. 

Suppose $\sigma_i\in S_q^K$ is an interior simplex, then the $i$-th row of $\lap_{q,\mathrm{up}}^L\lc [n_q^K],I_K^L\rc$ is $0$ (cf. \Cref{sec:computation of up and down}). Then, 
\begin{enumerate}
    \item the $i$-th entry of $\lap_{q,\mathrm{up}}^L\lc [n_q^K],[n_q^K]\rc c^K$ exactly coincides with the $i$-th entry of $\lap_{q,\mathrm{up}}^Lc^L$;
    \item the $i$-th row of $\lap_{q,\mathrm{up}}^L\lc [n_q^K],I_K^L\rc\lap_{q,\mathrm{up}}^L\lc I_K^L,I_K^L\rc^\dagger \lap_{q,\mathrm{up}}^L\lc I_K^L,[n_q^K]\rc$ is $0$.
\end{enumerate}
Therefore, the $i$-th entry of $\lap_{q,\mathrm{up}}^L c^L$ ($=w_q^L(\sigma_i)\left\langle \Delta_{q,\mathrm{up}}^Lc^L, [\sigma_i] \right\rangle_{w_{q}^L}$) agrees with the $i$-th entry of $\lap_{q,\mathrm{up}}^{K,L}c^K$ ($=w_q^K(\sigma_i)\left\langle \Delta_{q,\mathrm{up}}^{K,L}c^K, [\sigma_i] \right\rangle_{w_{q}^K}$). Then by $w_q^K(\sigma_i)=w_q^L(\sigma_i)$, we have that
\[\left\langle \Delta_{q,\mathrm{up}}^Lc^L, [\sigma_i] \right\rangle_{w_{q}^L}=\left\langle \Delta_{q,\mathrm{up}}^{K,L}c^K, [\sigma_i] \right\rangle_{w_{q}^K}. \]
\end{proof}

\begin{proof}[Proof of \Cref{thm:pers-betti-pers-lap}]
{First, we have the following elementary linear algebra fact: The isomorphism follows from \cite[Theorem 5.3]{lim2020hodge} and the equality follows from \cite[Theorem 5.2]{lim2020hodge}.}
\begin{claim}\label{claim:kernel of bb+aa}
Let $A\in\mathbb{R}^{m\times n}$  and let $B\in \mathbb{R}^{n\times p}$. Suppose $AB=0$, then we have
\[\mathrm{ker}(A)/\mathrm{im}(B)\cong \ker(A)\cap \ker\lc B^\T\rc=\mathrm{ker}\lc BB^\T+A^\T A\rc, \]
where $\cong$ denotes isomorphism between vector spaces.
\end{claim}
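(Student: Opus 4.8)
The plan is to prove \Cref{claim:kernel of bb+aa} in two stages, corresponding to the two claimed identifications. The harder stage is showing $\ker(BB^\T + A^\T A) = \ker(A)\cap\ker(B^\T)$; the isomorphism $\ker(A)/\operatorname{im}(B) \cong \ker(A)\cap\ker(B^\T)$ is then a routine orthogonal-complement argument. I would organize it this way so that the key linear-algebra fact (the $BB^\T + A^\T A$ identity) is isolated and done first.

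First I would prove $\ker(BB^\T + A^\T A) = \ker(A)\cap\ker(B^\T)$. The inclusion $\supseteq$ is immediate: if $Ax = 0$ and $B^\T x = 0$ then $(BB^\T + A^\T A)x = B(B^\T x) + A^\T(Ax) = 0$. For $\subseteq$, suppose $(BB^\T + A^\T A)x = 0$. Take the inner product with $x$: since $AB = 0$ we have $A^\T$ and $B$ have ``orthogonal ranges'' in a suitable sense — more directly, $\langle (BB^\T + A^\T A)x, x\rangle = \langle B^\T x, B^\T x\rangle + \langle Ax, Ax\rangle = \|B^\T x\|^2 + \|Ax\|^2 = 0$, which forces $B^\T x = 0$ and $Ax = 0$. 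This is clean and does not even require $AB = 0$ for this particular identity (that hypothesis is needed for the quotient identification). I would present this as the first paragraph of the proof.

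Next I would handle $\ker(A)/\operatorname{im}(B) \cong \ker(A)\cap\ker(B^\T)$. Here the hypothesis $AB = 0$ is essential: it guarantees $\operatorname{im}(B)\subseteq\ker(A)$, so the quotient makes sense. The standard move is to note that inside the inner product space $\mathbb{R}^n$, restricting to the subspace $\ker(A)$, the orthogonal complement of $\operatorname{im}(B)$ within $\ker(A)$ is $\ker(A)\cap\operatorname{im}(B)^\perp = \ker(A)\cap\ker(B^\T)$, using $\operatorname{im}(B)^\perp = \ker(B^\T)$. Since for any subspace $U$ of a finite-dimensional inner product space and any subspace $W\subseteq U$ one has $U/W \cong W^{\perp_U}$ (the orthogonal complement of $W$ taken inside $U$), we get $\ker(A)/\operatorname{im}(B)\cong \ker(A)\cap\ker(B^\T)$. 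I expect this to be the less delicate of the two parts.

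The main obstacle, such as it is, is mostly bookkeeping: making sure the orthogonal-complement statement is invoked relative to the subspace $\ker(A)$ rather than all of $\mathbb{R}^n$, and being careful that $\operatorname{im}(B)^\perp = \ker(B^\T)$ holds with respect to the canonical inner product (which is the one implicitly in force, since the claim refers to $B^\T$ rather than an adjoint relative to a general inner product). After the claim is established, I would apply it with $A = B_q^K$ (the matrix of $\partial_q^K$) and $B = B_{q+1}^{L,K}$ (the matrix of $\partial_{q+1}^{L,K}$), so that $BB^\T + A^\T A = \lap_q^{K,L}$ by \Cref{thm:basis-persistent-Laplcacian}, and $\ker(A)/\operatorname{im}(B)$ is precisely the $q$-th persistent homology group from $K$ to $L$ — this identification of $\ker(\partial_q^K)/\operatorname{im}(\partial_{q+1}^{L,K})$ with the persistent homology group, via the image of $H_q(K)\to H_q(L)$, is the one remaining ingredient and is where I would expect to spend a little care relating $C_{q+1}^{L,K}$ and the usual description of persistent homology.
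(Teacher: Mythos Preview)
Your proof of the claim itself is correct. The paper's own proof is just a two-line citation to \cite[Theorems~5.2 and~5.3]{lim2020hodge}, so what you have written is effectively an explicit, self-contained unpacking of those references: the inner-product computation giving $\ker(BB^\T+A^\T A)=\ker(A)\cap\ker(B^\T)$ and the orthogonal-complement identification of $\ker(A)/\operatorname{im}(B)$ with $\ker(A)\cap\ker(B^\T)$ are precisely the standard finite-dimensional Hodge arguments those theorems encode. Your observation that the equality part does not actually use $AB=0$ is also correct.

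One small point about your final paragraph (which concerns the application to \Cref{thm:pers-betti-pers-lap} rather than the claim proper): to get $BB^\T+A^\T A=\lap_q^{K,L}$ with $B=B_{q+1}^{L,K}$, you need the factor $(Z^\T Z)^{-1}$ from \Cref{thm:basis-persistent-Laplcacian} to disappear. The paper handles this by explicitly choosing an \emph{orthonormal} basis of $C_{q+1}^{L,K}$, so that $Z^\T Z=I$; you should make that choice explicit when you carry out the application.
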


The image of $H_q(K)$ under the inclusion map inside $H_q(L)$ is exactly $\ker\left(\partial_q^K\right)/\mathrm{im}\left(\partial_{q+1}^{L,K}\right)$. Let $B_q^K$ be the matrix representation of $\partial_q^K$. Choose an orthonormal basis of $C_{q+1}^{L,K}$ and let $B_{q+1}^{L,K}$ be the corresponding matrix representation of $\partial_{q+1}^{L,K}$ in this basis. Then, by \Cref{thm:weighted-basis-persistent-Laplcacian} 
\begin{align*}
    \lap_q^{K,L}=& B_{q+1}^{L,K}\left( B_{q+1}^{L,K}\right)^\mathrm{T}\lc W_q^K\rc^{-1}+W_q^K\left( B_q^K\right)^\mathrm{T}\lc W_{q-1}^K\rc^{-1} B_q^K\\
    =&\lc W_q^K\rc^{\frac{1}{2}}\lc W_q^K\rc^{-\frac{1}{2}}B_{q+1}^{L,K}\left(\lc W_q^K\rc^{-\frac{1}{2}} B_{q+1}^{L,K}\right)^\mathrm{T}\lc W_q^K\rc^{-\frac{1}{2}} \\
    +&\lc W_q^K\rc^{\frac{1}{2}}\left(\lc W_{q-1}^K\rc^{-\frac{1}{2}} B_q^K\lc W_q^K\rc^{\frac{1}{2}}\right)^\mathrm{T}\lc W_{q-1}^K\rc^{-\frac{1}{2}} B_q^K\lc W_q^K\rc^{\frac{1}{2}}\lc W_q^K\rc^{-\frac{1}{2}}.
\end{align*}

Let $A\coloneqq\lc W_{q-1}^K\rc^{-\frac{1}{2}} B_q^K\lc W_q^K\rc^{\frac{1}{2}}$ and $B\coloneqq \lc W_q^K\rc^{-\frac{1}{2}}B_{q+1}^{L,K}$. Then,
\begin{enumerate}
    \item $AB=\lc W_{q-1}^K\rc^{-\frac{1}{2}} B_q^K\lc W_q^K\rc^{\frac{1}{2}} \lc W_q^K\rc^{-\frac{1}{2}}B_{q+1}^{L,K}=\lc W_{q-1}^K\rc^{-\frac{1}{2}} B_q^KB_{q+1}^{L,K}=0.$
    \item $\lap_q^{K,L}=\lc W_q^K\rc^{\frac{1}{2}}\lc BB^\T+A^\T A \rc\lc W_q^K\rc^{-\frac{1}{2}}$
\end{enumerate}
Since both $W_{q-1}^K$ and $W_q^K$ are non-singular, we have that $\ker(A)\cong\ker\left(B_q^K\right) $, $\mathrm{im}(B)\cong\mathrm{im}\left(B_{q+1}^{L,K}\right)$ and $\ker\lc \lap_q^{K,L}\rc\cong\ker\left(BB^\T+A^\T A\right) $. It then follows from \Cref{claim:kernel of bb+aa} that 
\[\beta_q^{K,L}=\dim\lc \ker\left(B_q^K\right)/\mathrm{im}\left(B_{q+1}^{L,K}\right)\rc=\dim \lc\ker\lc \lap_q^{K,L}\rc\rc=\mathrm{nullity}\lc \Delta_q^{K,L}\rc.\]
\end{proof}

\begin{proof}[Proof of \Cref{lm:image in subspace}]
Consider $\pi^\perp\circ\partial_{q+1}^L:C_{q+1}^L\rightarrow \lc C_q^K\rc^\perp$ where $\pi^\perp:C_q^L\rightarrow\lc C_q^K\rc^\perp$ is the orthogonal projection. Then, $D_{q+1}^L$ is the matrix representation of $\pi^\perp\circ\partial_{q+1}^L$ and $C_{q+1}^{L,K}=\ker\lc\pi^\perp\circ\partial_{q+1}^L\rc$. So $R_{q+1}^L=D_{q+1}^LY$ is the matrix representation of $\pi^\perp\circ\partial_{q+1}^L$ after a change of basis of $C_{q+1}^L$. 
\begin{enumerate}
    \item If $I=\emptyset$, then since $R_{q+1}^L$ is column reduced, $R_{q+1}^L$ has full column rank. This implies that $\pi^\perp\circ\partial_{q+1}^L:C_{q+1}^L\rightarrow \lc C_q^K\rc^\perp$ is injective and thus $C_{q+1}^{L,K}=\ker\lc\pi^\perp\circ\partial_{q+1}^L\rc=\{0\}$.
    \item If $I\neq\emptyset$, then the column space of $Z=Y(:,I)$ coincides with $\ker\lc\pi^\perp\circ\partial_{q+1}^L\rc=C_{q+1}^{L,K}$. Since $Y$ is non-singular, $Z$ has full column rank. Therefore, the columns of $Z$ constitute a basis of $C_{q+1}^{L,K}.$
\end{enumerate}

Obviously, $B_{q+1}^L\lc[n_q^K],:\rc$ is the matrix representation of $\pi\circ\partial_{q+1}^L:C_{q+1}^L\rightarrow C_q^K$ where $\pi:C_q^L\rightarrow C_q^K$ is the orthogonal projection. Therefore, $\lc B_{q+1}^LY\rc\lc[n_q^K],:\rc= B_{q+1}^L\lc[n_q^K],:\rc Y$ is the matrix representation of $\pi\circ\partial_{q+1}^L$ under the new basis $Y$ of $C_{q+1}^L$. Now, assume that $I\neq \emptyset$. Since the column space of $Z=Y(:,I)$ is $C_{q+1}^{L,K}$, we have that $B_{q+1}^{L,K}=\lc B_{q+1}^LY\rc\lc[n_q^K],I\rc$ is the matrix representation of $\pi\circ\partial_{q+1}^L|_{C_{q+1}^{L,K}}=\partial_{q+1}^{L,K}$.
\end{proof}
%%%%%%%%%%%%%%%%%%%%
\begin{proof}[Proof of \Cref{lm:psd-proper}]
Since $P$ is positive semi-definite, there exists a square matrix $E\in\R^{n\times n}$ such that $P = EE^\T$. Assume that $D$ has size $d\times d$. Let $E_1\coloneqq E([n-d],:)$ and let $E_2\coloneqq E([n]\backslash[n-d],:)$. Then, 
\[M = W^{-1}PW=\begin{pmatrix}W_1^{-1}E_1E_1^\T W_1& W_1^{-1}E_1E_2^\T W_2\\W_2^{-1}E_2E_1^\T W_1&W_2^{-1}E_2E_2^\T W_2\end{pmatrix}=\begin{pmatrix}W_1^{-1}A W_1& W_1^{-1}B W_2\\W_2^{-1}C W_1&W_2^{-1}D W_2\end{pmatrix}\]
Since both $W_1$ and $W_2$ are non-singular, it is easy to see that $\ker(W_2^{-1}DW_2)=\ker\lc E_2^\T W_2\rc\subseteq\ker(W_1^{-1}BW_2)$ and $\ker\lc (W_2^{-1}DW_2)^\T\rc=\ker\lc E_2^\T W_2^{-1}\rc\subseteq\ker\lc (W_2^{-1}CW_1)^\T\rc$. Therefore, $W_2^{-1}DW_2$ is proper in $M$.

Note that 
\begin{align*}
    W_1^{-1}(P/D)W_1&=W_1^{-1}\lc A -BD^\dagger C \rc W_1\\
    &=W_1^{-1}\lc E_1E_1^\T -E_1E_2^\T \lc E_2E_2^\T \rc^\dagger E_2E_1^\T \rc W_1 
\end{align*}
and
\begin{align*}
    M/(W_2^{-1}DW_2)&=W_1^{-1}E_1E_1^\T W_1-W_1^{-1}E_1E_2^\T W_2\lc W_2^{-1}E_2E_2^\T W_2\rc^\dagger W_2^{-1}E_2E_1^\T W_1\\
    &=W_1^{-1}\lc E_1E_1^\T -E_1E_2^\T W_2\lc W_2^{-1}E_2E_2^\T W_2\rc^\dagger W_2^{-1} E_2E_1^\T \rc W_1
\end{align*}
To prove that $M/(W_2^{-1}DW_2)=W_1^{-1}(P/D)W_1$, we then only need to show that 
\[E_2^\T \lc E_2E_2^\T \rc^\dagger E_2=E_2^\T W_2\lc W_2^{-1}E_2E_2^\T W_2\rc^\dagger W_2^{-1} E_2.\]
To this end, we need the following elementary facts from linear algebra and interested readers are referred to \cite{barata2012moore} for a proof:
\begin{claim}\label{claim:range kernel}
Fix any $n\times m$ real matrix $H$. Then,
$H H^\dagger=\pi_{\mathrm{im}(H)}=\mathbb{I}_n-\pi_{\ker(H^\T)}$, where $\mathbb{I}_n$ is the $n$-dim identity matrix, and for any subspace $S\subseteq \R^n$, $\pi_S\in\R^{n\times n}$ denotes the orthogonal projector onto  $S$. Similarly, $H^\dagger H=\pi_{\mathrm{im}(H^\T)}=\mathbb{I}_m-\pi_{\ker(H)}$.
\end{claim}

Therefore,
\begin{align*}
    E_2^\dagger E_2\cdot E_2^\T W_2\lc W_2^{-1}E_2E_2^\T W_2\rc^\dagger W_2^{-1} E_2&=E_2^\dagger W_2\lc W_2^{-1} E_2 E_2^\T W_2\rc\lc W_2^{-1}E_2E_2^\T W_2\rc^\dagger W_2^{-1} E_2\\
    &=E_2^\dagger W_2\lc \mathbb{I}_{d}-\pi_{\ker(W_2E_2E_2^\T W_2^{-1})}\rc W_2^{-1} E_2\\
    &=E_2^\dagger W_2\lc \mathbb{I}_{d}-\pi_{\ker(E_2^\T W_2^{-1})}\rc \lc E_2^\T W_2^{-1}\rc^\T\\
    &=E_2^\dagger W_2 \lc E_2^\T W_2^{-1}\rc^\T= E_2^\dagger E_2.
\end{align*}
Similarly, we have that 
\[E_2^\dagger E_2\cdot E_2^\T \lc E_2E_2^\T \rc^\dagger E_2=E_2^\dagger E_2.\]
Note that $E_2^\dagger E_2=\pi_{\mathrm{im}(E_2^\T)}$. Then, $E_2^\dagger E_2\cdot E_2^\T=E_2^\T$ and thus
\[E_2^\T \lc E_2E_2^\T \rc^\dagger E_2=E_2^\dagger E_2=E_2^\T W_2\lc W_2^{-1}E_2E_2^\T W_2\rc^\dagger W_2^{-1} E_2.\]
This concludes the proof.
\end{proof}

\begin{proof}[Proof of \Cref{lm:eigen-interlacing}]
Note that $M=W^{-1}PW$ is similar to $P$, $M/(W_2^{-1}DW_2)=W_1^{-1}(P/D)W_1$ is similar to $P/D$ (cf. \Cref{lm:psd-proper}) and $W_1^{-1}AW_1$ is similar to $A$. Then, $\lambda_k(M)=\lambda_k(P)$, $\lambda_k(M/(W_2^{-1}DW_2))=\lambda_k(P/D)$ and $\lambda_k(W_1^{-1}AW_1)=\lambda_k(A)$ for any $1\leq k\leq n-d$. Therefore, we only need to consider the case when $W=\mathbb{I}_n$ is the identity matrix. 

Now, $M=P$. If $D$ is non-singular, then $\ker(D)=0$ and thus $D$ is obviously proper in $M$. A proof of the interlacing property of the case can be found in {\cite[Theorem 3.1]{fan2002schur}}. 

Now, we assume that $D$ is singular. Let $D=\sum_{i=1}^d\lambda_i\varphi_i\varphi_i^\T$ be the eigen-decomposition of $D$ where $0\leq \lambda_1\leq\ldots\leq\lambda_d$ are eigenvalues and $\varphi_i$s are their corresponding eigenvectors in $\mathbb{R}^d$. Assume that $0=\lambda_1=\ldots=\lambda_\ell$ are all the 0 eigenvalues of $D$. For each $\eps>0$, define $D_\eps\coloneqq\eps\cdot\sum_{i=1}^\ell\varphi_i\varphi_i^\T+\sum_{i=\ell+1}^d\lambda_i\varphi_i\varphi_i^\T$. Then, $D_\eps$ is positive definite and in particular, non-singular. Define $M_\eps\coloneqq\begin{pmatrix}
A &  B\\
C & D_\eps
\end{pmatrix}$, which is still positive semi-definite. Then, since $D_\eps$ is non-singular, we have that
\[\lambda_k(M_\eps)\leq \lambda_k(M_\eps/D_\eps)\leq \lambda_k(A),\quad \forall 1\leq k\leq n-d.\]
Note that $D^\dagger=\sum_{i=\ell+1}^d\frac{1}{\lambda_i}\varphi_i\varphi_i^\T$. Then,
\[D_\eps^{-1}=\frac{1}{\eps}\cdot\sum_{i=1}^\ell\varphi_i\varphi_i^\T+\sum_{i=\ell+1}^d\frac{1}{\lambda_i}\varphi_i\varphi_i^\T=\frac{1}{\eps}\cdot\sum_{i=1}^\ell\varphi_i\varphi_i^\T+D^\dagger.\]
For each $i=1,\ldots,\ell$, $D\varphi_i=0$. Since $D$ is proper, then $B\varphi_i=0$ for each $i=1,\ldots,\ell$. Then, $BD_\eps^{-1}=BD^\dagger$ and thus $BD_\eps^{-1}C=BD^\dagger C$. This implies that $M/D=M_\eps/D_\eps$. Since $M_\eps$ converges to $M$ as $\eps\rightarrow 0$, by continuity of eigenvalues, we have that
\[\lambda_k(M)\leq \lambda_k(M/D)\leq \lambda_k(A),\quad \forall 1\leq k\leq n-d.\]
\end{proof}
%%%%%%%%%%%%%%%
\begin{proof}[Proof of \Cref{lm:schur-complement-matrix-operation}]
We first assume that $B_2$ has full column rank. Then, $\rank(M)\leq\rank(B)=\rank(B_2)$. By \Cref{lm:schur-complement-rank}, we have that
\begin{align*}
    &\rank(B_2)\geq \rank(M)\geq\rank(M_{22})+\rank(M/M_{22})\\
    =&\rank(B_2B_2^\mathrm{T}W_2)+\rank(M/M_{22})=\rank(B_2)+\rank(M/M_{22}).
\end{align*}
Therefore, $\rank(M/M_{22})=0$ and thus $M/M_{22}=0$.

Now, we assume that $\rank(B_2)<m$. We let $X\coloneqq B_1Y_1\lc Y_1^\mathrm{T}Y_1\rc^{-1}(B_1Y_1)^\mathrm{T}W_1$. We first assume that $Y$ is an orthonormal matrix. Then, $X=B_1Y_1Y_1^\mathrm{T}B_1^\T W_1. $

Now, we compute $M$ in an alternative way:
\begin{align*}
    M=BB^\T W=BY Y^\T B^\T W=\begin{pmatrix}
B_1Y_1Y_1^\T B_1^\T W_1+ B_1Y_2Y_2^\T B_1^\T W_1& B_1Y_2Y_2^\T  B_2^\T W_2\\
B_2Y_2Y_2^\T B_1^\T W_1 & B_2Y_2Y_2^\T B_2^\T W_2
\end{pmatrix}
\end{align*}
where we have used that $B_2Y_1=0$. Consider 
\[\begin{pmatrix}
B_1Y_2Y_2^\T B_1^\T W_1& B_1Y_2Y_2^\T B_2^\T W_2\\
B_2Y_2Y_2^\T B_1^\T W_1 & B_2Y_2Y_2^\T B_2^\T W_2
\end{pmatrix}=\begin{pmatrix}
B_1Y_2\\
B_2Y_2
\end{pmatrix}\begin{pmatrix}
B_1Y_2\\
B_2Y_2
\end{pmatrix}^\T \begin{pmatrix}
W_1 & 0\\
0 & W_2
\end{pmatrix}. \]
Since $B_1Y_2$ is of full column rank, by \Cref{lm:schur-complement-rank} again we have that
\begin{align*}
    &\rank(B_1Y_2)\geq \rank \begin{pmatrix}
B_1Y_2Y_2^\T B_1^\T W_1& B_1Y_2Y_2^\T B_2^\T W_2\\
B_2Y_2Y_2^\T B_1^\T W_1 & B_2Y_2Y_2^\T B_2^\T W_2
\end{pmatrix}\\
\geq &\rank\lc B_1Y_2Y_2^\T B_1^\T W_1\rc + \rank \left(B_1Y_2Y_2^\T B_1^\T W_1 - B_1Y_2Y_2^\T B_2^\T W_2\lc B_2Y_2Y_2^\T B_2^\T W_2\rc^\dagger  B_2Y_2Y_2^\T B_1^\T W_1\right)\\
=&\rank(B_1Y_2) + \rank \left(B_1Y_2Y_2^\T B_1^\T W_1 - B_1Y_2Y_2^\T B_2^\T W_2\lc B_2Y_2Y_2^\T B_2^\T W_2\rc^\dagger  B_2Y_2Y_2^\T B_1^\T W_1\right).
\end{align*}
This implies that 
\[B_1Y_2Y_2^\T B_1^\T W_1 - B_1Y_2Y_2^\T B_2^\T W_2\lc B_2Y_2Y_2^\T B_2^\T W_2\rc^\dagger  B_2Y_2Y_2^\T B_1^\T W_1=0. \]
Therefore,
\begin{align*}
    M/M_{22}& = M_{11}-M_{12}M_{22}^\dagger  M_{21}\\
    &=X+B_1Y_2Y_2^\T B_1^\T W_1 - B_1Y_2Y_2^\T B_2^\T W_2\lc B_2Y_2Y_2^\T B_2^\T W_2\rc^\dagger  B_2Y_2Y_2^\T B_1^\T W_1=X.
\end{align*}

Now, suppose $Y$ is not orthonormal, then consider the QR factorization of $Y$: $Y=QR$ where $Q$ is an $m\times m$ orthonormal matrix and $R$ is an $m\times m$ non-sigular upper-triangular matrix. Suppose $Y_1$ has size $m\times \ell$. Write $Q$ and $R$ as block matrices as follows:
\[Q=\begin{pmatrix}
Q_1 & Q_2\end{pmatrix}\text{ and }R = \begin{pmatrix}
R_{11} & R_{12}\\
R_{21} & R_{22}
\end{pmatrix},\]
where $Q_1\in\R^{m\times \ell}$ and $R_{11}\in\R^{\ell\times \ell}$. Then, both $R_{11}$ and $R_{22}$ are non-singular and $R_{21}$ is a zero matrix. Then, by $Y=QR$ we have that $Y_1=Q_1R_{11}$ and $Y_2=Q_1R_{12}+Q_2R_{22}$. This implies that $B_2Q_1=B_2Y_1R_{11}^{-1}=0$ { and thus  }$B_2Q_2=B_2Y_2R_{22}^{-1}$ { has full column rank.}
Moreover,
\begin{align*}
    X&= B_1Y_1\lc Y_1^\mathrm{T}Y_1\rc^{-1}(B_1Y_1)^\mathrm{T}W_1=B_1Q_1R_{11}\lc \lc Q_1R_{11}\rc^\mathrm{T}Q_1R_{11}\rc^{-1}(B_1Q_1R_{11})^\mathrm{T}W_1\\
    &=B_1Q_1R_{11}\lc  R_{11}^\mathrm{T}Q_1^\T Q_1R_{11}\rc^{-1}R_{11}^\T(B_1Q_1)^\mathrm{T}W_1\\
    &=B_1Q_1R_{11}R_{11}^{-1}\lc  Q_1^\T Q_1\rc^{-1}\lc R_{11}^\mathrm{T}\rc^{-1} R_{11}^\T(B_1Q_1)^\mathrm{T}W_1\\
    &=B_1Q_1\lc Q_1^\mathrm{T}Q_1\rc^{-1}(B_1Q_1)^\mathrm{T}W_1
\end{align*}
Then, to prove that $X=M/M_{22}$, we can first reduce to the case when $Y$ is orthonormal and this concludes the proof.
\end{proof}
%%%%%%%%%%%%%%%%

\begin{proof}[Proof of \Cref{prop:cheeger constant ineq}]
We only prove that $h^{K,L_1}\leq h^{K,L_2}$ and the inequality $h^{K}=h^{K,K}\leq h^{K,L_1}$ follows directly by replacing $K$ with $L_1$ and $L_1$ with $L_2$.

The following argument is adapted from the proof of \Cref{thm:monotonicity up eigen}. Since $C_{1}^{L_1,K}\subseteq C_{1}^{L_2,K}$, we consider an orthogonal decomposition $C_{1}^{L_2,K}=C_{1}^{L_1,K}\bigoplus \left(C_{1}^{L_1,K}\right)^\perp$. Then, we have the decomposition $\partial_{1}^{L_2,K}=\partial_{1}^{L_1,K}\oplus \partial^\perp$, where $\partial^\perp$ maps $\left(C_{1}^{L_1,K}\right)^\perp$ into $C_0^{K}$.
Therefore, we have that
\begin{align*}
    \Delta_{0,\mathrm{up}}^{K,L_2}={\partial_{1}^{L_2,K} \left(\partial_{1}^{L_2,K}\right)^*}={\partial_{1}^{L_1,K} \left(\partial_{1}^{L_1,K}\right)^*}+\partial^\perp \left(\partial^\perp\right)^*=\Delta_{0,\mathrm{up}}^{K,L_1}+\partial^\perp \left(\partial^\perp\right)^*.
\end{align*}
In this way, $\Delta_{0,\mathrm{up}}^{K,L_1}\leq \Delta_{0,\mathrm{up}}^{K,L_2}$ in the sense that for each $c\in C_0^K$,
\[\left\langle \Delta_{0,\mathrm{up}}^{K,L_1}c,c\right\rangle_{w_0^K}\leq \left\langle \Delta_{0,\mathrm{up}}^{K,L_2}c,c\right\rangle_{w_0^K}.\]

Therefore, for any nonempty $A\subsetneq V_K$, we have that 
\begin{align*}
    \mathfrak{C}^{L_1}_{A,V^K\backslash A} &= \lc\chi_A^K\rc^\T\lap_0^{K,L_1}\chi_A^K= \lc\chi_A^K\rc^\T\lap_{0,\mathrm{up}}^{K,L_1}\chi_A^K\\
    &\leq\lc\chi_A^K\rc^\T\lap_{0,\mathrm{up}}^{K,L_2}\chi_A^K= \lc\chi_A^K\rc^\T\lap_0^{K,L_2}\chi_A^K=\mathfrak{C}^{L_2}_{A,V^K\backslash A}.
\end{align*}
Hence, we conclude that $h^{K,L_1}\leq h^{K,L_2}$.
\end{proof}

%%%%%%%%%%%%%%%%%%%%%%%%%%%%%

\section{Computation of matrix representations of up and down Laplacians}\label{sec:computation of up and down}

When $K=(V^K,E^K,w^K)$ is a weighted graph such that $w^K_0\equiv 1$, we have that
\begin{equation}\label{eq:weighted graph laplacian}
    \lap_0^{K}={B_{1}^{K}W_{1}^K \left(B_{1}^{K}\right)^\T}=D_{0}^K-A_{0}^K,
\end{equation}
where $D_0^K$ is the diagonal degree matrix and $A_0^K$ is the adjacency matrix; more specifically, for any $i,j\in [n_0^K]$,
\begin{enumerate}
\item $D_{0}^K(i,i)=\sum_{\{v_j:\,\{v_i,v_j\}\in E^K\}}w_1^K(\{v_i,v_j\})$;
    \item $A_{0}^K(i,j)=\delta_{\{v_i,v_j\}\in E^K}\cdot w^K_1(\{v_i,v_j\})$.
\end{enumerate}
An analogous formula also holds for higher dimensional up and down Laplacians, which we review next.

Let $K$ be a simplicial complex with a weight function $w^K$ and fix $q\in\mathbb{N}$. All simplices are assumed to be arbitrarily oriented. Let the sets of oriented simplices $\Bar{S}_{q-1}^K=\{[\rho_i]\}_{i=1}^{n_{q-1}^K}$, $\Bar{S}_q^K=\{[\sigma_i]\}_{i=1}^{n_q^K}$ and $\Bar{S}_{q+1}^K=\{[\tau_i]\}_{i=1}^{n_{q+1}^K}$ be bases of $C_{q-1}^K,C_q^K$ and $C_{q+1}^K$, respectively. 

We define two $n_q^K\times n_q^K$ diagonal degree matrices $D_{q,\mathrm{up}}^K$ and $D_{q,\mathrm{down}}^K$ as follows: for any $i\in[n_q^K]$ 
\[D_{q,\mathrm{up}}^K(i,i)\coloneqq\sum_{\substack{{\tau_j\in S_{q+1}^K}\\{\sigma_i\text{ is a face of }\tau_j}}}\frac{w_{q+1}^K(\tau_j)}{w_q^K(\sigma_i)},\]
\[D_{q,\mathrm{down}}^K(i,i)\coloneqq\sum_{\substack{{\rho_j\in S_{q-1}^K}\\{\rho_j\text{ is a face of }\sigma_i}}}\frac{w_q^K(\sigma_i)}{w_{q-1}^K(\rho_j)}.\]
We further define two $n_q^K\times n_q^K$ adjacency matrices $A_{q,\mathrm{up}}^K$ and $A_{q,\mathrm{down}}^K$ as follows: for any $i,j\in[n_q^K]$ 
\[ A_{q,\mathrm{up}}^K(i,j)\coloneqq-\frac{w_{q+1}^K(\sigma_i\cup\sigma_j)}{w_q^K(\sigma_j)}\cdot[\sigma_i\cup\sigma_j:\sigma_i]\cdot[\sigma_i\cup\sigma_j:\sigma_j],\]
\[ A_{q,\mathrm{down}}^K(i,j)\coloneqq-\frac{w_q^K(\sigma_i)}{w_{q-1}^K(\sigma_i\cap\sigma_j)}\cdot[\sigma_i:\sigma_i\cap\sigma_j]\cdot[\sigma_j:\sigma_i\cap\sigma_j].\]
Here $[\sigma_i\cup\sigma_j:\sigma_i]$ denotes the sign of $[\sigma_i]$ in $\partial_{q+1}^K ([\sigma_i\cup\sigma_j])$ if $\sigma_i\cup\sigma_j\in S_{q+1}^K$ and is 0 otherwise. 
Similarly, $[\sigma_i:\sigma_i\cap\sigma_j]$ denotes the sign of $[\sigma_i\cap\sigma_j]$ in $\partial_{q}^K [\sigma_i]$ if $\sigma_i\cap\sigma_j\in S_{q-1}^K$ and is 0 otherwise.

Then, it is not hard to see that
\[\lap_{q,\mathrm{up}}^K = D_{q,\mathrm{up}}^K-A_{q,\mathrm{up}}^K\text{ and  }\lap_{q,\mathrm{down}}^K = D_{q,\mathrm{down}}^K-A_{q,\mathrm{down}}^K.\]
See \cite[Section 3.3]{goldberg2002combinatorial} or \cite{horak2013spectra} for more details.

\paragraph{{Computation of $\lap_{q,\mathrm{up}}^K$ and $\lap_{q,\mathrm{down}}^K$} and complexity analysis} 
{Given the boundary matrices $B_{q+1}^K$ and $B_q^K$, and the weight matrices $W_{q+1}^K,W_q^K$ and $W_{q-1}^K$, we describe how we construct the degree matrices $D_{q,\mathrm{up}}^K,D_{q,\mathrm{down}}^K$ and the adjacency matrices $A_{q,\mathrm{up}}^K,A_{q,\mathrm{down}}^K$, and give the time complexity of our constructions. }
\begin{enumerate}
    \item $D_{q,\mathrm{up}}^K$: We start with a $n_q^K\times n_q^K$ zero matrix $D_{q,\mathrm{up}}^K$.
    Next, we scan over each $\tau_j\in S_{q+1}^K$ and update $D_{q,\mathrm{up}}^K$ by adding $\frac{W_{q+1}^K(j,j)}{W_q^K(i,i)}$ to $D_{q,\mathrm{up}}^K(i,i)$ if $\sigma_i$ is a face of $\tau_j$ {(i.e., $B_{q+1}^K(i,j)\neq 0$)}. Since each $\tau_j$ has $q+2$ faces, it takes $O\lc(q+2)n_{q+1}^K\rc = O\lc n_{q+1}^K\rc$ total time to construct $D_{q,\mathrm{up}}^K$. 
    \item $D_{q,\mathrm{down}}^K$: Again we start with a $n_q^K\times n_q^K$ zero matrix $D_{q,\mathrm{down}}^K$.
    Next, we scan over each $\sigma_j\in S_{q}^K$ and update $D_{q,\mathrm{down}}^K$ by adding $\frac{W_{q}^K(j,j)}{W_{q-1}^K(i,i)}$ to $D_{q,\mathrm{down}}^K(i,i)$ if $\rho_i$ is a face of $\sigma_j$ {(i.e., $B_{q}^K(i,j)\neq 0$)}. Since each $\sigma_j$ has $q+1$ faces, it takes $O\lc(q+1)n_{q}^K\rc = O\lc n_{q}^K\rc$ total time to construct $D_{q,\mathrm{down}}^K$.
    \item $A_{q,\mathrm{up}}^K$: First, note that any two $q$-simplices $\sigma_i$ and $\sigma_j$ can only both be faces of at most one ($q+1$)-simplex. Now for each ($q+1$)-simplex $\tau_\ell\in S_{q+1}^K$, we need to enumerate any two co-dimension 1 faces $\sigma_i$ and $\sigma_j$ of $\tau_\ell$, and fill in the entry $A_{q,\mathrm{up}}^K(i,j)$ by $-\frac{W_{q+1}^K(\ell,\ell)}{W_q^K(j,j)}B_{q+1}^K(i,\ell)B_{q+1}^K(j,\ell)$. This takes $O\lc\begin{pmatrix}q+2\\2
    \end{pmatrix}\cdot n_{q+1}^K\rc = O\lc n_{q+1}^K\rc$ total time. 
    \item $A_{q,\mathrm{down}}^K$: For any $\sigma_i,\sigma_j\in S_q^K$, there exists at most one $(q-1)$-simplex $\rho_\ell\in S_{q-1}^K$ as the common face of $\sigma_i$ and $\sigma_j$. Then, for each pair $\sigma_i,\sigma_j\in S_q^K$, if they have no common face then $A_{q,\mathrm{down}}^K(i,j)=0$ and if they have a common face $\rho_\ell\in S_{q-1}^K$, then fill in the entry $A_{q,\mathrm{down}}^K(i,j)$ by $-\frac{W_{q}^K(i,i)}{W_{q-1}^K(\ell,\ell)}B_{q}^K(\ell,i)B_{q}^K(\ell,j)$. Since we have $O\lc\lc n_q^K\rc^2\rc$ many pairs of $\sigma_i,\sigma_j\in S_q^K$, the time complexity for obtaining $A_{q,\mathrm{down}}^K$ is $O\lc\lc n_q^K\rc^2\rc$.
    %%%%%%%%%%%%%%%%%%%
    
\end{enumerate}
This implies that computing $\lap_{q,\mathrm{up}}^K$ takes time $O\left(n_{q+1}^K\right)$ and computing $\lap_{q,\mathrm{down}}^K$ takes time $O\left(\lc n_q^K\rc^2\right)$. 

%%%%%%%%%%%%%%%%%%%%%%%%%%%%%%%%%%%%%%%%%%%
\section{A remark for the matrix representation of $\lap_{q,\mathrm{up}}^{K,L}$ in \cite{wang2020persistent}}\label{sec:wrong algorithm example}
When dealing with unweighted simplicial complexes, i.e., $w^L\equiv1$, it is suggested in \cite{wang2020persistent} that $\lap_{q,\mathrm{up}}^{K,L}$ can be computed by (i) considering a certain submatrix of the boundary operator and then (ii) multiplying it by its transpose. However, as we show in \Cref{thm:weighted-basis-persistent-Laplcacian} and \Cref{lm:image in subspace}, finding the matrix representations of both the new boundary operator and its dual is much more involved {than what is suggested}: the boundary matrix $B_{q+1}^L$ has to be reduced, and the matrix representation of the dual operator $\lc\partial_{q+1}^{L,K}\rc^*$ is not simply the transpose $\lc B_{q+1}^{L,K}\rc^\T$ but instead has the form $\lc Z^\T Z\rc^{-1}\lc B_{q+1}^{L,K}\rc^\T$ (cf. \cref{lm:image in subspace}).
The following example illustrates that simply considering a certain submatrix of the boundary matrix (in a way suggested in \cite{wang2020persistent}) and then multiplying it by its transpose \emph{does not} produce the correct up persistent Laplacian, in the sense that persistent Betti number cannot be recovered.

\begin{example}\label{ex:wrong algorithm}
Consider the graph $L$ shown in \Cref{fig:4point} with vertices labeled as in the figure. Let $K$ be the subgraph with vertex set $V^K=\{1,2\}$. Choose orientations and an order of edges as follows: $\bar{S}_1^L=\{[1,3],[3,4],[4,2]\}$. Then,
$B_1^L=\begin{pmatrix}-1 &0 & 0\\0&0&1\\1&-1&0&\\0&1&-1\end{pmatrix}.$ It is suggested in \cite{wang2020persistent} to use the following matrix $B=B_1^L\lc\{1,2\},:\rc=\begin{pmatrix}-1 &0 & 0\\0&0&1\end{pmatrix}$ as the matrix representation of $\partial_1^{L,K}$. Then, $BB^\T=\begin{pmatrix}1 &0 \\0&1\end{pmatrix}=I_2$. Note that $\mathrm{nullity}\lc I_2\rc=0$. However, it is obvious that $\beta_0^{K,L}=1$. Then, $\beta_0^{K,L}\neq \mathrm{nullity}\lc BB^\T\rc$ and thus $BB^\T$ cannot be the correct matrix representation of the persistent Laplacian $\Delta_0^{K,L}$.
\end{example}
\begin{figure}
    \centering
    \includegraphics[width=0.25\linewidth]{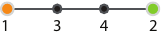}
    \caption{\textbf{Illustration of \cref{ex:wrong algorithm}.} A four-vertex graph.}
    \label{fig:4point}
\end{figure}

%%%%%%%%%%%%%%%%%%%%%%%%%%%%%%%%%%%%%
\section{Effective resistances between disjoint sets}\label{sec:effective resistance between sets}

In this section, we study some properties of effective resistances between disjoint sets.

\subsection{Energy formulation}\label{sec: energy formulation}

Let $K=(V^K,E^K,w^K)$ be a connected weighted graph. Let $f:V^K\rightarrow\mathbb{R}$ be any real function. We define its energy to be
\[\mathcal{E}^K(f)\coloneqq\frac{1}{2}\sum_{{v,w\in V^K;v\sim w}}w^K(\{v,w\})(f(v)-f(w))^2,\]
where $v\sim w$ denotes the condition that $\{v, w\}\in E^K$.

We prove that the effective resistance between disjoints sets of vertices is related to the energy functional as follows:
\begin{theorem}\label{thm:alternative definition of resist}
Let $A,B\subseteq V^K$ be nonempty disjoint subsets. Then,
\begin{equation}\label{eq:alternative resist}
    \mathfrak{R}^K_{A,B}=\lc\inf\left\{\mathcal{E}^K(f):\,f|_A\equiv1\text{ and }f|_B\equiv 0\right\}\rc^{-1}.
\end{equation}
\end{theorem}

In \cite{lyons2017probability,lyons2020induced}, the right hand side of \Cref{eq:alternative resist} is taken as the definition of effective resistance between sets. Therefore, the theorem illustrates that the definition of effective resistances we adopt in this paper (namely that of \cite{song2019extension})  is equivalent to the one from \cite{lyons2017probability,lyons2020induced}. 

In fact, \Cref{eq:alternative resist} is a proper generalization of the following property of effective resistances between vertices:
\begin{lemma}[{\cite[Theorem 4.2]{jorgensen2008operator}}]\label{lm:vertex energy}
For any distinct $v,w\in V^K$ we have that
\[\mathfrak{R}^K_{v,w}=\lc\inf\left\{\mathcal{E}^K(f):\,f(v)=1\text{ and }f(w)= 0\right\}\rc^{-1}.\]
\end{lemma}

It is suggested in \cite{song2019extension} that $\mathfrak{R}_{A,B}^K$ is the same as the effective resistance between two vertices in a reduced graph $\hat{K}$ by collapsing vertices both in $A$ and in $B$. More precisely, $\hat{K}$ is defined as follows: 
\begin{enumerate}
    \item $V^{\hat{K}}\coloneqq \{a,b\}\cup V^K\backslash (A\cup B)$ where $a,b$ are two extra vertices not belonging to $V^K$.
    \item For any $v,w\in V^{\hat{K}}\backslash\{a,b\}$, $\{v,w\}\in E^{\hat{K}}$ if and only if $\{v,w\}\in E^K$; for any $w\in V^{\hat{K}}\backslash\{a,b\}$, $\{a,w\}\in E^{\hat{K}}$ if there exists $v\in A$ such that $\{v,w\}\in E^K$; for any $v\in V^{\hat{K}}\backslash\{a,b\}$, $\{v,b\}\in E^{\hat{K}}$ if there exists $w\in B$ such that $\{v,w\}\in E^K$; $\{a,b\}\in E^{\hat{K}}$ if there exists $v\in A$ and $w\in B$ such that $\{v,w\}\in E^K$.
    \item For any $\{v,w\}\in E^{\hat{K}}$, if $v,w\in V^{\hat{K}}\backslash\{a,b\}$, then $w^{\hat{K}}(\{v,w\})\coloneqq w^{{K}}(\{v,w\})$; if $v=a$ and $w\in V^{\hat{K}}\backslash\{a,b\}$, then
    \[w^{\hat{K}}(\{v,w\})\coloneqq\sum_{v'\in A;v'\sim w}w^{{K}}(\{v',w\});\] if $v\in V^{\hat{K}}\backslash\{a,b\}$ and $w=b$, then
    \[w^{\hat{K}}(\{v,w\})\coloneqq\sum_{w'\in B;v\sim w'}w^{{K}}(\{v,w'\});\] if $v=a$ and $w=b$, then \[w^{\hat{K}}(\{v,w\})\coloneqq\sum_{v'\in A,w'\in B;v'\sim w'}w^{{K}}(\{v',w'\}).\]
\end{enumerate}

Then, it follows directly from \cite[Equation (10)]{song2019extension} that
\begin{lemma}\label{lm:AB to ab}
$\mathfrak{R}_{A,B}^K=\mathfrak{R}_{a,b}^{\hat{K}}$.
\end{lemma}

Note that the statement of the lemma emulates the ``physical" experiment depicted in Figure \ref{fig:res-sets}, namely that all vertices in each set are connected together by a perfect conductor (cable) and then one measures the ratio between voltage and current to obtain the value of the effective resistance between the two sets; see  \cite[Theorem 1]{song2019extension} for more details.

Now, we are ready to prove \Cref{thm:alternative definition of resist}.

\begin{proof}[Proof of \Cref{thm:alternative definition of resist}]
For any $f:V^K\rightarrow\mathbb{R}$ such that $f|_A\equiv 1$ and $f|_B\equiv 0$, we have that
\begin{align*}
    \mathcal{E}^K(f)=&\frac{1}{2}\sum_{v,w\in V^K;  v\sim w}w^K(\{v,w\})(f(v)-f(w))^2\\
    =&\frac{1}{2}\sum_{v,w\in V^K\backslash(A\cup B);  v\sim w}w^K(\{v,w\})(f(v)-f(w))^2 \\
    + &\sum_{v\in A,w\in V^K\backslash(A\cup B);  v\sim w}w^K(\{v,w\})(1-f(w))^2\\
    + &\sum_{v\in V^K\backslash(A\cup B),w\in B;  v\sim w}w^K(\{v,w\})(f(v)-0)^2\\
    + &\sum_{v\in A,w\in B;  v\sim w}w^K(\{v,w\})(1-0)^2
    \end{align*}

Now, we define a new function $\hat{f}:V^{\hat{K}}:\rightarrow\mathbb{R}$ as follows: for any $v\in V^{\hat{K}}\backslash\{a,b\}$, $\hat{f}(v)\coloneqq f(v)$, $\hat{f}(a)\coloneqq1$ and $\hat{f}(b)\coloneqq0$. Then, the following equalities are direct consequences of the definition of $\hat{K}$:
\[\sum_{v,w\in V^K\backslash(A\cup B);  v\sim w}w^K(\{v,w\})(f(v)-f(w))^2=\sum_{v,w\in V^{\hat{K}}\backslash\{a,b\};  v\sim w}w^{\hat{K}}(\{v,w\})(\hat{f}(v)-\hat{f}(w))^2;\]
\[\sum_{v\in A,w\in V^K\backslash(A\cup B);  v\sim w}w^K(\{v,w\})(1-f(w))^2=\sum_{v=a,w\in V^{\hat{K}}\backslash\{a,b\};  v\sim w}w^{\hat{K}}(\{a,w\})(1-\hat{f}(w))^2;\]
\[\sum_{v\in V^K\backslash(A\cup B),w\in B;  v\sim w}w^K(\{v,w\})(f(v)-0)^2=\sum_{v\in V^{\hat{K}}\backslash\{a,b\},w=b;  v\sim w}w^{\hat{K}}(\{v,b\})(\hat{f}(v)-0)^2;\]
\[\sum_{v\in A,w\in B;  v\sim w}w^K(\{v,w\})(1-0)^2=w^{\hat{K}}(\{a,b\})(1-0)^2.\]

Therefore,
\begin{align*}
    \mathcal{E}^K(f)=&\frac{1}{2}\sum_{v,w\in V^{\hat{K}}\backslash\{a,b\};  v\sim w}w^{\hat{K}}(\{v,w\})(\hat{f}(v)-\hat{f}(w))^2 \\
    + &\sum_{v=a,w\in V^{\hat{K}}\backslash\{a,b\};  v\sim w}w^{\hat{K}}(\{a,w\})(1-\hat{f}(w))^2\\
    + &\sum_{v\in V^{\hat{K}}\backslash\{a,b\},w=b;  v\sim w}w^{\hat{K}}(\{v,b\})(\hat{f}(v)-0)^2\\
    + &w^{\hat{K}}(\{a,b\})(1-0)^2\\
    =&\frac{1}{2}\sum_{v,w\in V^{\hat{K}};  v\sim w}w^{\hat{K}}(\{v,w\})(\hat{f}(v)-\hat{f}(w))^2\\
    =&\mathcal{E}^{\hat{K}}(\hat{f}).
\end{align*}

Similarly, for any $\hat{f}:V^{\hat{K}}:\rightarrow\mathbb{R}$ such that $\hat{f}(a)=1$ and $\hat{f}(b)=0$, the function $f:V^K\rightarrow\mathbb{R}$ defined by $f|_A\coloneqq 1$ and $f|_B\coloneqq 1$ and $f(v)\coloneqq \hat{f}(v)$ for any $v\in V^{{K}}\backslash(A\cup B)$ satisfies
\[\mathcal{E}^{\hat{K}}(\hat{f})=\mathcal{E}^K(f).\]

Therefore, by \Cref{lm:vertex energy} and \Cref{lm:AB to ab} we have that
\begin{align*}
    \mathfrak{R}^K_{A,B}&=\mathfrak{R}^{\hat{K}}_{a,b}=\lc\inf\left\{\mathcal{E}^{\hat{K}}(\hat{f}):\,\hat{f}(a)=1\text{ and }\hat{f}(w)= 0\right\}\rc^{-1}\\
    &=\lc\inf\left\{\mathcal{E}^K(f):\,f|_A\equiv1\text{ and }f|_B\equiv 0\right\}\rc^{-1}.
\end{align*}

\end{proof}

\subsection{Relation with random walks}\label{sec:relation random walk}
Let $K=(V^K,E^K,w^K)$ be a connected weighted graph. Consider a Markov chain $X_0,X_1,\ldots$ on $K$ with $V^K$ being the set of states and with $p\coloneqq \lc D_0^K\rc^{-1}A_0^K$ being the transition matrix, where $D_0^K$ is the degree matrix and $A_0^K$ is the adjacency matrix (cf. \Cref{sec:computation of up and down}). More explicitly, for $v,w\in V^K$, the transition probability is given by
\[p(v,w)\coloneqq\frac{w^K(\{v,w\})}{\deg^K(v)}.\]

It is not hard to prove that this Markov chain is irreducible and reversible, and has a unique stationary distribution 
\[\pi:=\sum_{v\in V^K}\frac{\deg^K(v)}{\sum_{w\in V^K}\deg^K(w)}\delta_v.\]

Given an initial distribution $\mu$ of $X_0$, we denote by $P_\mu$ the law of the Markov chain $\{X_n\}_{n=0}^\infty$. When $\mu=\delta_v$ is the Dirac delta measure at $v\in V^K$, we let $P_v\coloneqq P_{\delta_v}$.

\paragraph{Harmonic functions} A real function $f:V^K\rightarrow \mathbb{R}$ is said to be \textit{harmonic} at a vertex $v\in V^K$ if 
\[f(v)=\sum_{w\sim v} p(v,w)f(w)=\sum_{w\sim v} \frac{w^K(\{v,w\})}{\deg^K(v)}f(w).\]
Given $S\subseteq V^K$, we say $f$ is harmonic on $S$ if $f$ is harmonic at each vertex in $S$. The following two lemmas are easily adapted from \cite{doyle1984random}.

\begin{lemma}[Maximum principle]\label{lm:max prin}
Let $S\subsetneq V^K$ and let $f:V^K\rightarrow \mathbb R$. Suppose that $f$ is harmonic on $S$. Then, $f$ reaches its maximum and minimum on $V^K\backslash S$. 
\end{lemma}
\begin{proof}
We only prove the case for the maximum value, and the minimum case follows the same argument.
Let $M\coloneqq\max_{v\in V^K}f(v)$. Let $ V_M\coloneqq\{v\in V^K:\,f(v)=M\}$. If $V_M\cap S=\emptyset$, then we are done. Hence we assume that $V_M\cap S\neq\emptyset$. Note that for any $v\in V_M\cap S$ and any $w\in V^K$, if $\{v,w\}\in E^K$, then it is easy to see that $w\in V_M$ since $f$ is harmonic at $v$. 

Now, for any $w\in V_M\backslash S$, since $K$ is connected, there exists a path $v=v_0,v_1,\ldots,v_k=w$ such that $v\in V_M\cap S$ and for $i=0,\ldots,k-1$, $\{v_i,v_{i+1}\}\in E^K$. Assume that $i\in1,\ldots,k$ is the smallest integer such that $v_i\notin S$. Such $i$ exists since $v_0\in S$ and $v_k\notin S$. Therefore, $v_0,v_1,\ldots,v_{i-1}\in S$ and $v_i\notin S$. Then, by inductively applying the argument in the previous paragraph, we have that $v_1,v_2,\ldots,v_{i-1}\in V_M$ and thus $v_i\in V_M\backslash S\subseteq V^K\backslash S$. This concludes the proof. 
\end{proof}

\begin{lemma}[Uniqueness principle]\label{lm:uniq prin}
If $f$ and $g$ are harmonic on $S$ and are such that $f|_{V^K\backslash S}=g|_{V^K\backslash S}$, then $f\equiv g$.
\end{lemma}
\begin{proof}
Let $h\coloneqq f-g$. Then, $h$ is obviously harmonic on $S$ and $h|_{V^K\backslash S}\equiv 0$. By \Cref{lm:max prin}, $h$ attains its maximum and minimum on $V^K\backslash S$. So $h\equiv 0$ and thus $f\equiv g$.
\end{proof}

\paragraph{Probabilistic interpretation of voltage functions} Given two disjoint subsets $A,B\subseteq V^K$, the \textit{voltage function} (from source $A$ to ground $B$) is the unique function $U:V^K\rightarrow\mathbb{R}$ which is harmonic on $V^K\backslash A\cup B$ and satisfies $U|_A\equiv1$ and $U|_B\equiv0$. Here the uniqueness follows from \Cref{lm:uniq prin}. 

When $A=\{a\}$ and $B=\{b\}$ are singletons, there exists a well-known probabilistic interpretation of the corresponding voltage function $U$. To illustrate this, we first introduce some notation. For any subspace $S\in V^K$, let $T_S^0\coloneqq\min\{n\geq 0:\,X_n\in S\}$ be the first time when the Markov chain visits $S$. Then,
\begin{proposition}[{\cite[Section 1.3.2]{doyle1984random}}]\label{prop:voltage interpretation}
For any $v\in V^K$, one has that
\[U(v) = P_v\lc T_a^0<T_b^0\rc.\]
\end{proposition}

The following generalization of Proposition \ref{prop:voltage interpretation} has been mentioned in passing in \cite[Section 3.1]{zhu2003semi}. We provide a proof for completeness.
\begin{theorem}\label{thm:prob potential}
For any $v\in V^K$, one has that
\[U(v) = P_v\lc T_A^0<T_B^0\rc.\]
\end{theorem}
\begin{proof}
Let $S\coloneqq V^K\backslash A\cup B$. Then, we have that $U$ is a harmonic function on $S$. Now, for any $v\in S$, we have that
\[P_v\lc T_A^0<T_B^0\rc = \sum_{w\sim v}p(v,w)P_w\lc T_A^0<T_B^0\rc.\]
If we define $h:V^K\rightarrow\mathbb{R}$ by $v\mapsto P_v\lc T_A^0<T_B^0\rc$, then $h$ is clearly a harmonic function on $S$ such that $h|_A\equiv 1$ and $h|_B\equiv 0$. By \Cref{lm:uniq prin}, we have that $h\equiv U$, which concludes the proof.
\end{proof}

\paragraph{Effective resistance and escape probability} For any subspace $S\in V^K$, let $T_S^1\coloneqq\min\{n\geq 1:\,X_n\in S\}$. It is obvious that for any $v\in V^K$ and any subsets $A,B\subseteq V^K$ we have 
\[P_v\lc T_A^1<T_B^1\rc=\sum_{w\sim v}p(v,w)P_w\lc T_A^0<T_B^0\rc.\]
Given $a,b\in V^K$, we call $P_a\lc T_b^1<T_a^1\rc$ the \textit{escape probability} from $a$ to $b$, i.e., the probability of the random walk, starting at $a$, reaches $b$ before returning to $a$. The escape probability is closely related with effective conductance:
\begin{proposition}[{\cite[Section 1.3.4]{doyle1984random}}]
$P_a\lc T_b^1<T_a^1\rc=\frac{\mathfrak{C}^K_{a,b}}{\deg^K(a)}$.
\end{proposition}

Now, we generalize this result to the case of two disjoint sets $A$ and $B$. Recall that $\pi$ denotes the stationary distribution. We call $P_\pi\lc X_0\in A,T_B^1<T_A^1 \rc$ \textit{the escape probability from $A$ to $B$}.\footnote{Here $P_\pi(\mathcal{A},\mathcal{B})$ denotes the probability of the intersection  $\mathcal{A}\cap \mathcal{B}$ of the two events $\mathcal{A}$ and $\mathcal{B}$.} Then, we have the following result:

\begin{theorem}\label{thm:escape set}
$P_\pi\lc X_0\in A,T_B^1<T_A^1 \rc=\frac{\mathfrak{C}^K_{A,B}}{\sum_{a\in A}\deg^K(a)}$.
\end{theorem}
\begin{proof}
Let $U$ be the voltage function on $K$ such that $U|_A\equiv 1$ and $U|_B\equiv 0$.  Assume that $V^K = \{v_1,\ldots,v_n\}$. Then, we overload the notation $U$ to also denote the vector $ \lc U(v_1),U(v_2),\ldots, U(v_n)\rc^\T$.
Let
\begin{equation}\label{eq:ohm law}
    J\coloneqq\lap_0^K U.
\end{equation}
For each vertex $v_i\in V^K$, $J(v_i)$ is actually the (influx of) electric current  at $v_i$ under the voltage function $U$.
For any given $a\in A$, it follows directly from \Cref{eq:ohm law} that
\begin{align*}
    J(a) &= \sum_{v\sim a}(U(a)-U(v))w^K(\{a,v\})\\
    &=\sum_{v\sim a}(U(a)-U(v))\frac{w^K(\{a,v\})}{\deg^K(a)}\deg^K(a)\\
    &=\lc 1 - \sum_{v\sim a}p(a,v)U(v)\rc\deg^K(a).
\end{align*}
By \Cref{thm:prob potential}, we have that 
\[\sum_{v\sim a}p(a,v)U(v)=\sum_{v\sim a}p(a,v)P_v\lc T_A^0<T_B^0\rc=P_a\lc T_A^1<T_B^1\rc.\]
Therefore, 
\[J(a)=\lc 1-P_a\lc T_A^1<T_B^1\rc\rc\deg^K(a)=P_a\lc T_A^1>T_B^1\rc\deg^K(a).\]
By \cite[Theorem 1]{song2019extension}, we have that $\sum_{a\in A}J(a)=\mathfrak{C}^K_{A,B}$. This implies that 
\begin{align*}
    \mathfrak{C}^K_{A,B}&=\sum_{a\in A}J(a)=\sum_{a\in A}P_a\lc T_A^1>T_B^1\rc\deg^K(a)\\
    &=\sum_{a\in A}\deg^K(a)\cdot \sum_{a'\in A}P_{a'}\lc T_A^1>T_B^1\rc\frac{\deg^K(a')}{\sum_{a\in A}\deg^K(a)}\\
    &=\sum_{a\in A}\deg^K(a)\cdot \sum_{a'\in A}P_{a'}\lc T_A^1>T_B^1\rc\pi(a')\\
    &=\sum_{a\in A}\deg^K(a)\cdot P_\pi\lc X_0\in A,T_A^1>T_B^1 \rc.
\end{align*}
Hence, $P_\pi\lc X_0\in A,T_B^1<T_A^1 \rc=\frac{\mathfrak{C}^K_{A,B}}{\sum_{a\in A}\deg^K(a)}$.

\end{proof}

%%%%%%%%%%%%%%%%%%%%%
\section{Discussion about the persistent Cheeger constant}\label{sec:alt-pcc}

Consider the graph shown in \Cref{fig:ngraph} which we call $L_n$ for $n\in\mathbb{N}$. Let $K$ be always the graph consisting of vertices $a$ and $b$ regardless of $n$. Then, it is obvious that 
\[h^{K,L_n}=\mathfrak{C}_{a,b}^{L_n}.\]

\begin{figure}
    \centering
    \includegraphics[width=0.5\linewidth]{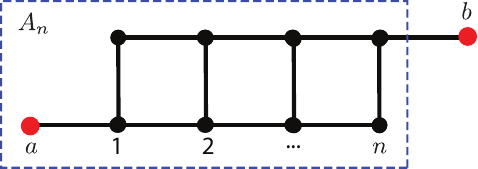}
    \caption{\textbf{Graph $L_n$.} $L_n$ is an unweighted graph with $2n+2$ vertices. $A_n$ is the complement of $b$ in the vertex set $V^{L_n}$.}
    \label{fig:ngraph}
\end{figure}

Let $A_n$ denote the vertex set $V^{L_n}\backslash\{b\}$. Then, by \Cref{thm:alternative definition of resist} and \Cref{lm:vertex energy}, we have that
\begin{align*}
\mathfrak{C}^{L_n}_{a,b}&=\inf\left\{\mathcal{E}^K(f):\,f(a)=1\text{ and }f(b)= 0\right\}\\
&\leq \inf\left\{\mathcal{E}^K(f):\,f|_{A_n}\equiv1\text{ and }f(b)= 0\right\}=
    \mathfrak{C}^{L_n}_{A_n,b}.
\end{align*}
On the other hand, using notation in \Cref{sec:effective resistance between sets}, by \Cref{lm:AB to ab} we obtain
\[\mathfrak{C}^{L_n}_{A_n,b}=\mathfrak{C}^{\hat{L}_n}_{a_n,b},\]
where $\hat{L}_n$ is the two-vertex graph with vertex set $\{a_n,b\}$ and with one unit weight edge connecting them. Therefore, for each $n\in\mathbb{N}$
\[\mathfrak{C}^{L_n}_{A_n,b}=\mathfrak{C}^{\hat{L}_n}_{a_n,b}=1.\]
Hence, for all $n\in\mathbb{N}$, $h^{K,L_n}=\mathfrak{C}_{a,b}^{L_n}$ is upper bounded by 1. However, it is obvious that $|P_{L_n}(a,b)|$ (and thus $h^{K,L_n}_\mathrm{path}$)  blows up to infinity as $n\rightarrow\infty$. As $\lambda_{0,2}^{K,L_n}\leq 2 h^{K,L_n}\leq 2$ by \Cref{thm:persistent Cheeger inequality}, $h^{K,L_n}_\mathrm{path}$ cannot be upper bounded by $\lambda_{0,2}^{K,L_n}$ for all $n\in\mathbb{N}$.

%%%%%%%%%%%%%%%%%%%%%%%%%%%%%%%%%%%%
\section{Effective resistances for simplicial networks and Kron reduction}\label{sec:kron reduction}

For any positive $q_0\in\mathbb N$, a \emph{$q_0$-dim simplicial network} is a $q_0$-dim simplicial complex $K$ with a weight function $w^K$ such that $w_q^K\equiv 1$ for all $q\neq q_0$. This condition of weight functions follows from  \cite{kook2018simplicial}. In fact, in the one dimensional case, whereas edge weights represent electric conductance, there is no physical interpretation for weights on vertices.

In this section, we define the effective resistances for simplicial networks and study the corresponding properties.

\paragraph*{Effective resistances for simplicial networks} Given a positive $q_0\in\mathbb{N}$, let $K$ be a $q_0$-dim simplicial network. We represent explicitly the vertex set $S_0^K$ of $K$ by an ordered finite set $[n_0^K]=\left\{1,\ldots,n_0^K\right\}$. A $q_0$-dim \emph{{(electric)} current generator} $\sigma$ is a $(q_0+1)$-point subset of $[n_0^K]$ such that $\partial_{q_0}^K\sigma \in\mathrm{im}(\partial_{q_0}^K)$ \cite{kook2018simplicial}.
Here $\sigma$ may not be a simplex in $K$, and $\partial_{q_0}^K\sigma$ denotes the formal boundary of $\sigma$ computed via \Cref{eq:boundary map}.
Note that any $q_0$-simplex $\sigma$ in $K$ is automatically a $q_0$-dim current generator. Let $\partial_\sigma \coloneqq\partial_{q_0}^K\sigma $ and let $D_{\sigma}^K\in \R^{n_{q_0-1}^K}$ denote the vector representation of $\partial_\sigma \in C_{q_0-1}^K$. Then, we define the effective resistance $\mathfrak{R}_\sigma^K$ on a current generator $\sigma$ by 
\begin{equation}\label{eq:effective resistance simplicial}
    \mathfrak{R}_\sigma^K\coloneqq \lc D_{\sigma}^K\rc^\mathrm{T}\lc\lap_{q_0-1,\mathrm{up}}^K\rc^{\dagger}D_{\sigma}^K.
\end{equation}

\begin{remark}[Connection with graph effective resistance]
When $q_0=1$, if $v,w\in S_0^K$ belong to the same connected component of $K$, then $[v,w]$ is a $1$-dim current generator. It is clear that $\mathfrak{R}_{v,w}^K$ defined via \Cref{eq:effective resistance graph} coincides with $\mathfrak{R}_{[v,w]}^K$ as defined via \Cref{eq:effective resistance simplicial}. Therefore, our definition of effective resistances on current generators is a generalization of effective resistances between vertices on graphs.
\end{remark}

\begin{remark}
In \cite{osting2017towards}, a formula similar to \Cref{eq:effective resistance simplicial} has been used to define effective resistances on $q_0$-simplices of a $q_0$-dim simplicial network. Note that our setting is more general in that we define effective resistances on all $q_0$-dim current generators, not just the set of $q_0$-simplices.
\end{remark}

We define the \emph{generalized current-balance equation} for a $q_0$-dim simplicial network $K$ as follows:
\begin{equation}\label{eq:current-balance equation}
    J = \lap_{q_0-1,\mathrm{up}}^K U,
\end{equation}
where $J,U\in \R^{n_{q_0-1}^K}$ are the vector representations of chains in $ C_{q_0-1}^K$ reflecting current influxes and voltage potentials at $(q_0-1)$-simplices, respectively.

\begin{lemma}[Effective resistance and current-balance equation]\label{lm:effective resistance current balance}
Let $\sigma$ be a $q_0$-dim current generator and let $j_\sigma\in \R$. Let $U\coloneqq j_\sigma\lc \lap_{q_0-1,\mathrm{up}}^K\rc^\dagger D_\sigma^K\in\R^{n_{q_0-1}^K}$ representing a chain in $C_{q_0-1}^K$. Then, $j_\sigma D_{\sigma}^K$ and $U$ satisfy the current-balance equation (\Cref{eq:current-balance equation}):
\begin{equation}\label{eq:i=du}
    j_\sigma D_{\sigma}^K=\lap_{q_0-1,\mathrm{up}}^K U.
\end{equation}
Moreover, if $j_\sigma\neq 0$, then we have
\[\mathfrak{R}_\sigma^K= \frac{\lc D_{\sigma}^K\rc^\mathrm{T}U}{j_\sigma}.\]
\end{lemma}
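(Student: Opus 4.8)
The plan is to reduce both assertions to the single fact that the coordinate vector $D_\sigma^K$ lies in the column space of $\lap_{q_0-1,\mathrm{up}}^K$, and then to invoke the elementary identities satisfied by the Moore--Penrose inverse. Recall that for any matrix $A$ the product $AA^\dagger$ is the orthogonal projector onto $\mathrm{im}(A)$, so that $AA^\dagger x = x$ whenever $x\in\mathrm{im}(A)$. Taking $A = \lap_{q_0-1,\mathrm{up}}^K$ and $x = D_\sigma^K$, the definition $U = j_\sigma\lc\lap_{q_0-1,\mathrm{up}}^K\rc^\dagger D_\sigma^K$ gives
\[
\lap_{q_0-1,\mathrm{up}}^K U = j_\sigma\,\lap_{q_0-1,\mathrm{up}}^K\lc\lap_{q_0-1,\mathrm{up}}^K\rc^\dagger D_\sigma^K = j_\sigma D_\sigma^K,
\]
which is exactly \Cref{eq:i=du}, \emph{provided} $D_\sigma^K\in\mathrm{im}\lc\lap_{q_0-1,\mathrm{up}}^K\rc$. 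Thus everything hinges on checking this membership.

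To verify it, I would first identify the image. Since $L$, and hence $K$, is a $q_0$-dim simplicial network, $w_{q_0-1}^K\equiv 1$, i.e. $W_{q_0-1}^K$ is the identity; consequently the matrix formula for the up-Laplacian from \Cref{sec:weighted persistent} reads $\lap_{q_0-1,\mathrm{up}}^K = B_{q_0}^K W_{q_0}^K\lc B_{q_0}^K\rc^\T$, a symmetric positive semi-definite matrix. Because $W_{q_0}^K$ is positive definite, writing $W_{q_0}^K = \lc W_{q_0}^K\rc^{1/2}\lc W_{q_0}^K\rc^{1/2}$ exhibits $\lap_{q_0-1,\mathrm{up}}^K = C C^\T$ with $C = B_{q_0}^K\lc W_{q_0}^K\rc^{1/2}$, whence $\mathrm{im}\lc\lap_{q_0-1,\mathrm{up}}^K\rc = \mathrm{im}(C) = \mathrm{im}\lc B_{q_0}^K\rc = \mathrm{im}\lc\partial_{q_0}^K\rc$. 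Now by the very definition of a $q_0$-dim current generator, $\partial_\sigma = \partial_{q_0}^K\sigma\in\mathrm{im}\lc\partial_{q_0}^K\rc$, so $D_\sigma^K\in\mathrm{im}\lc B_{q_0}^K\rc = \mathrm{im}\lc\lap_{q_0-1,\mathrm{up}}^K\rc$, which completes the proof of \Cref{eq:i=du}.

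For the last assertion I would simply observe that when $j_\sigma\neq 0$ the defining relation for $U$ can be divided by $j_\sigma$, giving $\lc\lap_{q_0-1,\mathrm{up}}^K\rc^\dagger D_\sigma^K = U/j_\sigma$; substituting this into the definition \Cref{eq:effective resistance simplicial} of $R_\sigma^K$ yields
\[
R_\sigma^K = \lc D_\sigma^K\rc^\T\lc\lap_{q_0-1,\mathrm{up}}^K\rc^\dagger D_\sigma^K = \frac{\lc D_\sigma^K\rc^\T U}{j_\sigma}.
\]
The only step that is not pure bookkeeping is the image computation together with the use of the current-generator hypothesis $\partial_{q_0}^K\sigma\in\mathrm{im}(\partial_{q_0}^K)$: this is precisely what ensures that the orthogonal projection onto $\mathrm{im}\lc\lap_{q_0-1,\mathrm{up}}^K\rc$ fixes $D_\sigma^K$; the remainder is the routine algebra of $A A^\dagger$.
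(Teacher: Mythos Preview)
Your proof is correct and follows essentially the same strategy as the paper's: both reduce the first identity to the fact that $D_\sigma^K$ is fixed by the projector $\lap_{q_0-1,\mathrm{up}}^K\lc\lap_{q_0-1,\mathrm{up}}^K\rc^\dagger$, using the current-generator hypothesis $\partial_\sigma\in\mathrm{im}(\partial_{q_0}^K)$; the second assertion is then immediate algebra in both. The only cosmetic difference is that the paper phrases the key step as $\partial_\sigma\perp\ker\lc\Delta_{q_0-1,\mathrm{up}}^K\rc$ (via an inner-product computation with $(\partial_{q_0}^K)^*$) whereas you phrase it as $D_\sigma^K\in\mathrm{im}\lc\lap_{q_0-1,\mathrm{up}}^K\rc$ (via $\mathrm{im}(\lap)=\mathrm{im}(B_{q_0}^K)$); since the matrix is symmetric these are the same statement. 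One small quibble: your reference to $L$ is extraneous here, as the lemma concerns $K$ alone---but the conclusion $w_{q_0-1}^K\equiv 1$ you need is correct in context.
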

\begin{proof}
We need the following property of current generators.
\begin{claim}\label{clm:current generator perp}
If $\sigma$ is a current generator, then $\partial_\sigma \perp\ker\lc \Delta_{q_0-1,\mathrm{up}}^K\rc$.
\end{claim}
\begin{proof}[Proof of \Cref{clm:current generator perp}]
Since $\sigma$ is a current generator, there exists a chain $c_\sigma\in C_{q_0}^K$ such that $\partial_\sigma =\partial_{q_0}^K c_\sigma$. It is obvious that $\ker\lc\Delta_{q_0-1,\mathrm{up}}^K\rc=\ker\lc \lc\partial_{q_0}^K\rc^*\rc$. Then, for any $c\in\ker(\Delta_{q_0-1,\mathrm{up}}^K)$, we have that
\begin{align*}
    \left\langle c,\partial_\sigma \right\rangle_{w_{q_0-1}^K}=\left\langle c,\partial_{q_0}^Kc_\sigma\right\rangle_{w_{q_0-1}^K}=\left\langle \lc \partial_{q_0}^K\rc^*c,c_\sigma\right\rangle_{w_{q_0}^K}=0.
\end{align*}
This implies that $\partial_\sigma \perp\ker\left(\Delta_{q_0-1,\mathrm{up}}^K\right)$.
\end{proof}
Then, $\lc \lap_{q_0-1,\mathrm{up}}^K\rc^\dagger \lap_{q_0-1,\mathrm{up}}^K D_\sigma^K=D_\sigma^K $. Therefore,
\begin{align*}
    \lap_{q_0-1,\mathrm{up}}^K U= j_\sigma \lap_{q_0-1,\mathrm{up}}^K\lc\lap_{q_0-1,\mathrm{up}}^K\rc^\dagger D_\sigma^K=j_\sigma\lc \lap_{q_0-1,\mathrm{up}}^K\rc^\dagger\lap_{q_0-1,\mathrm{up}}^K D_\sigma^K=j_\sigma D_\sigma^K.
\end{align*}
If $j_\sigma\neq 0$, then
\begin{align*}
    \mathfrak{R}_\sigma^K= \lc D_{\sigma}^K\rc^\mathrm{T}\lc\lap_{q_0-1,\mathrm{up}}^K\rc^{\dagger}D_{\sigma}^K=\frac{\lc D_{\sigma}^K\rc^\mathrm{T}U}{j_\sigma}.
\end{align*}
\end{proof}

\subparagraph{Relation with the notion of effective resistance defined in
\cite{kook2018simplicial}} Let $q_0$ be a positive integer. Given a $q_0$-dim simplicial network $K$, a version of effective resistance $\tilde{\mathfrak{R}}_\sigma^K$ on a current generator $\sigma$ is defined in \cite{kook2018simplicial} differently from \Cref{eq:effective resistance simplicial}. In particular, $\tilde{\mathfrak{R}}_\sigma^K$ is characterized in \cite{kook2018simplicial} via the following formula:
\begin{theorem}[{\cite[Theorem 4.2]{kook2018simplicial}}]
Let $K$ be a $q_0$-dim simplicial network and let $\sigma$ be a $q_0$-dim current generator. If the $(q_0-1)$-th reduced homology\footnote{The $q$-th reduced homology of a simplicial complex $K$ is the $q$-th homology group of the extended chain complex $\cdots\xrightarrow{\partial_{q+1}^K} C_q^K\xrightarrow{\partial_{q}^K}\cdots\xrightarrow{\partial_{1}^K}C_0^K\xrightarrow{\tilde{\partial}_{0}^K}\R$, where $\tilde{\partial}_0^K$ is a linear map sending each vertex $v\in S_0^K$ to $1\in\R$.} $\tilde{H}_{q_0-1}(K)=0$, then $\lap_{q_0-1}^K$ is non-singular and 
\[\tilde{\mathfrak{R}}_\sigma^K=\lc D_{\sigma}^K\rc^\mathrm{T}\lc\lap_{q_0-1}^K\rc^{-1}D_{\sigma}^K.\]
\end{theorem}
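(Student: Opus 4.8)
The plan is to split the statement into two parts — nonsingularity of $\lap_{q_0-1}^K$ and the quadratic-form identity — and to handle the second by comparing the definition of $\tilde R_\sigma^K$ in \cite{kook2018simplicial} with \Cref{eq:effective resistance simplicial} using the Hodge decomposition. Throughout, $\lap_{q_0-1}^K$ is taken relative to the reduced chain complex of the footnote: for $q_0\ge 2$ it is the ordinary combinatorial Laplacian $\Delta_{q_0-1}^K$, while for $q_0=1$ it is the grounded Laplacian $\partial_1^K(\partial_1^K)^\ast+(\tilde\partial_0^K)^\ast\tilde\partial_0^K$.

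First I would prove nonsingularity. For $q_0\ge 2$ one has $\tilde H_{q_0-1}(K)=H_{q_0-1}(K)$, so \Cref{thm:ker lap = homology} gives $\mathrm{nullity}(\lap_{q_0-1}^K)=\beta_{q_0-1}^K=\rank(\tilde H_{q_0-1}(K))=0$. For $q_0=1$ the hypothesis $\tilde H_0(K)=0$ says $K$ is connected, and a short direct computation (the reduced analogue of \Cref{thm:ker lap = homology}) shows $\ker(\lap_0^K)=\ker((\partial_1^K)^\ast)\cap\ker(\tilde\partial_0^K)$, which is trivial for a connected graph. Hence $\lap_{q_0-1}^K$ is invertible and its Moore--Penrose inverse equals its ordinary inverse.

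Next I would identify the quadratic forms. Since $\sigma$ is a $q_0$-dim current generator, $\partial_\sigma=\partial_{q_0}^K\sigma\in\mathrm{im}(\partial_{q_0}^K)$. The up- and down-Laplacians on $C_{q_0-1}^K$ commute and have zero product (both compositions contain $\partial_{q_0-1}^K\partial_{q_0}^K=0$), and the same holds with the grounding term replacing the down-Laplacian when $q_0=1$, because the all-ones vector lies in $\ker((\partial_1^K)^\ast)$. Therefore $\lap_{q_0-1}^K$ is the orthogonal sum of $\lap_{q_0-1,\mathrm{up}}^K$ and the complementary term, so $(\lap_{q_0-1}^K)^\dagger$ is the sum of their pseudoinverses; and since $D_\sigma^K$ represents an element of $\mathrm{im}(\partial_{q_0}^K)$, which is orthogonal to the image of the complementary term, that term contributes nothing when applied to $D_\sigma^K$. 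Combining this with the nonsingularity above yields
\[(D_\sigma^K)^{\T}(\lap_{q_0-1}^K)^{-1}D_\sigma^K=(D_\sigma^K)^{\T}(\lap_{q_0-1}^K)^{\dagger}D_\sigma^K=(D_\sigma^K)^{\T}(\lap_{q_0-1,\mathrm{up}}^K)^{\dagger}D_\sigma^K=R_\sigma^K.\]
It then remains to reconcile $R_\sigma^K$ with $\tilde R_\sigma^K$: expressed through currents and potentials, $\tilde R_\sigma^K$ is the ratio $(D_\sigma^K)^{\T}U/j_\sigma$ where $j_\sigma D_\sigma^K=\lap_{q_0-1}^K U$ records the potential response to a unit current injected along $\partial_\sigma$ (cf.\ \Cref{lm:effective resistance current balance}); invertibility of $\lap_{q_0-1}^K$ forces $U=j_\sigma(\lap_{q_0-1}^K)^{-1}D_\sigma^K$, which gives exactly the claimed formula.

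The main obstacle is this last reconciliation: in \cite{kook2018simplicial} the quantity $\tilde R_\sigma^K$ is introduced combinatorially, through weighted high-dimensional spanning acycles and a simplicial matrix--tree theorem, so one must first show that that combinatorial quantity agrees with the potential-response expression above — equivalently, that the relevant ratio of weighted spanning-acycle counts is the corresponding cofactor/minor of $\lap_{q_0-1}^K$. Once that bridge is in place, the rest is bookkeeping with the Hodge decomposition together with \Cref{thm:ker lap = homology} and \Cref{lm:effective resistance current balance}.
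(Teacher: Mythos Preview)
The paper does not prove this statement: it is quoted verbatim from \cite[Theorem~4.2]{kook2018simplicial} as an external result characterizing the Kook--Lee effective resistance $\tilde R_\sigma^K$, and the paper offers no argument for it. What the paper \emph{does} prove is the neighbouring \Cref{thm:two effective resistance same}, which says that the paper's own $R_\sigma^K$ from \Cref{eq:effective resistance simplicial} satisfies $R_\sigma^K=(D_\sigma^K)^{\T}(\lap_{q_0-1}^K)^{\dagger}D_\sigma^K$ and hence coincides with $\tilde R_\sigma^K$ when $\tilde H_{q_0-1}(K)=0$.

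Your write-up largely reproduces the paper's proof of \Cref{thm:two effective resistance same}: the splitting $(\lap_{q_0-1}^K)^\dagger=(\lap_{q_0-1,\mathrm{up}}^K)^\dagger+(\lap_{q_0-1,\mathrm{down}}^K)^\dagger$ is \Cref{lm:invers L = Lu+Ld}, and the observation that $D_\sigma^K$ is killed by the down piece because $\partial_\sigma\in\mathrm{im}(\partial_{q_0}^K)$ is exactly the computation the paper carries out there. So that part of your proposal matches the paper's argument for the related theorem, not the cited one.

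The genuine content of the cited theorem --- that the combinatorially defined $\tilde R_\sigma^K$ equals the quadratic form $(D_\sigma^K)^{\T}(\lap_{q_0-1}^K)^{-1}D_\sigma^K$ --- is precisely the step you flag as the ``main obstacle'' and do not carry out. That step lives entirely in \cite{kook2018simplicial} (high-dimensional matrix--tree identities relating spanning-acycle weights to cofactors of the Laplacian), and neither the paper nor your proposal supplies it. In short: there is no paper proof to compare against, and the part of the statement that is specific to Kook--Lee remains unproved in your proposal.
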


It turns out that $\mathfrak{R}_\sigma^K=\tilde{\mathfrak{R}}_\sigma^K$ when $\tilde{H}_{q_0-1}(K)= 0$:
\begin{theorem}\label{thm:two effective resistance same}
Let $K$ be a $q_0$-dim simplicial network and let $\sigma$ be a $q_0$-dim current generator. Then,
\begin{equation}\label{eq:two definition of resistance}
    {R}_\sigma^K=\lc D_{\sigma}^K\rc^\mathrm{T}\lc\lap_{q_0-1,\mathrm{up}}^K\rc^{\dagger}D_{\sigma}^K=\lc D_{\sigma}^K\rc^\mathrm{T}\lc\lap_{q_0-1}^K\rc^{\dagger}D_{\sigma}^K.
\end{equation}
In particular, when $\tilde{H}_{q_0-1}(K)= 0$, we have that $\mathfrak{R}_\sigma^K=\tilde{\mathfrak{R}}_\sigma^K$.
\end{theorem}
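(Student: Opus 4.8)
The first equality in \Cref{eq:two definition of resistance} is just the definition \Cref{eq:effective resistance simplicial} of $R_\sigma^K$, so the entire content is the identity $(D_\sigma^K)^\T(\lap_{q_0-1,\mathrm{up}}^K)^\dagger D_\sigma^K=(D_\sigma^K)^\T(\lap_{q_0-1}^K)^\dagger D_\sigma^K$. The plan is to prove the stronger statement that the two vectors $(\lap_{q_0-1}^K)^\dagger D_\sigma^K$ and $(\lap_{q_0-1,\mathrm{up}}^K)^\dagger D_\sigma^K$ are already \emph{equal}, by reducing to one elementary fact about Moore--Penrose pseudoinverses of sums of positive semi-definite matrices with orthogonal images, and by using the current-generator orthogonality already recorded in \Cref{clm:current generator perp}.

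\textbf{Setup.} Since $K$ is a $q_0$-dim simplicial network we have $w_q^K\equiv 1$ for every $q\neq q_0$; in particular $w_{q_0-1}^K\equiv 1$, so by \Cref{rmk:w=1 weighted laplacian} the matrix $\lap_{q_0-1}^K$ is symmetric positive semi-definite and splits as $\lap_{q_0-1}^K=M_{\mathrm{up}}+M_{\mathrm{down}}$, where $M_{\mathrm{up}}\coloneqq\lap_{q_0-1,\mathrm{up}}^K=B_{q_0}^KW_{q_0}^K(B_{q_0}^K)^\T$ and $M_{\mathrm{down}}\coloneqq\lap_{q_0-1,\mathrm{down}}^K=(B_{q_0-1}^K)^\T B_{q_0-1}^K$ are both symmetric positive semi-definite. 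Because $W_{q_0}^K$ is a positive definite diagonal matrix, $\mathrm{im}(M_{\mathrm{up}})=(\ker (B_{q_0}^K)^\T)^\perp=\mathrm{im}(B_{q_0}^K)$, whereas $\mathrm{im}(M_{\mathrm{down}})=\mathrm{im}((B_{q_0-1}^K)^\T)=(\ker B_{q_0-1}^K)^\perp$. Since $\partial_{q_0-1}^K\circ\partial_{q_0}^K=0$, i.e. $B_{q_0-1}^KB_{q_0}^K=0$, we have $\mathrm{im}(B_{q_0}^K)\subseteq\ker B_{q_0-1}^K$, and therefore $\mathrm{im}(M_{\mathrm{up}})\perp\mathrm{im}(M_{\mathrm{down}})$.

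\textbf{Pseudoinverse lemma.} Next I would record and prove the following: if $M=M_1+M_2$ with $M_1,M_2$ symmetric positive semi-definite and $\mathrm{im}(M_1)\perp\mathrm{im}(M_2)$, then $M^\dagger x=M_1^\dagger x$ for every $x\in\mathrm{im}(M_1)$. Indeed, positive semi-definiteness gives $\ker M_i=\mathrm{im}(M_i)^\perp$, so $M_2$ vanishes on $\mathrm{im}(M_1)$ and $M_1$ vanishes on $\mathrm{im}(M_2)$; hence $\mathrm{im}(M)=\mathrm{im}(M_1)\oplus\mathrm{im}(M_2)$ orthogonally, $M$ preserves the subspace $\mathrm{im}(M_1)$, and $M$ restricted there equals $M_1$ restricted there, both restrictions being bijections of $\mathrm{im}(M_1)$ onto itself. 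Since $M^\dagger x$ is the unique preimage of $x$ under $M$ lying in $\mathrm{im}(M)$, and for $x\in\mathrm{im}(M_1)$ that preimage must lie in $\mathrm{im}(M_1)$ (write $y=y_1+y_2$ accordingly and compare orthogonal components of $My=x$), it coincides with $M_1^\dagger x$.

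\textbf{Conclusion.} Because $\sigma$ is a $q_0$-dim current generator, $\partial_\sigma=\partial_{q_0}^K c_\sigma\in\mathrm{im}(\partial_{q_0}^K)$, i.e. $D_\sigma^K\in\mathrm{im}(B_{q_0}^K)=\mathrm{im}(M_{\mathrm{up}})$ (equivalently $D_\sigma^K\perp\ker(\lap_{q_0-1,\mathrm{up}}^K)$, which is \Cref{clm:current generator perp}). Applying the lemma with $M=\lap_{q_0-1}^K$, $M_1=M_{\mathrm{up}}$, $M_2=M_{\mathrm{down}}$ and $x=D_\sigma^K$ gives $(\lap_{q_0-1}^K)^\dagger D_\sigma^K=(\lap_{q_0-1,\mathrm{up}}^K)^\dagger D_\sigma^K$, and left-multiplying by $(D_\sigma^K)^\T$ yields \Cref{eq:two definition of resistance}. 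Finally, if $\tilde H_{q_0-1}(K)=0$ then by \cite[Theorem 4.2]{kook2018simplicial} the matrix $\lap_{q_0-1}^K$ is invertible, so $(\lap_{q_0-1}^K)^\dagger=(\lap_{q_0-1}^K)^{-1}$, and combining \Cref{eq:two definition of resistance} with the formula $\tilde R_\sigma^K=(D_\sigma^K)^\T(\lap_{q_0-1}^K)^{-1}D_\sigma^K$ from that theorem gives $R_\sigma^K=\tilde R_\sigma^K$. I expect the only nontrivial step to be the pseudoinverse lemma; the remaining ingredients (the Hodge-type orthogonality and the network normalization $w_{q_0-1}^K\equiv 1$ that keeps everything symmetric) are already in place.
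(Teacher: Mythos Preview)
Your proof is correct and essentially parallels the paper's: both exploit the orthogonality $\mathrm{im}(\lap_{q_0-1,\mathrm{up}}^K)\perp\mathrm{im}(\lap_{q_0-1,\mathrm{down}}^K)$ coming from $\partial_{q_0-1}^K\circ\partial_{q_0}^K=0$ to separate the pseudoinverse. The paper packages this as the matrix identity $(\lap_{q_0-1}^K)^\dagger=(\lap_{q_0-1,\mathrm{up}}^K)^\dagger+(\lap_{q_0-1,\mathrm{down}}^K)^\dagger$ (\Cref{lm:invers L = Lu+Ld}, proved via eigendecomposition) and then checks $(D_\sigma^K)^\T(\lap_{q_0-1,\mathrm{down}}^K)^\dagger D_\sigma^K=0$, whereas you prove directly the vector identity $M^\dagger x=M_1^\dagger x$ for $x\in\mathrm{im}(M_1)$ without eigendecomposition --- the same content in slightly different packaging.
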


The proof of this theorem is based on the following result about the relation between the generalized inverse of Laplacians and the generalized inverses of up and down Laplacians.

\begin{lemma}\label{lm:invers L = Lu+Ld}
Let $K$ be a simplicial complex with a weight function $w^K$. If $w_q^L\equiv 1$ for a given positive $q\in\mathbb N$,  then
\[\lc\lap_{q}^K\rc^\dagger=\lc\lap_{q,\mathrm{up}}^K\rc^{\dagger}+\lc\lap_{q,\mathrm{down}}^K\rc^{\dagger}. \]
\end{lemma}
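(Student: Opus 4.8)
\textbf{Proof plan for Lemma \ref{lm:invers L = Lu+Ld}.}

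The plan is to exploit the Hodge decomposition of the chain group $C_q^K$ together with the fact that, when $w_q^K\equiv 1$, the up- and down-Laplacians are symmetric positive semi-definite operators that commute and whose images are mutually orthogonal. First I would recall the combinatorial Hodge decomposition $C_q^K = \mathrm{im}\,\partial_{q+1}^K \oplus \mathrm{im}\,(\partial_q^K)^* \oplus \ker\Delta_q^K$, where the three summands are pairwise orthogonal with respect to the chosen inner product. Observe that $\mathrm{im}\,\Delta_{q,\mathrm{up}}^K = \mathrm{im}\,\partial_{q+1}^K$ and $\mathrm{im}\,\Delta_{q,\mathrm{down}}^K = \mathrm{im}\,(\partial_q^K)^*$; moreover $\Delta_{q,\mathrm{up}}^K$ vanishes on $\mathrm{im}\,(\partial_q^K)^*\oplus\ker\Delta_q^K$ and $\Delta_{q,\mathrm{down}}^K$ vanishes on $\mathrm{im}\,\partial_{q+1}^K\oplus\ker\Delta_q^K$, because $\partial_q^K\partial_{q+1}^K = 0$ forces $\Delta_{q,\mathrm{up}}^K\Delta_{q,\mathrm{down}}^K = \Delta_{q,\mathrm{down}}^K\Delta_{q,\mathrm{up}}^K = 0$ (here the symmetry of both operators, which requires $w_q^K\equiv 1$, is used so that the matrix $\lap_q^K$ is genuinely $\lap_{q,\mathrm{up}}^K + \lap_{q,\mathrm{down}}^K$ as a sum of symmetric PSD matrices as in \Cref{rmk:w=1 weighted laplacian}).

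Next I would diagonalize simultaneously. Choose an orthonormal eigenbasis of $C_q^K$ adapted to the Hodge decomposition: eigenvectors in $\mathrm{im}\,\partial_{q+1}^K$ (with $\Delta_{q,\mathrm{down}}^K$ acting as $0$), eigenvectors in $\mathrm{im}\,(\partial_q^K)^*$ (with $\Delta_{q,\mathrm{up}}^K$ acting as $0$), and eigenvectors in $\ker\Delta_q^K$ (both acting as $0$). On each block, $\lap_q^K = \lap_{q,\mathrm{up}}^K + \lap_{q,\mathrm{down}}^K$ reduces to exactly one of the two summands (or to $0$ on the kernel), so $\lap_q^K$ is block-diagonal with positive eigenvalues coming from $\lap_{q,\mathrm{up}}^K$ on the first block and from $\lap_{q,\mathrm{down}}^K$ on the second. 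The Moore--Penrose pseudoinverse then acts blockwise by inverting the nonzero eigenvalues and sending the kernel to $0$; hence $(\lap_q^K)^\dagger$ acts as $(\lap_{q,\mathrm{up}}^K)^\dagger$ on $\mathrm{im}\,\partial_{q+1}^K$, as $(\lap_{q,\mathrm{down}}^K)^\dagger$ on $\mathrm{im}\,(\partial_q^K)^*$, and as $0$ on $\ker\Delta_q^K$. Since $(\lap_{q,\mathrm{up}}^K)^\dagger$ already vanishes on $\mathrm{im}\,(\partial_q^K)^*\oplus\ker\Delta_q^K$ and $(\lap_{q,\mathrm{down}}^K)^\dagger$ vanishes on $\mathrm{im}\,\partial_{q+1}^K\oplus\ker\Delta_q^K$, adding the two pseudoinverses reproduces exactly the blockwise action of $(\lap_q^K)^\dagger$, which gives the claimed identity.

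The step I expect to require the most care is verifying that $\mathrm{im}\,\Delta_{q,\mathrm{up}}^K$ and $\mathrm{im}\,\Delta_{q,\mathrm{down}}^K$ are orthogonal and that each summand lies in the kernel of the other operator --- i.e. that the Hodge decomposition is genuinely a decomposition into joint eigenspaces. This is where the hypothesis $w_q^K\equiv 1$ is essential: it guarantees that $\lap_{q,\mathrm{up}}^K$ and $\lap_{q,\mathrm{down}}^K$ are symmetric (not merely similar to symmetric matrices), so that ``image'' and ``orthogonal complement of kernel'' coincide and the pseudoinverse genuinely decomposes along the orthogonal splitting. Once this structural fact is in hand, the pseudoinverse identity is a routine consequence of the blockwise spectral calculus, and I would present it concisely rather than expanding the eigenvalue bookkeeping.
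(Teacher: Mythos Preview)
Your proposal is correct and follows essentially the same approach as the paper: both arguments use that when $w_q^K\equiv 1$ the matrices $\lap_{q,\mathrm{up}}^K$ and $\lap_{q,\mathrm{down}}^K$ are symmetric PSD with $\mathrm{im}\,\lap_{q,\mathrm{up}}^K\subseteq\ker\lap_{q,\mathrm{down}}^K$ and vice versa, then take eigen-decompositions supported on these orthogonal subspaces and invert blockwise. The only cosmetic difference is that you phrase the orthogonal splitting via the Hodge decomposition whereas the paper cites \cite[Theorem 2.2]{horak2013spectra} directly for the image--kernel containments.
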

\begin{proof}
By \Cref{rmk:w=1 weighted laplacian} we have that $\lap_{q}^K$, $\lap_{q,\mathrm{up}}^K$ and $\lap_{q,\mathrm{down}}^K$ are symmetric positive semi-definite matrices. Then, consider the eigen-decompositions $\lap_{q,\mathrm{up}}^K=\sum_{i}\lambda_i\phi_i\phi_i^\mathrm{T}$ and $\lap_{q,\mathrm{down}}^K=\sum_{j}\mu_j\psi_j\psi_j^\mathrm{T}$ where $\lambda_i,\mu_j\neq 0$. Since $\mathrm{im}\lc\lap_{q,\mathrm{up}}^K\rc\subseteq\ker\lc\lap_{q,\mathrm{down}}^K\rc$ and $\mathrm{im}\lc\lap_{q,\mathrm{down}}^K\rc\subseteq\ker\lc\lap_{q,\mathrm{up}}^K\rc$ (see \cite[Theorem 2.2]{horak2013spectra}), we then have the following eigen-decomposition of $\lap_{q}^K$:
\[\lap_{q}^K=\sum_i \lambda_i\phi_i\phi_i^\mathrm{T}+\sum_{j}\mu_j\psi_j\psi_j^\mathrm{T}. \]
Therefore, 
\[\lc\lap_{q}^K\rc^\dagger=\sum_i \lambda_i^{-1}\phi_i\phi_i^\mathrm{T}+\sum_{j}\mu_j^{-1}\psi_j\psi_j^\mathrm{T}=\left(\lap_{q,\mathrm{up}}^K\right)^{\dagger}+\left(\lap_{q,\mathrm{down}}^K\right)^{\dagger}. \]
\end{proof}

\begin{proof}[Proof of \Cref{thm:two effective resistance same}]
When $q_0=1$, $\lap_{q_0-1}^K=\lap_0^K=\lap_{0,\mathrm{up}}^K=\lap_{q_0-1,\mathrm{up}}^K$. Then, \Cref{eq:two definition of resistance} holds trivially.

Now, we assume that $q_0>1$. Since $w_{q_0-1}^K\equiv 1$, by \Cref{lm:invers L = Lu+Ld}, we only need to show that 
\[\lc D_{\sigma}^K\rc^\T (\lap_{q_0-1,\mathrm{down}}^K)^{\dagger}D_{\sigma}^K =0.\] 
Since $\sigma$ is a current generator, there exists a chain $c_\sigma\in C_{q_0}(K)$ such that $\partial_\sigma  =\partial_{q_0}c_\sigma$. Consider the eigen-decomposition $\lap_{q_0-1,\mathrm{down}}^K=\sum_{j}\mu_j\psi_j\psi_j^\mathrm{T}$ where $\mu_j\neq 0$. Each $\psi_j\in \R^{n_{q_0-1}^K}$ represents a chain in $C_{q_0-1}^K$, which we still denote by $\psi_j$. Then,
\begin{align*}
    \psi_j^\mathrm{T}D_{\sigma}^K &=\left\langle \mu_j^{-1}\Delta_{q_0-1}^K\psi_j,\partial_{q_0}^Kc_\sigma\right\rangle_{w_{q_0-1}^K}=\left\langle \mu_j^{-1}\lc\partial_{q_0-1}^K\rc^*\partial_{q_0-1}^K\psi_j,\partial_{q_0}^Kc_\sigma\right\rangle_{w_{q_0-1}^K}\\
    &=\left\langle \mu_j^{-1}\partial_{q_0-1}^K\psi_j,\partial_{q_0-1}^K\partial_{q_0}^Kc_\sigma\right\rangle_{w_{q_0-2}^K}=\left\langle \mu_j^{-1}\partial_{q_0-1}^K\psi_j,0\right\rangle_{w_{q_0-2}^K}=0.
\end{align*}
Therefore, $\lc D_{\sigma}^K\rc^\T \left(\lap_{q_0-1,\mathrm{down}}^K\right)^{\dagger}D_{\sigma}^K =0$.
\end{proof}

\paragraph*{Relationship between the up persistent Laplacian and the effective resistance}
Let $K\hookrightarrow L$ be a simplicial pair. For simplicity of presentation, we assume for each $q\in\mathbb N$ an ordering $\bar{S}_q^L=\{[\sigma_i]\}_{i=1}^{n_q^L}$ on $\Bar{S}_q^L$ such that $\bar{S}_q^K=\{[\sigma_i]\}_{i=1}^{n_q^K}$. The main goal is to prove the following result stating that the up persistent Laplacian preserves the effective resistances for simplicial networks.
\begin{theorem}\label{thm:high-dim-effective-persistent-preserve}
Let $K\hookrightarrow L$ be a simplicial pair. Let $q_0$ be a positive integer and suppose that $L$ is a $q_0$-dim simplicial network. Let $\sigma$ be a $q_0$-dim current generator in $L$. If $\partial_{\sigma}=\partial_{q_0}^L\sigma\in C_{q_0-1}^K$, then
\[\mathfrak{R}_{\sigma}^L=\lc D_{\sigma}^L\rc^{\T}\lc\lap_{q_0-1,\mathrm{up}}^L\rc^\dagger D_{\sigma}^L=\lc D_{\sigma}^K\rc^{\T}\lc\lap_{q_0-1,\mathrm{up}}^{K,L}\rc^\dagger D_{\sigma}^K,\]
where $D_\sigma^L\in \R^{n_{q_0-1}^L}$ and $D_{\sigma}^K\in \R^{n_{q_0-1}^K}$ denote the vector representations of $\partial_{\sigma}$ in $C_{q_0-1}^L$ and $C_{q_0-1}^K$, respectively.
\end{theorem}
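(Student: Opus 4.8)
The first equality in the statement is just the definition \eqref{eq:effective resistance simplicial} of $R_\sigma^L$ (with $K$ replaced by $L$ throughout), so the entire content lies in the second equality. The plan is to reduce that equality to a purely linear-algebraic fact about Schur complements of positive semi-definite matrices, with Theorem~\ref{thm:weighted-persis-Laplacian-schur-formula} serving as the bridge between the persistent up-Laplacian and a Schur complement of the up-Laplacian of $L$.

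First I would record the ambient facts. Since $L$ is a $q_0$-dimensional simplicial network and $q_0-1\neq q_0$, we have $w_{q_0-1}^L\equiv 1$, so $M\coloneqq\lap_{q_0-1,\mathrm{up}}^L=B_{q_0}^LW_{q_0}^L\big(B_{q_0}^L\big)^\T$ is symmetric and positive semi-definite; in particular $\mathrm{im}(M)=\ker(M)^\perp$ with respect to the standard inner product on $C_{q_0-1}^L$. By Theorem~\ref{thm:weighted-persis-Laplacian-schur-formula}, $\lap_{q_0-1,\mathrm{up}}^{K,L}=M/M\big(I_K^L,I_K^L\big)$, and under our standing ordering convention $I_K^L=[n_{q_0-1}^L]\setminus[n_{q_0-1}^K]$ indexes the trailing block, so $D\coloneqq M\big(I_K^L,I_K^L\big)$ is the bottom-right block of $M$ and is proper in $M$ by Example~\ref{ex:psd-proper}. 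Next, because $\sigma$ is a current generator in $L$, Claim~\ref{clm:current generator perp} yields $\partial_\sigma\perp\ker\big(\Delta_{q_0-1,\mathrm{up}}^L\big)$, i.e.\ $D_\sigma^L\in\mathrm{im}(M)$; and the hypothesis $\partial_\sigma\in C_{q_0-1}^K$ together with the ordering convention means that $D_\sigma^L$ has its $I_K^L$-coordinates equal to $0$ and its remaining coordinates equal to $D_\sigma^K$.

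The core of the argument is the following lemma, which I would state and prove directly: if $M=\begin{pmatrix}A & B\\ B^\T & D\end{pmatrix}$ is symmetric positive semi-definite with $D$ its (hence proper) bottom-right block, and $b=\begin{pmatrix}u\\0\end{pmatrix}\in\mathrm{im}(M)$, then $b^\T M^\dagger b=u^\T(M/D)^\dagger u$. To prove it, choose any $x=\begin{pmatrix}x_1\\x_2\end{pmatrix}$ with $Mx=b$; since $b\in\mathrm{im}(M)=\ker(M)^\perp$, the quantity $b^\T x$ does not depend on the choice of $x$, equals $b^\T M^\dagger b$ (take $x=M^\dagger b$), and also equals $u^\T x_1$. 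The lower block of $Mx=b$ reads $B^\T x_1+Dx_2=0$, so $Dx_2=-B^\T x_1\in\mathrm{im}(D)$ and thus $x_2=-D^\dagger B^\T x_1+z$ for some $z\in\ker(D)$; properness of $D$ in $M$ forces $Bz=0$, and substituting into the upper block $Ax_1+Bx_2=u$ gives $\big(A-BD^\dagger B^\T\big)x_1=(M/D)x_1=u$. Hence $u\in\mathrm{im}(M/D)$, and since $M/D$ is symmetric the same identity applied to $M/D$ gives $u^\T(M/D)^\dagger u=u^\T x_1=b^\T M^\dagger b$. Applying this lemma with $b=D_\sigma^L$, $u=D_\sigma^K$ and $M/D=\lap_{q_0-1,\mathrm{up}}^{K,L}$ completes the proof.

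I expect the main obstacle to be the lemma itself, specifically the bookkeeping with the Moore--Penrose inverse: one must verify that $b^\T M^\dagger b$ is computed by \emph{any} solution of $Mx=b$ once $b\in\mathrm{im}(M)$, and that the $\ker(D)$-ambiguity when solving the lower block for $x_2$ is annihilated by $B$ --- which is exactly where properness of $D$ in $M$ is used. Everything else is routine. One could alternatively run the argument through Lemma~\ref{lm:effective resistance current balance}, solving the current-balance equation $D_\sigma^L=MU$ and reading off its block structure, but the self-contained matrix lemma above is cleaner and isolates the role of properness.
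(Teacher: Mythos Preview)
Your proposal is correct and follows essentially the same approach as the paper: both solve $MU=D_\sigma^L$ with $M=\lap_{q_0-1,\mathrm{up}}^L$, block-reduce via properness to obtain $(M/D)U_K=D_\sigma^K$ (the paper routes this through \Cref{lm:effective resistance current balance} and \Cref{eq:kron-linear-equation}, you do it directly), and then use orthogonality to the kernels to pass to the pseudoinverse quadratic forms. Your packaging as a standalone matrix lemma is slightly cleaner in that $u\in\mathrm{im}(M/D)$ drops out of $(M/D)x_1=u$ for free, whereas the paper argues $\partial_\sigma\perp\ker\big(\Delta_{q_0-1,\mathrm{up}}^{K,L}\big)$ separately by repeating the proof of \Cref{clm:current generator perp}.
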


To prove the theorem, we need the following auxiliary result.

\begin{lemma}
Let $J,U\in \R^{n_{q_0-1}^L}$ be two vectors satisfying \Cref{eq:current-balance equation} for the simplicial network $L$. Let $J_K\coloneqq J\lc[n_{q_0-1}^K]\rc, J_{L\backslash K}\coloneqq J\lc I_K^L\rc$ and let $U_K\coloneqq U\lc[n_{q_0-1}^K]\rc, U_{L\backslash K}\coloneqq U\lc I_K^L\rc$, where $I_K^L=[n_{q_0-1}^L]\backslash[n_{q_0-1}^K]$.  Then,
\begin{equation}\label{eq:kron-linear-equation}
    J_K - \lap_{q_0-1,\mathrm{up}}^L\lc [n_{q_0-1}^K],I_{K}^L\rc \lc\lap_{q_0-1,\mathrm{up}}^L\lc I_K^L,I_K^L\rc\rc^\dagger J_{L\backslash K}=\lap_{q_0-1,\mathrm{up}}^{K,L} U_K.
\end{equation}
In particular, if we regard $S_{q_0-1}^L\backslash S_{q_0-1}^K$ (indexed by $I_K^L$) as ``interior nodes'' of the simplicial network $L$,  we let $J_{L\backslash K}=0$ and then the current influxes at ``boundary nodes'' in $S_{q_0-1}^K$ are completely determined by voltage potentials on $S_{q_0-1}^K$ and the up persistent Laplacian:
\begin{equation}\label{eq:I_K=kron U_K}
  J_K= \lap_{q_0-1,\mathrm{up}}^{K,L} U_K.
\end{equation}
\end{lemma}

\begin{proof}
For notationally simplicity, we use abbreviations 
\begin{align*}
    \lap &\coloneqq \lap_{q_0-1,\mathrm{up}}^L,~~ \lap_{KK}\coloneqq \lap\lc[n_{q_0-1}^K],[n_{q_0-1}^K]\rc, ~~ \lap_{KL}\coloneqq\lap\lc[n_{q_0-1}^K],I_K^L\rc, \\
& ~~~~~~~~\lap_{LK} \coloneqq\lap\lc I_K^L,[n_{q_0-1}^K]\rc ~\text{and}~\lap_{LL}\coloneqq\lap\lc I_K^L,I_K^L\rc. 
    \end{align*}
Then,
\[\begin{pmatrix}J_K\\ J_{L\backslash K}\end{pmatrix}=\begin{pmatrix}\lap_{KK} & \lap_{KL}\\\lap_{LK}&\lap_{LL}\end{pmatrix}\begin{pmatrix}U_K\\ U_{L\backslash K}\end{pmatrix}.\]
Therefore, we have that 
\begin{align}
    &J_K=\lap_{KK}U_K+\lap_{KL}U_{L\backslash K}\text{ and}\label{eq:I_K}\\
    &J_{L\backslash K}=\lap_{LK}U_K+\lap_{LL}U_{L\backslash K}.\label{eq:I_L}
\end{align}
Left multiply \Cref{eq:I_L} by $\lap_{KL}\lap_{LL}^\dagger$ and obtain \[\lap_{KL}\lap_{LL}^\dagger J_{L\backslash K}=\lap_{KL}\lap_{LL}^\dagger\lap_{LK}U_K+\lap_{KL}\lap_{LL}^\dagger\lap_{LL}U_{L\backslash K}.\]
By \Cref{lm:psd-proper} we have that $\ker\lc\lap_{LL}\rc\subseteq \ker\lc\lap_{KL}\rc$. This is equivalent to the condition $\lap_{KL}=\lap_{KL}\lap_{LL}^\dagger\lap_{LL}$. Therefore, 
\begin{equation}\label{eq:I_L product}
    \lap_{KL}\lap_{LL}^\dagger J_{L\backslash K}=\lap_{KL}\lap_{LL}^\dagger\lap_{LK}U_K+\lap_{KL}U_{L\backslash K}
\end{equation}
Then, we obtain \Cref{eq:kron-linear-equation} by \Cref{thm:persis-Laplacian-schur-formula} and by subtracting \Cref{eq:I_L product} from \Cref{eq:I_K}.
\end{proof}

\begin{proof}[Proof of \Cref{thm:high-dim-effective-persistent-preserve}]
Let $U\coloneqq \lc \lap_{q_0-1,\mathrm{up}}^L\rc^\dagger D_\sigma^L\in \R^{n_{q_0-1}^L}$. Then, by \Cref{eq:i=du} we have that 
\[D_\sigma^L = \lap_{q_0-1,\mathrm{up}}^{L}U.\]
Hence $J\coloneqq D_\sigma^L$ and $U$ satisfy \Cref{eq:current-balance equation}. Furthermore, note that $D_\sigma^L\lc  [n_{q_0-1}^K]\rc=D_\sigma^K$ and $D_\sigma^L\lc [n_{q_0-1}^L]\backslash[n_{q_0-1}^K]\rc=0$. Then, by \Cref{eq:kron-linear-equation}, we have that
\[D_\sigma^K = \lap_{q_0-1,\mathrm{up}}^{K,L}U_K,\]
where $U_K=U\lc [n_{q_0-1}^K]\rc$.

Note that $\lc \lap_{q_0-1,\mathrm{up}}^{K,L} \rc^\dagger \lap_{q_0-1,\mathrm{up}}^{K,L} = \mathbb{I}_{q_0-1}-\pi_{\ker\lc \lap_{q_0-1,\mathrm{up}}^{K,L}\rc}$, where $\mathbb{I}_{q_0-1}$ is the $(q_0-1)$-dim identity matrix and $\pi_{\ker\lc \lap_{q_0-1,\mathrm{up}}^{K,L}\rc}:C_{q_0-1}^K\rightarrow C_{q_0-1}^K$ is the orthogonal projector such that $\mathrm{im}\lc\pi_{\ker\lc \lap_{q_0-1,\mathrm{up}}^{K,L}\rc}\rc= \ker\lc \lap_{q_0-1,\mathrm{up}}^{K,L}\rc$. Let $I\coloneqq \mathbb{I}_{q_0-1}$ and let $\pi\coloneqq \pi_{\ker\lc \lap_{q_0-1,\mathrm{up}}^{K,L}\rc}$. Then, 
\[\lc \lap_{q_0-1,\mathrm{up}}^{K,L} \rc^\dagger D_\sigma^K = (I-\pi)U_K.\]
Therefore, by \Cref{lm:effective resistance current balance}, we have that
\[\mathfrak{R}_\sigma^L={\lc D_\sigma^L\rc^\T U}={\lc D_\sigma^K\rc^\T U_K}={\lc D_\sigma^K\rc^\T \lc(I-\pi)U_K\rc}=\lc D_\sigma^K\rc^\T\lc\lap_{q_0-1,\mathrm{up}}^{K,L}\rc^\dagger D_\sigma^K,\]
where in the third equality we used the fact $\partial_\sigma\perp \ker\lc \Delta_{q_0-1,\mathrm{up}}^{K,L}\rc$ whose proof is essentially the same as the one for \Cref{clm:current generator perp}.
\end{proof}

\begin{remark}
When $K\hookrightarrow L$ is a weighted graph pair and $L$ is connected, if we let $\sigma=[v,w]$ for distinct vertices $v,w\in V^K$, then \Cref{thm:high-dim-effective-persistent-preserve} reduces to \Cref{thm:effective-persistent-preserve} and \cite[Theorem 3.8]{dorfler2012kron}.
\end{remark}

\paragraph*{Kron reduction for simplicial networks}Inspired by \Cref{thm:high-dim-effective-persistent-preserve}, it is tempting to generalize the Kron reduction of graphs to the case of simplicial networks by defining the up persistent Laplacian $\lap_{q_0-1,\mathrm{up}}^{K,L}$ as the \emph{simplicial Kron-reduced matrix}. However, there is no result analogous to \Cref{prop:persistent graph is a graph} for simplicial networks, namely, in general there exists no well-defined simplicial network with $\lap_{q_0-1,\mathrm{up}}^{K,L}$ being its $(q_0-1)$-th up Laplacian. Before providing a counterexample, we present a necessary condition for a matrix to be the $(q_0-1)$-th up Laplacian of a $q_0$-dim simplicial network.

\begin{proposition}\label{prop:necessary up lap}
Let $K$ be a $q_0$-dim simplicial network. Then, for any $i\in[n_{q_0-1}^K]$, we have that
\[\sum_{j\neq i}\big|\lap_{q_0-1,\mathrm{up}}^{K}(i,j)\big|=q_0\cdot\lap_{q_0-1,\mathrm{up}}^{K}(i,i).\]
\end{proposition}
\begin{proof}
From \Cref{sec:computation of up and down} we have that
\[\lap_{q_0-1,\mathrm{up}}^{K}(i,i)=\sum_{\substack{{\tau_k\in S_{q_0}^K}\\{\sigma_i\text{ is a face of }\tau_k}}}{w_{q_0}^K(\tau_k)}\]
and
\[\left|\lap_{q_0-1,\mathrm{up}}^{K}(i,j)\right|={w_{q_0}^K(\sigma_i\cup\sigma_j)}\cdot\delta_{\sigma_i\cup\sigma_j\in S_{q_0}^K}.\]
Therefore,
\begin{align*}
    \sum_{j\neq i}\big|\lap_{q_0-1,\mathrm{up}}^{K}(i,j)\big|&=\sum_{\substack{{\tau_k\in S_{q_0}^K}\\{\sigma_i\text{ is a face of }\tau_k}}}\sum_{\sigma_i\neq\sigma_j\text{ is a face of }\tau_k}\big|\lap_{q_0-1,\mathrm{up}}^{K}(i,j)\big|\\
    &=\sum_{\substack{{\tau_k\in S_{q_0}^K}\\{\sigma_i\text{ is a face of }\tau_k}}}\sum_{\sigma_i\neq\sigma_j\text{ is a face of }\tau_k}w_{q_0}^K(\tau_k)\\
    &=\sum_{\substack{{\tau_k\in S_{q_0}^K}\\{\sigma_i\text{ is a face of }\tau_k}}}q_0\cdot w_{q_0}^K(\tau_k)\\
    &=q_0\cdot\lap_{q_0-1,\mathrm{up}}^{K}(i,i),
\end{align*}
where in the third equality we used the fact that each $q_0$-dim simplex has $q_0+1$ faces.
\end{proof}

In \Cref{ex:simplicial} we construct an example of \emph{unweighted} simplicial pair $K\hookrightarrow L$ such that $\lap_{q_0-1,\mathrm{up}}^{K,L}$ violates \Cref{prop:necessary up lap}. Note that \Cref{prop:necessary up lap} holds due to rigidity of simplices, i.e., the number of faces of a simplex is determined by the dimension of the simplex. Such restriction can be eliminated if we consider more general cell complexes such as CW complexes. We leave for future work to generalize the Kron reduction in the context of certain cell complex networks.
\begin{figure}
    \centering
    \includegraphics[width=0.25\linewidth]{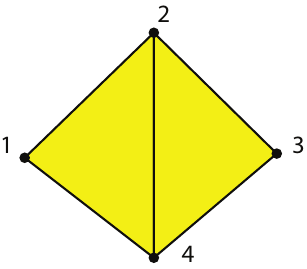}
    \caption{\textbf{Illustration of \cref{ex:simplicial}.} A simplicial complex.}
    \label{fig:simplicial}
\end{figure}
\begin{example}\label{ex:simplicial}
Consider the simplicial complex $L$ shown in \Cref{fig:simplicial} and assume that $w^L\equiv1 $. Let $K$ be the subcomplex $\{[1],[2],[3],[4],[1,2],[2,3],[3,4],[1,4]\}$.
Let $q_0=2$. Given the order $[1,2,3,4]$, it is easy to compute that 
 \[\lap_{1,\mathrm{up}}^{K,L}=\frac{1}{2}\begin{pmatrix}1&1&1&-1\\1&1&1&-1\\1&1&1&-1\\-1&-1&-1&1\end{pmatrix}.\]
 Therefore, for each $i=1,2,3,4$, we have that
 \[\sum_{j\neq i}\big|\lap_{1,\mathrm{up}}^{K}(i,j)\big|=\frac{3}{2}=3\cdot\lap_{1,\mathrm{up}}^{K}(i,i)\neq q_0\cdot\lap_{1,\mathrm{up}}^{K}(i,i)\]
 and thus there exists no well-defined simplicial network with $\lap_{1,\mathrm{up}}^{K,L}$ being its {$1$-st} up Laplacian.
\end{example}

\end{document}